\newtheorem{theorem}{Theorem}[section]
\newtheorem{lemma}[theorem]{Lemma}
\newtheorem{corollary}[theorem]{Corollary}
\newtheorem{definition}[theorem]{Definition} 
\newtheorem{remark}{Remark}[section]
\newtheorem{proposition}[theorem]{Proposition}  
\newtheorem{conjecture}[theorem]{Conjecture}
\newcommand\ex{\mathbb{E}}
\newcounter{fooTH}
\newcounter{fooEQ}
\begin{document}

\title{Group testing and local search:\\ 
is there a computational-statistical gap?
}

\author{
Fotis Iliopoulos
\thanks{This material is based upon work directly supported by the IAS Fund for Math and indirectly supported by the National Science
Foundation Grant No. CCF-1900460. Any opinions, findings and conclusions or recommendations expressed in this material are
those of the author(s) and do not necessarily reflect the views of the National Science Foundation. This work is also supported by the Simons Collaboration on Algorithms and Geometry.} \\ 
Princeton University \\and Institute for Advanced Study \\
{\small fotios@ias.edu}
 \\
 \and 
Ilias Zadik
\thanks{I.Z. is supported by a CDS Moore-Sloan postdoctoral fellowship.} \\ 
New York University\\
{\small zadik@nyu.edu}
}

\maketitle

\begin{abstract}

Group testing is a fundamental problem in statistical inference with many real-world applications, including the need for massive group testing during the ongoing COVID-19 pandemic. 
In this paper we study the task of \emph{approximate recovery}, in which we tolerate having a small number of incorrectly classified items. 

 One of the most well-known, optimal, and easy to implement testing procedures is the \emph{non-adaptive Bernoulli group testing}, where all tests are conducted in parallel, and each  item is chosen to be part of any certain test independently  with some fixed probability. In this setting, there is an observed gap between the  number of tests above which recovery is information theoretically possible, and the number of tests required by the currently best known efficient algorithms to succeed.  In this paper we seek to understand whether this computational-statistical gap can  be closed. Our main contributions are the following:


 \begin{enumerate}

 \item 
 Often times such gaps are explained by a phase transition in the landscape of the solution space of the problem (an \textit{Overlap Gap Property (OGP)} phase transition). We provide first moment evidence that, perhaps  surprisingly, such a phase transition does \emph{not} take place throughout  the regime for which recovery is information theoretically possible. This fact suggests that the model is in fact amenable to local search algorithms.

 \item We prove the complete absence of ``bad" local minima for a part of the ``hard" regime, a fact which  implies  an improvement over known theoretical results on the performance of efficient algorithms for approximate recovery without false-negatives.

 \item Finally, motivated by the evidence for the absence for the OGP, we present  extensive simulations that strongly suggest  that a \emph{very simple} local algorithm known as \emph{Glauber Dynamics}  does indeed succeed, and can be used to efficiently implement the well-known (theoretically optimal) Smallest Satisfying Set (SSS) estimator.  Given that practical algorithms for this task utilize Branch and Bound  and Linear Programming relaxation techniques, our finding could potentially be of practical interest.

 \end{enumerate}

\end{abstract}

\newpage

\section{Introduction}

Group testing is a fundamental and long-studied problem in statistical inference, which  was  introduced in the 1940s by Dorfman~\cite{dorfman1943detection}. The goal is to detect a set of $k$  \emph{defective items}, for a known parameter $k$, out of population of size $p$ using $n \ll p $ tests.  To achieve this, one is allowed to utilize a procedure that is able to test items in groups.  Each test is returned positive if and only if at least one item in the group is defective. 

As an illustrative example, group testing can be applied in the context of medical testing, enabling us to efficiently screen a population for a rare disease. In this setting, we assume that we have an estimate on the number of infected individuals (who correspond to the ``defective items"), as well as a way to take a sample, of say saliva or blood, from each individual, and test it. The idea then is that the number of tests needed to identify the infected individuals can be dramatically reduced by pooling samples.  Indeed, utilizing  group testing for pooling samples in medical tests was the original idea of Dorfman, and it is also highly relevant  nowadays due to the ongoing COVID-19 pandemic~\cite{cov3,cov2,cov1}. Besides medical testing though, group testing has found several real-world applications in a wide variety of areas, including communication protocols~\cite{anta2013unbounded},  molecular biology and DNA  library screening~\cite{cheng2008new}, pattern matching~\cite{clifford2010pattern},  databases~\cite{cormode2005s} and data compression~\cite{hong2001group}.

From an information theoretic point of view, the number of tests required depends on various assumptions on the mathematical model used, and the reader is referred to the  survey of Aldridge, Johnson and Scarlett~\cite{survey} for details on the several models that have been studied.
Here we  focus on the \emph{sublinear sparse regime}, i.e., the case where $k$ scales sublinearly with $p$; $k/p \rightarrow 0, $ as $p \rightarrow +\infty$. A standard sublinear setting in the group testing literature is when $k= \lfloor p^{\alpha } \rfloor$  for some constant sparsity parameter $ \alpha \in (0,1)$ \cite{survey}. This is the most interesting regime from a mathematical perspective, but also the one that is suitable for modeling the  early stages of an epidemic  \cite{wang2011evolution} in the context of medical testing. Further, we are interested in the so-called \emph{approximate} recovery task (also known as ``partial" recovery), where we tolerate having some small number of incorrectly classified items.  Note that, since in practical settings we only expect to have an estimation on the input number of defective items $k$, incorrect classifications may be unavoidable even if we are able to guarantee  ``exact" recovery. Note also that in several applications it might not be required for both \emph{false-positive} errors (non-defective items incorrectly classified as defective) and \emph{false-negative} errors (defective items incorrectly classified as non-defective)  to be zero. As an  example, when screening for diseases, a small number of false-positive errors might arguably be a small cost to pay  compared to performing many more pooled tests. 

More formally, let $\sigma^*$ denote the set of defective items and $\hat{\sigma}$ be the output of our estimator. For a set of items $\sigma$ let $\overline{\sigma}:= [p] \setminus \sigma $  denote the complement of $\sigma$.  Given $d \in \mathbb{N}$ let 
\begin{align*}
\Pr_d[\mathrm{err} ] := \Pr\left[ |\overline{\hat{\sigma}} \cap \sigma^*  | > d \text{ or } |\hat{\sigma} \cap \overline{\sigma^*}  | >d \right],
\end{align*}
denote the probability either the number of false negatives or false positives exceeds a common threshold $d$.
\begin{definition}\label{approx_rec_def}
Fix a parameter $\gamma \in (0,1)$ and let $d = \lfloor \gamma k \rfloor$. We say that an estimator achieves \emph{$(1-\gamma)$-approximate recovery} asymptotically almost surely if $\lim_{p \rightarrow +\infty } \Pr_d[ \mathrm{err} ]  = 0$.
\end{definition} Similar to the above definition, everywhere in this work, we say than a sequence of events $(A_p)_{p \in \mathbb{N}}$ happen asymptotically almost surely \textbf{(a.a.s.)} as $p \rightarrow +\infty$ if $\lim_{p \rightarrow +\infty} \Pr\left[A_p \right]=1.$


Finally, in this paper we consider the well-known and very simple to implement \emph{non-adaptive Bernoulli} group testing design, where all tests are conducted in parallel (non-adaptivity) and each item is part of any certain test with some fixed probability and independently of each other item. Despite its simplicity,   Bernoulli group testing is asymptotically information theoretically optimal  in the context of \emph{$(1-o(1) )$-approximate recovery} in the following sense. (The first part of Theorem~\ref{approx_phase} is proven in~\cite{scarlett2016phase}, while the second part in~\cite{separate_decoding}.) 

\begin{theorem}[\cite{scarlett2016phase,separate_decoding}]\label{approx_phase}
Let $k, p,d \in \mathbb{N}$ with $1 \le k \le p$. We assume that $k,p \rightarrow +\infty$ with $k = o(p)$.  Fix parameters $\gamma, \eta \in (0,1)$. Under  non-adaptive Bernoulli group testing in which each item participates in a test with probability $\nu/k$, and  $d = \gamma k$, we have the following:
\begin{enumerate}[(a)]
\item   With $\nu$ satisfying $\left( 1- \frac{\nu}{k} \right)^{k} = \frac{1}{2} $, there exists an estimator with the property that $\lim_{p \rightarrow +\infty} \Pr_{d}[ \mathrm{err} ] = 0$, provided that $n > (1+\eta) n^*$,
where $n^* =  k \log_2 \frac{p}{k}$.
\item For \emph{any test design} and any estimator, in order to achieve $ \lim_{p \rightarrow +\infty} \Pr_d[  \mathrm{err}] = 0 $ it is necessary that $n > (1-\eta)n_{\gamma}^*$, where $n_{\gamma}^* = (1-\gamma) k \log_2 \frac{p}{k}  = (1-\gamma) n^*.$
\end{enumerate}
\end{theorem}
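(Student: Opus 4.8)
The plan is to handle the two parts by the two classical routes: part (a) by a first‑moment bound over the sets that are \emph{consistent} with the observed outcomes, and part (b) by a volumetric counting argument of Fano type. (Both results are known, from \cite{scarlett2016phase,separate_decoding}; I sketch what I believe is the cleanest self‑contained route.) Throughout write $q := 1-\nu/k$, so the hypothesis in (a) reads $q^{k}=\tfrac12$, with the key consequence that under Bernoulli$(\nu/k)$ pooling each test is positive with probability exactly $1-q^{k}=\tfrac12$ (the entropy‑maximizing choice), and moreover $q^{\,k-\ell}\cdot q^{\ell}=q^{k}=\tfrac12$ for every $\ell$.

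\emph{Part (a).} I would analyze the estimator that outputs an arbitrary set $\hat\sigma$ of size exactly $k$ that is \emph{satisfying}: every positive test contains an element of $\hat\sigma$ and every negative test contains none. Such a set exists, since $\sigma^{*}$ is satisfying and $k$ is known. As $|\hat\sigma|=|\sigma^{*}|=k$ we have $|\hat\sigma\setminus\sigma^{*}|=|\sigma^{*}\setminus\hat\sigma|$, so it suffices to show that a.a.s.\ \emph{every} satisfying set $\sigma$ of size $k$ has $|\sigma\setminus\sigma^{*}|\le d=\gamma k$; then $\hat\sigma$ has at most $d$ false positives and at most $d$ false negatives. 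Fix $\sigma$ with $|\sigma\setminus\sigma^{*}|=\ell$ and split the relevant items into $A=\sigma\cap\sigma^{*}$, $B=\sigma\setminus\sigma^{*}$, $C=\sigma^{*}\setminus\sigma$, so $|B|=|C|=\ell$ and $|A|=k-\ell$. A single test $T$ is satisfied by $\sigma$ exactly when $T$ hits $A$, or $T$ misses $A$ and treats $B$ and $C$ alike (hits both or misses both); using independence of $\{T\cap A=\emptyset\}$, $\{T\cap B=\emptyset\}$, $\{T\cap C=\emptyset\}$ together with the identity above, this collapses to
\[
\Pr\big[\sigma\text{ satisfies a fixed test}\big]\;=\;q^{\ell}\;=\;2^{-\ell/k}.
\]
Since the $n$ tests are independent, a union bound over the $\binom{k}{\ell}\binom{p-k}{\ell}$ candidates at distance $\ell$ gives
\[
\Pr\big[\exists\,\sigma:\ |\sigma|=k,\ |\sigma\setminus\sigma^{*}|=\ell,\ \sigma\text{ satisfying}\big]\;\le\;\binom{k}{\ell}\binom{p-k}{\ell}\,2^{-\ell n/k}.
\]
Taking logarithms and using $\binom{k}{\ell}\le 2^{k}$, $\binom{p-k}{\ell}\le(ep/\ell)^{\ell}$, one checks that for every $\ell$ with $\gamma k<\ell\le k$ this is at most $2^{\,O(k)-\ell\eta\log_{2}(p/k)}$, which tends to $0$ once $n>(1+\eta)k\log_{2}(p/k)=(1+\eta)n^{*}$ (here $O(k)=o(k\log_{2}(p/k))$ because $p/k\to\infty$); summing the at most $k$ terms finishes (a).

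\emph{Part (b).} I would take $\sigma^{*}$ uniform among the $\binom{p}{k}$ sets of size $k$ --- the error probability can only be larger in the worst case, so this strengthens the impossibility claim --- and condition on all randomness of the design (and the estimator). Then the outcome vector lies in $\{0,1\}^{n}$, so it takes at most $2^{n}$ values (valid even for \emph{adaptive} designs), and $\hat\sigma$ is a deterministic function of it. For a fixed output $\tau$, the event that $\tau$ incurs no error forces $|\tau|\in[k-d,k+d]$ and $|\sigma^{*}\cap\tau|\ge k-d$, so the number of $\sigma^{*}$ for which $\tau$ is correct is at most $N:=(d+1)\,2^{\,k+d}\binom{p}{d}$, and a crude estimate with $d=\gamma k$ and $\gamma$ constant gives $\log_{2}N=\gamma k\log_{2}(p/k)+O(k)$. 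Hence
\[
\Pr[\text{no error}]\;\le\;\frac{2^{n}\,N}{\binom{p}{k}},\qquad \log_{2}\binom{p}{k}=k\log_{2}(p/k)+O(k),
\]
so $\log_{2}\Pr[\text{no error}]\le n-(1-\gamma)k\log_{2}(p/k)+O(k)$. If $n\le(1-\eta)n_{\gamma}^{*}=(1-\eta)(1-\gamma)k\log_{2}(p/k)$, the right side equals $-\eta(1-\gamma)k\log_{2}(p/k)+O(k)\to-\infty$ since $\log_{2}(p/k)\to\infty$; thus $\Pr_{d}[\mathrm{err}]\to1$, contradicting the hypothesis, and $n>(1-\eta)n_{\gamma}^{*}$ is necessary.

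The main obstacle is not conceptual but bookkeeping: converting the exponential estimates into the clean multiplicative constants $1+\eta$ and $1-\eta$ requires verifying that every stray factor ($2^{k}$, $2^{k+d}$, $(e/\gamma)^{\Theta(k)}$, $\Theta(\log k)$, and the $O(k)$ from $\log_{2}\binom{p}{k}$) is $o\!\big(k\log_{2}(p/k)\big)$ --- which is exactly the content of $k=o(p)$. A secondary subtlety is the choice of decoder in (a): one must insist that $\hat\sigma$ has size exactly $k$, otherwise anomalously small satisfying sets would need a separate treatment.
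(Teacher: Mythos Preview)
Your proof is correct. Note, though, that the paper does not itself prove Theorem~\ref{approx_phase} --- it is quoted as a known result from \cite{scarlett2016phase,separate_decoding} --- so there is no in-paper proof to compare against directly. That said, the paper does prove the closely related Lemma~\ref{key_approx_lemma} in Appendix~\ref{key_approx_lemma_proof}, and your argument for part~(a) is essentially that same first-moment/union-bound calculation: union over sets at a fixed overlap with $\sigma^*$, using the identity $\Pr[\sigma\text{ satisfies a fixed test}]=2^{-\ell/k}$ when $|\sigma\setminus\sigma^*|=\ell$ (the paper parametrizes by the overlap instead, writing the equivalent $2^{\ell/k-1}$), and then checking that all the $O(k)$ slack terms are swallowed by $k=o(p)$. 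Your derivation of this single-test probability from the decomposition into $A,B,C$ and the identity $q^{k-\ell}q^{\ell}=q^k=\tfrac12$ is correct. For part~(b) there is nothing in the paper to compare to; your Fano-type counting argument (at most $2^n$ outcome strings, each declaring success for at most $N$ truths, with $\log_2 N=\gamma k\log_2(p/k)+O(k)$) is standard and sound, and the observation that it covers adaptive designs because the estimator is still a function of the $n$-bit outcome sequence is the right one.
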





\begin{remark}\label{toso_antikeimenika}
 Strictly speaking, the result in~\cite{scarlett2016phase} requires $\nu = \ln 2$ for Part (a) of Theorem~\ref{approx_phase}, which is though  equal asymptotically to the value of $\nu$ as we stated it. In Lemma~\ref{key_approx_lemma} we provide a slightly more general result which justifies this minor change.
\end{remark}

\begin{remark}\label{sou_lew} 
It is known~\cite{coja2020optimal}  that performing $n_{\mathrm{exact} }$ tests is asymptotically a necessary and sufficient condition (information theoretically)  for \emph{exact} recovery in the  non-adaptive group testing setting, where $k=p^{\alpha}, \alpha \in (0,1),$
 \begin{align}\label{bound_exact}
 n_{\mathrm{exact} } =  \max \left\{  \frac{k}{ \ln2 }  \log_2 k   , n^* \right\} = \max \left\{ \frac{\alpha}{\ln2}   , 1-\alpha \right\} p^{\alpha} \log_2 p.
 \end{align}
Observe that for $\alpha   > \frac{1}{1 + \frac{1}{ \ln2 }  } \approx  0.41 $ the first term in~\eqref{bound_exact} dominates and, therefore,  the approximate recovery bound promised by Theorem~\ref{approx_phase} provides an  improvement in the number of tests required. Note also that Coja-Oghlan et al. obtain this result  via a sophisticated test-design other than Bernoulli group-testing.
\end{remark}

We remark that the estimator promised by the first part Theorem~\ref{approx_phase} is in principle \textit{not computationally efficient}. In particular, Theorem~\ref{approx_phase} refers to exhaustive  search towards finding a set of $k$ items which (nearly) satisfies all the tests, namely none of the $k$ items participate in a negative test, and at least one of them participates in each positive test. (See Lemma~\ref{key_approx_lemma} for an exact statement and~\cite{scarlett2016phase,separate_decoding} for a relevant discussion). Note also that exhaustive search in principle takes $\binom{p}{k}$ time.  

The currently best known efficient algorithm~\cite{scarlett2018near} for $(1-o(1))$-approximate recovery under non-adaptive Bernoulli group testing requires $(\ln 2)^{-1}  \cdot n^*$  tests asymptotically.
This gap between the  number of tests above which recovery is information theoretically possible, and the number of tests required by the currently best known algorithms to succeed,   is usually referred to as a \emph{computational-statistical gap}. The extent to which this gap is of fundamental nature has drawn significant attention in the group testing community and has been explicitly posed as one of nine open problems in the recent theoretical survey on the topic \cite[Section 6, Open Problem 3]{survey}. The natural belief that  simple ``random-design" settings exhibit a potentially fundamental such computational-statistical gap, similar to the one observed in the infamous planted clique model (see e.g. the introduction on \cite{gamarnik2019landscape} and below), is one of the main reasons why researchers in the area of group testing have studied test designs other than Bernoulli testing (see e.g.~\cite{coja2020optimal, johnson2018performance, mezard2008group, wadayama2016nonadaptive,nakoni}).  Indeed,  recovery in the non-adaptive Bernoulli group testing setting can be seen as equivalently solving \emph{planted Set Cover} instances coming from a certain random family. (Recall that Set Cover, similar to Max Clique, is an NP-complete problem. We discuss this connection in Remark~\ref{set_cover}.) Notably,   in the context of exact recovery via non-adaptive testing,  the line of work going beyond the Bernoulli design  culminated  with the very recent paper of Coja-Oghlan et.al.~\cite{coja2020optimal} which shows that $n_{\mathrm{exact} }$ is the information-theoretic/algorithmic phase transition threshold.  Specifically, Coja-Oghlan et.al. provide  a sophisticated test design which is inspired by recent advances in coding theory known as spatially coupled low-density parity check codes~\cite{felstrom1999time,kudekar2011threshold},  which matches the information theoretical optimal bound, $n_{\mathrm{exact} }$. In addition, they propose an  efficient algorithm which, given as input  $n_{\mathrm{exact}}$ tests (asymptotically) from their test design,  successfully infers the set of defective items with high probability. 

In the light of Theorem~\ref{approx_phase}, Remarks~\ref{toso_antikeimenika},~\ref{sou_lew}, the simplicity of the Bernoulli group testing design, and our discussion above, a natural  and important question both from a theoretical and application point of view is the following:

\medskip
\emph{Is there a computational-statistical gap under non-adaptive Bernoulli group testing for the task of  $(1-o(1))$-approximate recovery?}
\medskip

Our main contribution in this paper  is to  provide  extensive theoretical and experimental evidence suggesting that, perhaps surprisingly, such a gap does \emph{not} exist. We do so by showing  that in some, potentially fundamental, geometric way the Bernoulli group testing behaves differently than other models exhibiting computational-statistical gaps. In particular, we provide evidence which indicate that the inference task at hand  can be performed efficiently via a very simple local search algorithm. 

We stress that, given  the recent results of~\cite{coja2020optimal} regarding exact recovery, it would not be surprising if a ``spatial coupling"-inspired   test design  also  succeeds in closing the computational-statistical gap in the approximate recovery task. After all, ``spatial coupling"-inspired models are known to not exhibit gaps that appear in simpler random models in all the contexts they have been applied, see  e.g.~\cite{achlioptas2016bounds} for a discussion of this phenomenon in the context of random constraint satisfaction problems and e.g.~\cite{DonohoSpatial,KrzakalaETAL} in the context of compressed sensing. However, the main message of this paper is that the very simple non-adaptive Bernoulli group testing design may already be sufficient for  computationally efficient $(1-o(1))$-approximate recovery.

We discuss our results informally forthwith, and we present them formally in Appendices~\ref{formal_statement} and~\ref{simulations}.  We  remark that throughout the paper we will be interested in different values of the parameter $\nu$, which recall that  dictates the probability $\nu/k$ with which each item participates in a certain test, being precise in each of our statements.

Computational gaps between what existential or brute-force/information-theoretic methods promise and what computationally efficient algorithms achieve arise in the study of several ``non-planted" models like  random constraint satisfaction problems, see e.g.~\cite{mitsaras_barriers,coja2015independent,gamarnik2014limits}, but also several ``planted" inference algorithmic tasks, like  high dimensional linear regression~\cite{gamarnik2017high, gamarnik2019sparse},  tensor PCA~\cite{arous2020algorithmic, AndreaTensor}, sparse PCA \cite{gamarnik2019overlap, ArousSparse} and, of course, the planted clique problem~\cite{jerrum1992large, BarakClique, gamarnik2019landscape}. In the latter context the gaps are commonly referred to as computational-statistical gaps, as we mentioned above.  Often times such gaps are  explained by the presence of a geometric property in the solution space of the problem known as \emph{Overlap Gap Property} (OGP), which has been repeatedly observed to appear exactly at the regime where local algorithms cannot solve efficiently the problem. Additionally, it has also been observed that in the absence of this property the problem is amenable to simple local search algorithms. OGP is a notion  originating in spin glass theory and the groundbreaking work of Talagrand~\cite{talagrand2003spin} and, in some form, it has been originally introduced in the context of computational gap for the ``non-planted model" of random $k$-SAT~\cite{achlioptas2006solution,mezard2005clustering}.  Recently it has been defined and analyzed also in the context of computational-statistical gaps for the high dimensional linear regression model \cite{gamarnik2017high,gamarnik2019sparse},  the tensor PCA model~\cite{arous2020algorithmic}, the planted clique model \cite{gamarnik2019landscape} and the sparse PCA model \cite{ArousSparse,gamarnik2019overlap}. In all such planted models, it is either proven, or suggested by evidence, that the OGP phase transition takes place exactly at the point where we observe and expect local algorithms to work. From a rigorous point of view, the existence of OGP has been proven to imply the failure of various MCMC methods \cite{gamarnik2019landscape,gamarnik2019overlap,ArousSparse}, yet a rigorous proof that at its absence local method works remains one of the important conjectures in this line of research. Interestingly, progress in the last direction has been recently made in the context of certain ``non-planted" mean field spin glass systems \cite{subag2019following, AndreaFOCS19, AlaouiSpin20} where under the conjecture of the absence of a property similar to OGP the success of a certain ``local" Approximate Message Passing method has been established. In this work  we attempt to locate exactly the OGP phase transition (if it exists at all) in order to understand if the non-adaptive Bernoulli group testing exhibits  a computational-statistical gap.


It should be noted that over the recent years researchers have approached the study of whether such gaps are fundamental from various different angles other than the OGP. For example, researchers have studied average-case reductions between various inference models (see e.g. \cite{BRred13,BBH18, brennan2020reducibility} and references therein), the performance of various restricted classes of inference algorithms, such as low-degree methods (see e.g. \cite{BarakSOS, hopkins-thesis, kunisky2019notes, schramm2020computational}), message passing algorithms such as Belief Propagation and Approximate Message Passing (see e.g. \cite[Chapters 3,4]{bandeira2018notes} and references therein), and statistical-query algorithms (see e.g. \cite{VitalySQ}).

To conclude, in line with the thread of research of studying computational-statistical gaps via their geometric phase transitions, in this paper we seek to understand whether the optimization landscape of the $(1-o(1))$-approximate inference task under Bernoulli group testing is smooth enough for local search algorithms to succeed.

\section{Contributions}

In this section we report our main results. Due to the technical nature of many of our theoretical results, we choose to do this in the main body of the paper in an informal manner and, in particular, via the informally stated Theorems~\ref{first_moment_decreasing_inf},~\ref{local_search_theorem_informal} and Corollary~\ref{no_false_negatives_inf}. In the Appendix we present formally all the statements with their proofs.
In this section we also discuss the main insights from our experimental results (see Figures~\ref{Optimization} and~\ref{Comparison}), but we give the full details in Appendix~\ref{simulations}.

\subsection{Absence of the Overlap Gap Property (OGP) }

For our first and main result we provide first moment evidence suggesting that the landscape of the optimization problem corresponding to the inference task does \emph{not} exhibit the Overlap Gap Property accross the regime where inference is information-theoretic possible. We state our result formally in Appendix~\ref{absence_ogp}, where we also give details regarding its  technical aspects. Here we informally discuss why we believe this is a potentially fundamental reason for computational tractability, and what we mean when referring to ``first moment evidence".

A first observation towards approximate recovery is  that one can remove from consideration any item that participates in a negative test, as such an item is certainly non-defective. We call the remaining items as \emph{potentially defective}. We also say that a potentially defective item \emph{explains} a certain positive test if it participates in it. The following key lemma informs us that finding a set of  $k$ potentially defective items whose elements explain all but a few positive tests  implies approximate recovery. Its proof can be found in Appendix~\ref{key_approx_lemma_proof}. 

\begin{lemma}\label{key_approx_lemma}
Let $k, p \in \mathbb{N}$ with $1 \le k \le p$. We assume that $k,p \rightarrow +\infty$ with $k = o(p)$.  Fix parameters $\delta, \epsilon \in (0,1)$. Assume we observe  $n \geq  (1+\epsilon) k \log_2 \frac{p}{k}  $ tests  under  non-adaptive Bernoulli group testing in which each item participates in a test with probability $\nu/k$, where $\nu$ satisfies $(1-\frac{\nu}{k })^{k} = \frac{1}{2}$. There exists an $\epsilon'>0$ such that every set of size $k$ whose elements explain at least $(1-\epsilon')n$ tests must contain at least $\lfloor(1-\delta) k \rfloor$ defective items, a.a.s. as $p \rightarrow +\infty.$
\end{lemma}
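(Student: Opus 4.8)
The plan is a first moment (union bound) argument over all ``bad'' sets, organized by how many defective items they miss. Call a set $S$ \emph{$\ell$-bad} if $|S|=k$ and $|\sigma^*\setminus S|=\ell$; since $|S|=|\sigma^*|=k$ this also forces $|S\setminus\sigma^*|=\ell$. The conclusion of the lemma fails exactly when some $\ell$-bad set with $\ell>\lceil\delta k\rceil$ satisfies at least $(1-\epsilon')n$ tests, so it suffices to produce a constant $\epsilon'=\epsilon'(\epsilon,\delta,\nu)>0$ for which, a.a.s., no such set exists. Throughout I will use the normalization $(1-\nu/k)^k=\tfrac12$ (so $\nu=\ln2+o(1)$), which means each individual test is negative with probability $\tfrac12$.

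The first step is to compute, for a fixed $\ell$-bad $S$, the probability $q_\ell$ that $S$ fails a single test. Letting $A,B,C$ count the items of $S\cap\sigma^*$, of $\sigma^*\setminus S$, and of $S\setminus\sigma^*$ that land in that test (independent binomials with parameters $(k-\ell,\nu/k)$, $(\ell,\nu/k)$, $(\ell,\nu/k)$), the set $S$ fails the test exactly on the disjoint events $\{A=0,B=0,C\ge1\}$ (a negative test hit by a non-defective of $S$) and $\{A=0,C=0,B\ge1\}$ (a positive test no item of $S$ explains), so
\[
q_\ell=2\Bigl(1-\tfrac{\nu}{k}\Bigr)^{k}\Bigl(1-\bigl(1-\tfrac{\nu}{k}\bigr)^{\ell}\Bigr)=1-\Bigl(1-\tfrac{\nu}{k}\Bigr)^{\ell},
\]
which depends on $S$ only through $\ell$; in particular $q_\ell\ge1-2^{-\delta}$ once $\ell\ge\delta k$, again using $(1-\nu/k)^k=\tfrac12$. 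Since the $n$ tests are independent, the number of tests $S$ fails is $\mathrm{Bin}(n,q_\ell)$, so ``$S$ satisfies $\ge(1-\epsilon')n$ tests'' is the event $\{\mathrm{Bin}(n,q_\ell)\le\epsilon'n\}$, which for $\epsilon'<q_\ell$ has probability at most $\exp(-n\,D(\epsilon'\|q_\ell))$ by the lower-tail Chernoff bound ($D$ the binary KL divergence), where $D(\epsilon'\|q_\ell)\ge(1-\epsilon')\bigl(-\ln(1-q_\ell)\bigr)-H(\epsilon')$ and $-\ln(1-q_\ell)=-\ell\ln(1-\nu/k)=\tfrac{\ell\ln2}{k}(1+o(1))$.

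Next I would count: there are at most $\binom{k}{\ell}\binom{p-k}{\ell}\le(ek/\ell)^\ell(ep/\ell)^\ell$ many $\ell$-bad sets, so their log-number is $\ell\ln(p/k)+O(k)$ uniformly in $\ell$. Feeding $n=(1+\epsilon)k\log_2(p/k)$ into the union bound, the probability that some $\ell$-bad set satisfies $\ge(1-\epsilon')n$ tests is at most
\[
\exp\!\Bigl(\bigl[1-(1+\epsilon)(1-\epsilon')(1+o(1))\bigr]\,\ell\ln\tfrac{p}{k}+\tfrac{(1+\epsilon)H(\epsilon')}{\ln2}\,k\ln\tfrac{p}{k}+O(k)\Bigr).
\]
I would then fix $\epsilon'>0$ small enough (depending only on $\epsilon,\delta,\nu$) that $\epsilon'<1-2^{-\delta}$, that $(1+\epsilon)(1-\epsilon')-1\ge\epsilon/2$, and that $H(\epsilon')$ is below a suitable small multiple of $\epsilon\delta$; then for every $\ell\ge\delta k$ and all large $p$ the exponent above is $-\Omega(k\ln(p/k))$, and summing the at most $k$ relevant values of $\ell$ still gives $o(1)$ since $\ln(p/k)\to\infty$. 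Together with routine bookkeeping on the floors and on the $o(1)$ discrepancies ($\nu$ versus $\ln2$, $(1-\nu/k)^k$ versus $\tfrac12$, $-\ln(1-\nu/k)$ versus $\nu/k$), this yields the lemma with this $\epsilon'$.

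The main obstacle is the tension in the union bound between the number of bad sets, which grows like $(p/k)^{\ell}$, and their per-instance failure probability. A crude bound on $q_\ell$ — e.g.\ only counting positive tests carried solely by missed defectives — is too lossy, and the union bound collapses for small $\delta$; what makes the argument go through is the exact identity $q_\ell=1-(1-\nu/k)^{\ell}$, whose derivation crucially uses that the Bernoulli design makes $\approx$ half its tests negative (the role of $(1-\nu/k)^k=\tfrac12$). This forces $-\ln(1-q_\ell)$ to scale like $\tfrac{\ell}{k}\ln2$, which exactly matches the entropy $\ell\ln(p/k)$ of the set choice once one is handed $(1+\epsilon)$ times $k\log_2(p/k)$ tests; the remaining work is the fiddly but routine task of choosing $\epsilon'$ uniformly small enough to absorb the $H(\epsilon')$ loss in the Chernoff exponent against the available $\epsilon$-slack.
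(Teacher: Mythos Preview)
Your proposal is correct and is essentially the same first-moment/union-bound argument the paper gives: both organize by the number of defectives missed, both use the exact per-test satisfaction probability $2^{\ell/k-1}$ (equivalently your $1-q_\ell$), and both choose $\epsilon'$ small so the $\epsilon$-slack in $n$ beats the entropy of choosing the bad set. The only cosmetic difference is that the paper handles ``at least $(1-\epsilon')n$ tests satisfied'' by a further union bound over which subset of tests is satisfied (incurring a $\binom{n}{\epsilon' n}\le e^{nH(\epsilon')}$ factor), whereas you apply the lower-tail Chernoff/KL bound directly to $\mathrm{Bin}(n,q_\ell)$; these produce the same $H(\epsilon')$-size correction term and the rest of the algebra is identical.
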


Based on Lemma~\ref{key_approx_lemma}, to perform optimal approximate recovery we consider the case where $\nu>0$ satisfies $(1-\frac{\nu}{k})^k=\frac{1}{2}$ and consider the task of minimizing the number of unexplained tests over the space of $k$-tuples of potentially defective items, which we denote by $\Omega_k$.  In particular, we seek to understand ``smoothness" properties of the underlying optimization landscape.   

Informally, the Overlap Gap Property (OGP) with respect to this minimization problem says that the near-optimal solutions of this problem form two disjoint and well-separated clusters, one  corresponding to sets in $\Omega_k$  which are ``close" to the set of defective items, and one corresponding to sets in $\Omega_k$ which are ``far" from it. (See  Definition~\ref{def:OGP} in the Appendix for a rigorous definition.) At the presence of such a disconnectivity property, one can rigorously prove that a class of natural local search algorithms fails (see e.g. \cite{gamarnik2019landscape}). As mentioned above a highly non-rigorous, yet surprisingly accurate in many contexts computational prediction \cite{gamarnik2017high, gamarnik2019landscape, gamarnik2019sparse, ArousSparse}  is that the OGP can be the only type of computational bottlenecks there is for local algorithms. In other words, the prediction is that, at its absence, an appropriate local search algorithm works efficiently and can solve the optimization problem to (near) optimality. In this work, we provide ``first moment evidence" that the OGP \textit{never appears} in the landscape of the mentioned minimization problem when inference is possible, i.e. when $n \geq (1+\epsilon)k \log_2 \frac{p}{k}$ for any $\epsilon>0.$

Now in order to explain what we mean by ``first moment evidence" for the absence of the OGP, let us point out that the standard way of studying the existence of OGP is by studying the monotonicity of the function $\phi(\ell)$, namely the minimum possible value of unexplained tests, which is minimized over the sets in $\Omega_k$ that contain \textit{exactly $\ell$ defective items} (has overlap with the set of defective items equal to $\ell$)  (see Definition~\ref{phidef} for a formal definition, and also~\cite{gamarnik2017high, gamarnik2019landscape}). This is because a necessary implication of the existence of OGP is that, roughly speaking, $\phi(\ell)$ is not decreasing. (See Lemma~\ref{lem:monot} for a formal statement.) Intuitively, such a connection holds since OGP roughly implies that the overlap values $\ell$ for which $\phi(\ell)$ is small (few unexplained sets) are either ``large" (corresponding to large intersection with the true defective items — the ``close" cluster) or ``small" (corresponding small intersection with the true defective items — the ``far" cluster). Hence,  such a function cannot be decreasing with $\ell$.

Towards understanding the monotonicity properties of $\phi(\ell)$ we need to understand the value $\phi(\ell)$ which is the optimal value of a restricted random combinatorial optimization problem. For this we use the moments method. In particular,  we define for $t,\ell \ge 0$, the counting random variable
\begin{align*}
    \Sigma_{\ell,t} = \{ \sigma \in \Omega_k:  | \sigma \cap \sigma^* | = \ell, H(\sigma) \le t   \},
\end{align*}
where recall that $\sigma^*$ denotes the set of defective items, and let $H(\sigma)$ denote the number of unexplained tests with respect to $\sigma$. Observe that
\begin{align*}
\phi(\ell) \le t \Leftrightarrow \Sigma_{\ell,t}\geq 1,
\end{align*}
and that, in particular,  by Markov's inequality and Paley's-Zigmund's inequality we have for all $t>0$ and $\ell \in \{0,1,2,\ldots, \lfloor(1-\epsilon) k \rfloor \}$:
\begin{align*}
\frac{\mathbb{E}\left[\Sigma_{t,\ell}\right]^2}{\mathbb{E}\left[\Sigma^2_{t,\ell}\right]} \leq \Pr\left[\phi(\ell) \le t\right] = \Pr[  \Sigma_{\ell,t } \ge 1  ]  \leq \mathbb{E}\left[\Sigma_{t,\ell}\right].
\end{align*}
Hence, if for some $t_1,t_2>0$ it holds $\mathbb{E}\left[\Sigma_{t_1,\ell}\right]=o(1)$ we have $\phi(\ell)>t_1$ a.a.s as $p \rightarrow +\infty$ and if $\frac{\mathbb{E}\left[\Sigma_{t_2,\ell}\right]^2}{\mathbb{E}\left[\Sigma^2_{t_2,\ell}\right]} =1-o(1)$ or equivalently $\frac{\mathrm{Var}\left[\Sigma_{t_2,\ell}\right]}{\mathbb{E}\left[\Sigma_{t_2,\ell}\right]^2} =o(1)$ we have $\phi(\ell) \leq t_2$ a.a.s. The employment of the first moment to get an a.a.s. lower bound is called a first moment method, and the employment of the second moment to get an a.a.s. upper bound is called the second moment method.

In many cases of sparse combinatorial optimization problems it has been established in the literature that the first and second moment methods can be proven for $t_1,t_2$ sufficiently close to each other, sometimes satisfying even $t_2 \leq t_1+2,$ a phenomenon known as 2-point concentration (see e.g. \cite{bollobas1982vertices} or the more recent \cite{balister2019dense}). Here we say that we provide first moment evidence for the OGP, because we do not check the second moment method to prove the sufficient  concentration of measure, but we only use the first moment to derive a prediction for the value on which $\phi(\ell)$ concentrates on.  Specifically, we \textit{define} the function which maps $\ell$ to the value $t=t_{\ell}>0$ for which the first moment satisfies $$\mathbb{E}\left[\Sigma_{t,\ell}\right]=1, $$ and we call it the \textit{first moment function} and denote it by $F(\ell)$. We use $F(\ell)$ as an approximation for the value of $\phi(\ell).$ Note that the first moment prediction is expected to correspond to the critical $t>0$ above of which this first moment will blow up (where the second moment method can work) and below it will shrink to zero (where Markov inequality works). The first moment prediction has been proven, admittedly via lengthy and elaborate conditional second moment arguments, to provide tight predictions for the associated function $\phi(\ell)$ in many setting similar to the group testing where OGP has been studied, such as the regression setting \cite{gamarnik2017high} and the planted clique setting \cite{gamarnik2019landscape}. Notably, in these setting the approximation is tight enough so that the existence or not of OGP is directly related to the monotonicity of the first moment function, and furthermore the transition point where this function becomes decreasing corresponds exactly to the point where local algorithms are expected to work. Finally, it is worth pointing out that a similar ``first moment", or ``annealed complexity", method has been employed for the study of the landscape of the spiked tensor PCA model \cite{MeiPCA, Biroli19}.

In our work, we explicitly derive the first moment function $F(\ell)$ for the Bernoulli group testing model and prove that it remains decreasing (in the sense of Lemma~\ref{lem:monot}) \textit{throughout} the regime $n \geq (1+\epsilon)k \log_2 \frac{p}{k}$ for any $\epsilon>0$. 
\begin{theorem}[Informal Statement]\label{first_moment_decreasing_inf}
Let $k,p \in \mathbb{N}$ with $ 1 \leq k \leq p$. We assume that $k,p \rightarrow +\infty$ with $k=o(p)$ and consider the Bernoulli group testing model with $\nu>0$ such that $(1-\frac{\nu}{k})^k=\frac{1}{2}.$ Then if $n \geq (1+\epsilon)k \log_2 \frac{p}{k}$ for any $\epsilon>0,$ (the information-theoretic threshold for the problem), the first moment function of the model is strictly decreasing.
\end{theorem} One can pictorially observe the decreasing property of the first moment curve in Figure \ref{fig:monoton} all the way to the information-theoretic threshold. To get the result into context we invite the reader to compare this behavior with the similar monotonicity property of the first moment curve in the regression setting \cite{gamarnik2017high} and its widely believed computational-statistical gap. In Figure 1 of the arXiv version of \cite{gamarnik2017high}  one can see that the first moment curve in the information-theoretic relevant regime, transitions as $n$ increases from being non-monotonic (exactly at the conjectured ``hard" regime), to being monotonic (exactly at the ``easy" regime). Our result is that such a transition \textit{never appears} in the Bernoulli group testing model.

Upon a conjectured tightness of the second moment method, we show how this implies the absence of the OGP when  $n \geq (1+\epsilon)k \log_2 \frac{p}{k}$ for any $\epsilon>0$ (see Theorem \ref{thm:OGP}).  We consider this notable evidence that a local search algorithm can succeed in this regime, suggesting that there is actually no computational-statistical gap. Remarkably this is in full agreement with our experiments section below. We formalize and further explain all the latter statements in Appendix~\ref{absence_ogp}.

\subsection{Absence of bad local minima} 

For our second theoretical contribution we study a much more strict notion of optimization landscape smoothness, namely the absence of ``bad"  local minima. This is a very stringent, but certainly sufficient, condition for the success of even  greedy local improvements algorithms. Hence, it is naturally not expected to coincide with the OGP phase transition where a potentially more elaborate local algorithm could be needed (see \cite{gamarnik2019sparse} for a similar result in the context of regression). Yet, our evidence that the OGP does not appear for the Bernoulli group testing model when $n \geq (1+\epsilon)k \log_2 \frac{p}{k}$ for any $\epsilon>0$, conceivably suggests that the landscape of the minimization problem could be smooth enough to not even contain bad local minima when $n \geq (1+\epsilon)k \log_2 \frac{p}{k}$ for some  small values of $\epsilon>0.$ 

We consider the same objective function as in the previous paragraph, but this time we study the space of $k'$-tuples of potentially defective items, where $k' = \lfloor (1+\epsilon) k \rfloor$ for any fixed $\epsilon \in [0,1)$. Crucially, we now study all the possible Bernoulli group testing designs by considering a fixed by arbitrary parameter $\nu$ (recall that for each test, each item is tested with probability $\nu/k$).  Informally, a bad local minimum $\sigma$ is a $k'$-tuple of potentially defective items that contains a non-negligible number of non-defective items,  and such that every other $k'$-tuple in Hamming distance two from $\sigma$ explains at most the same number of positive tests. We give a formal definition in Appendix~\ref{undesirable_local}, Definition~\ref{bad_local_minima}.

 Our  result  is a bound on the number of tests required for the absence of bad local minima as a function of parameters $\epsilon,\delta, \nu$, where we are interested in $(1-\epsilon- \delta)$-approximate recovery. We state the theorem informally below, and formally in Appendix~\ref{undesirable_local}, Theorem~\ref{local_search_theorem}.

\begin{theorem}[Informal Statement]\label{local_search_theorem_informal}
Let $k,p \in \mathbb{N}$ with $ 1 \leq k \leq p$. We assume that $k,p \rightarrow +\infty$ with $k=o(p)$. Fix parameters $\nu>0, R \in (0,1),\delta, \epsilon \in [0,1)$ such that $\delta \epsilon > 0$, set $ k' = \lfloor(1+\epsilon)k \rfloor$, and assume that we observe the outcome of  $n=\lfloor \frac{ \log_2 \binom{p}{k}}{R} \rfloor$ tests under non-adaptive Bernoulli group testing in which each item participates in a test with probability $\nu/k$. If 
\begin{align*}
R<\frac{   \nu \mathrm{e}^{-\nu}   }{\ln2} + \max_{\lambda \geq 0} \min_{\zeta \in [0,1- \delta )} Q(\lambda, \zeta, \nu,  \epsilon), 
\end{align*}
where $Q = Q(\lambda, \zeta, \nu, \epsilon) $ is given in~\eqref{formula} (due to its elaborate form), then there exists no bad local minima with respect to $(1-\epsilon-\delta)$-approximate recovery in the space of $k'$-tuples of items.
\end{theorem}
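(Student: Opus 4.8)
The plan is a \emph{first moment} (union bound) argument over candidate bad local minima, stratified by their overlap with $\sigma^*$. Let $N$ be the number of bad local minima and $N_\ell$ the number of those $\sigma$ with $|\sigma\cap\sigma^*|=\ell$; it suffices to prove $\sum_\ell \ex[N_\ell]=o(1)$ and invoke Markov's inequality. By Definition~\ref{bad_local_minima} the relevant overlaps are $\ell=\zeta k$ with $\zeta\in[0,1-\delta)$ (for a $k'$-tuple the false-positive constraint is the binding one for $(1-\epsilon-\delta)$-approximate recovery), and there are only $O(k)$ such values, so it is enough to show $\ex[N_\ell]=o(1/k)$ uniformly in $\ell$. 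For a canonical $k'$-subset $\sigma_0$ of $[p]$ with $|\sigma_0\cap\sigma^*|=\ell$, linearity of expectation and symmetry give
\[
\ex[N_\ell]=\binom{k}{\ell}\binom{p-k}{k'-\ell}\,\Pr[\sigma_0\subseteq\mathrm{PotDef}]\cdot\Pr\big[\sigma_0\text{ is a local minimum}\mid \sigma_0\subseteq\mathrm{PotDef}\big],
\]
where ``$\sigma_0\subseteq\mathrm{PotDef}$'' means every item of $\sigma_0$ is potentially defective.

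The combinatorial factors and the potential-defectiveness probability are routine. With $n=\lfloor \log_2\binom{p}{k}/R\rfloor$ and $k=o(p)$, Stirling gives $\binom{k}{\ell}=2^{o(n)}$ (negligible) and $\log_2\binom{p-k}{k'-\ell}=(1+\epsilon-\zeta)\log_2\binom{p}{k}(1+o(1))=(1+\epsilon-\zeta)Rn(1+o(1))$. A test violates the potential-defectiveness condition for $\sigma_0$ exactly when it is negative and contains one of the $k'-\ell$ non-defective items of $\sigma_0$, an event of probability $\pi=(1-(1-\tfrac{\nu}{k})^{k'-\ell})(1-\tfrac{\nu}{k})^{k}$, so $\Pr[\sigma_0\subseteq\mathrm{PotDef}]=(1-\pi)^n$ contributes $-n\pi/\ln2=-\tfrac{n}{\ln2}e^{-\nu}(1-e^{-\nu(1+\epsilon-\zeta)})(1+o(1))$ to $\log_2\ex[N_\ell]$. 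After dividing the requirement $\tfrac1n\log_2\ex[N_\ell]<0$ by the positive factor $(1+\epsilon-\zeta)$, these two ingredients produce the term $\tfrac{\nu e^{-\nu}}{\ln2}$ together with the $\zeta$-dependent remainder $\tfrac{e^{-\nu}(1-e^{-\nu(1+\epsilon-\zeta)})}{(1+\epsilon-\zeta)\ln2}-\tfrac{\nu e^{-\nu}}{\ln2}$, which I expect to form part of $Q$.

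The heart of the argument is the last factor, $\Pr[\sigma_0\text{ loc.\ min.}\mid \sigma_0\subseteq\mathrm{PotDef}]$. A local minimum has no improving Hamming-distance-$2$ neighbour, so in particular no swap ``remove a non-defective $a\in\sigma_0$, insert a defective $b^*\in\sigma^*\setminus\sigma_0$'' is improving; for such a swap one computes $H(\sigma')-H(\sigma_0)=L_a-L_a^{b^*}-G_{b^*}$, where $L_a=\#\{T:T\cap\sigma_0=\{a\}\}$, $G_{b^*}=\#\{T:b^*\in T,\ T\cap\sigma_0=\emptyset\}$, and the double-hit term $L_a^{b^*}=\#\{T:T\cap\sigma_0=\{a\},\ b^*\in T\}$ is lower order (here one uses that, conditionally on $\sigma_0\subseteq\mathrm{PotDef}$, every test hitting $a$ is positive). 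Hence $\sigma_0$ being local minimal forces $G_{b^*}\le L_a$ for all $(a,b^*)$, so for any threshold $s$, $\Pr[\sigma_0\text{ loc.\ min.}\mid\mathrm{PotDef}]\le \Pr\big[\textstyle\max_{b^*}G_{b^*}\le s\big]+\Pr\big[\textstyle\min_a L_a\ge s\big]$. The crucial structural point is that under the $\mathrm{PotDef}$ conditioning $\ex[L_a]\approx(1-e^{-\nu(1-\zeta)})\,\ex[G_{b^*}]<\ex[G_{b^*}]$ for $\zeta<1$: the conditioning reweights the tests that $a$ uniquely hits towards being positive, thinning their count, so the \emph{typical} swap strictly decreases $H$ and ``all swaps fail'' is a genuine large-deviation event. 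Conditioning further on $\{T\cap\sigma_0\}_T$ makes the $G_{b^*}$ independent $\mathrm{Bin}(|S|,\nu/k)$ random variables (with $S=\{T:T\cap\sigma_0=\emptyset\}$) and essentially decouples the $L_a$, so Chernoff bounds with a tilt parameter $\lambda\ge0$ give $\Pr[\max_{b^*}G_{b^*}\le s]\le 2^{-(1-\zeta)k\,I_G(s,\lambda)}$ and $\Pr[\min_a L_a\ge s]\le 2^{-(1+\epsilon-\zeta)k\,I_L(s,\lambda)}$, each an exponent of order $n$. Optimising over $s$ and $\lambda$ and dividing by $(1+\epsilon-\zeta)n$ as above supplies the remaining part of $Q(\lambda,\zeta,\nu,\epsilon)$; demanding $\tfrac1n\log_2\ex[N_\ell]<0$ for every $\zeta\in[0,1-\delta)$ then yields the stated bound on $R$ (with $\min_\zeta$ because it must hold for all overlaps, and $\max_\lambda$ for the sharpest tail bound).

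I expect the main obstacle to be the precise large-deviation bookkeeping in this last step: one must compute the $\mathrm{PotDef}$-reweighted single-test law and the exact means and log-moment-generating functions of $G_{b^*}$ and $L_a$ as functions of $(\nu,\zeta,\epsilon)$; control the mild dependence of the $G_{b^*}$ through $|S|$ and of the $L_a$ through the test partition (together with the concentration of $|S|$ and the lower-order $L_a^{b^*}$ correction); and carry all constants exactly so the resulting exponent matches the closed form $Q$ in~\eqref{formula}, with the choice of $s$ and $\lambda$ balancing the two tails pinning down the $\max_\lambda\min_\zeta$ structure. A secondary but routine point is that restricting attention to swaps that remove a non-defective and insert a defective (rather than all distance-$2$ moves) still yields a valid upper bound on the probability of being a local minimum, since a bad local minimum must in particular survive these swaps.
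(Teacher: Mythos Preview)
Your overall framework---first-moment/union bound stratified by the overlap $\ell$, restriction to swaps that remove a non-defective and insert a missing defective, and absorbing the potential-defectiveness constraint into the entropy term (equivalently, the paper bounds the number $q$ of non-defective potentially defective items via Lemma~\ref{whatsleft} and uses $\binom{k}{\ell}\binom{q}{k'-\ell}$)---matches the paper. The divergence is in the central step, your bound on $\Pr[\sigma_0\text{ is a local minimum}]$. You pass from ``$G_{b^*}\le L_a$ for all $(a,b^*)$'' to $\max_{b^*}G_{b^*}\le\min_a L_a$, introduce a threshold $s$, split into two tail events, and Chernoff each separately. The paper instead uses the one-line relaxation
\[
\Pr\Big[\bigcap_{i,j}\{\Delta(\tau_{ij})\le 0\}\Big]\ \le\ \Pr\Big[\sum_{i,j}\Delta(\tau_{ij})\le 0\Big],
\]
then observes (Lemma~\ref{dist_lemma}) that this aggregate sum decomposes exactly as $\sum_{q=1}^n C_q^{(\ell)}$ with the $C_q^{(\ell)}$ i.i.d.\ across tests, simplifies to the scalar variables $T_q^{(\ell)}$ (Corollary~\ref{cor:prob_bound}), and applies a \emph{single} exponential-tilt bound $\min_{\lambda\ge 0}\bigl(\ex[e^{-\lambda T_1^{(\ell)}}]\bigr)^n$. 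The limiting log-MGF of $T_1^{(\ell)}$ is precisely the argument of the logarithm in~\eqref{formula}, so the formula $Q(\lambda,\zeta,\nu,\epsilon)$ in the statement is \emph{by construction} the exponent produced by this route; the $\max_\lambda$ is the Chernoff tilt and there is no threshold parameter $s$ at all.

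Your max/min--threshold decomposition is a legitimate alternative, but it would produce a different large-deviation exponent (an optimization over $s$ and two separate tilts, of the form $\max_s\min\{(1-\zeta)I_G(s),(1+\epsilon-\zeta)I_L(s)\}$ after the union bound), not the closed form~\eqref{formula}; so the difficulty you flag---``carry all constants exactly so the resulting exponent matches $Q$''---is not just bookkeeping but a structural mismatch. The paper's route also sidesteps the issues you anticipate: there is no need to control the dependence among the $L_a$ or the lower-order $L_a^{b^*}$ corrections, because the per-test contributions $C_q^{(\ell)}$ are genuinely i.i.d., and the analysis is carried out over all $n$ tests unconditionally rather than under the PotDef conditioning.
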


\begin{remark}\label{Pink}
Note that for any $\nu>0,$ $n_{\mathrm{easy} }  =   \lfloor  ( \frac{   \nu \mathrm{e}^{-\nu}   }{\ln2} )^{-1}    \log_2 {p \choose k }   \rfloor $  is the number of tests required for $(1-o(1) )$-approximate recovery via the straightforward algorithms known as Combinatorial Orthogonal Matching Pursuit (COMP) and Definite Defectives (DD).  COMP simply amounts to outputting every item that does not participate in a negative test.  DD amounts to first discarding every item that participates in a negative test, and then  outputting the set of items which have the property that they are the only item in a certain positive test (see also ~\cite{survey}).
\end{remark}

According to   Remark \ref{Pink},  Theorem \ref{local_search_theorem_informal} quantifies the decrease in the number of tests that is possible  (with respect to the requirements of straightforward algorithms like DD and COMP),  while at the same time guaranteeing that the landscape remains smooth enough to not contain any bad local minima. Nonetheless, note that  the final result includes a complicated optimization problem of some large deviation function $ \max_{\lambda \geq 0} \min_{\zeta \in [0,1- \delta )}Q(\lambda, \zeta, \nu,  \epsilon)$, which the bigger it is, the more tests can be saved (and of course it is always non-negative as it can be directly checked that it obtains the value $0$ for $\lambda=0$).

After proper inspection of the properties of the function $Q$, and  exploiting the facts that in Theorem~\ref{local_search_theorem_informal}  we allow $\epsilon$ to be positive and  $\nu$ to take any value of our choice, we manage, as a corollary, to (slightly) improve the state-of-the art  result regarding the number of tests required for efficient approximate recovery under the restriction that \textit{no false-negatives} are allowed. As we have already mentioned, such a requirement is potentially desirable in medical testing.  We state the corollary informally below and formally in Appendix~\ref{undesirable_local}, Corollary~\ref{no_false_negatives}. 

\begin{corollary}[Informal Statement]\label{no_false_negatives_inf}
Let $k, p \in \mathbb{N}$ with $1 \le k \le p$. We assume that $k,p \rightarrow +\infty$ with $k = o(p)$.   Under non-adaptive Bernoulli group testing in which each item participates in a test with probability $\ln(5/2)/k$, there exists a greedy local search algorithm such that, asymptotically almost surely, given as input at least   $\lfloor  1.829  \log_2 \binom{p}{k}\rfloor$  tests, it outputs a set of $ \lfloor 1.01 k \rfloor$ items that is guaranteed to contain every defective item.
\end{corollary}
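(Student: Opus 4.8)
The plan is to derive Corollary~\ref{no_false_negatives_inf} from the formal Theorem~\ref{local_search_theorem} by a specific choice of parameters followed by a certification of the resulting inequality on $R$. Take $\nu=\ln(5/2)$, so that $\mathrm{e}^{-\nu}=2/5$ and hence $\frac{\nu\mathrm{e}^{-\nu}}{\ln 2}=\frac{2\ln(5/2)}{5\ln 2}\approx 0.5288$; take $\epsilon=0.01$, so that $k'=\lfloor 1.01k\rfloor$; and take $\delta>0$ a small fixed constant, to be pinned down by the verification below. Since $\lfloor 1.829\log_2\binom{p}{k}\rfloor$ tests corresponds to $R=1/1.829\approx 0.5467$ in Theorem~\ref{local_search_theorem} (and any larger number of tests only makes $R$, and hence the hypothesis, smaller), the core of the statement reduces to exhibiting such a $\delta$ with
\begin{align*}
\max_{\lambda\ge 0}\ \min_{\zeta\in[0,1-\delta)} Q\big(\lambda,\zeta,\ln(5/2),0.01\big)\ >\ \frac{1}{1.829}-\frac{2\ln(5/2)}{5\ln 2}\ \approx\ 0.018 .
\end{align*}
The value $\nu=\ln(5/2)$ is chosen because, by Remark~\ref{Pink}, COMP/DD require $\big(\frac{\nu\mathrm{e}^{-\nu}}{\ln 2}\big)^{-1}\log_2\binom{p}{k}$ tests, minimized at $\nu=1$ to $\mathrm{e}\ln 2\cdot\log_2\binom{p}{k}\approx 1.884\log_2\binom{p}{k}$, whereas the combined right-hand side $\frac{\nu\mathrm{e}^{-\nu}}{\ln 2}+\max_{\lambda\ge 0}\min_{\zeta\in[0,1-\delta)}Q(\lambda,\zeta,\nu,0.01)$ is maximized at a value of $\nu$ strictly below $1$, and $\ln(5/2)$ is a convenient closed-form point near that maximizer which already yields the improvement $1.829<1.884$.

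The technical heart is the verification of the displayed inequality. First I would unpack the explicit formula~\eqref{formula} for $Q$ at $\nu=\ln(5/2)$, $\epsilon=0.01$ and record its elementary features: continuity in $(\lambda,\zeta)$, the identity $Q(0,\zeta,\cdot,\cdot)\equiv 0$, and the monotonicity/convexity of $\lambda\mapsto Q$ and of $\zeta\mapsto Q$. The key structural claim is that for a well-chosen $\lambda_0$ the map $\zeta\mapsto Q(\lambda_0,\zeta,\ln(5/2),0.01)$ is unimodal on $[0,1)$ — or at least admits a tractable concave lower bound there — with its minimum over $[0,1-\delta)$ attained at an interior point bounded away from $1$; this is precisely the quantitative input that dictates how small $\delta$ may be taken. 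Then I would exhibit an explicit near-optimal $\lambda_0$, obtained from a rigorously checked numerical search, and prove the pointwise bound $Q(\lambda_0,\zeta,\ln(5/2),0.01)>0.018$ for all $\zeta\in[0,1-\delta)$, e.g.\ by reducing it to checking the concave surrogate at its minimizer or by a short interval-arithmetic argument.

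Granting the inequality, Theorem~\ref{local_search_theorem} yields that a.a.s.\ there are no bad local minima among $\lfloor 1.01k\rfloor$-tuples of potentially defective items. The greedy local search — start from an arbitrary such tuple and repeatedly apply any Hamming-distance-two swap that strictly decreases the number of unexplained positive tests, stopping at a local minimum $\sigma$ — then produces, a.a.s., a set $\sigma$ of exactly $\lfloor 1.01k\rfloor$ potentially defective items with $|\sigma\cap\sigma^*|\ge(1-\delta)k$ that explains all but a negligible fraction of the positive tests. The remaining point — upgrading ``almost all defectives'' to ``every defective'' — is the part of the proof of Corollary~\ref{no_false_negatives_inf} that is genuinely more than a substitution: it rests on the observation that any omitted defective $x$ privately explains $\Theta(\log(p/k))$ positive tests (those in which $x$ is the unique defective), so that for $n$ in the stated range swapping a non-defective of $\sigma$ for $x$ strictly decreases the number of unexplained positive tests, contradicting local minimality; one makes this uniform over the $o(k)$ possible omitted defectives. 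Outputting $\hat\sigma:=\sigma$ then gives a set of $\lfloor 1.01k\rfloor$ items with no false negatives a.a.s.

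The step I expect to be the main obstacle is the control of $Q$ near $\zeta=1$. This is the regime of near-perfect configurations — local minima retaining almost all of the defectives — where $Q$ is smallest and the displayed inequality has the least slack, and it is exactly this regime that controls how far $\delta$ can be driven toward $0$, hence whether the no-false-negative conclusion survives. Obtaining a clean uniform lower bound on $\zeta\mapsto Q(\lambda_0,\zeta,\cdot,\cdot)$ up to the boundary, together with the one-dimensional optimization over $\nu$ that pins down the constant $1.829$, is where the real work lies; everything else is bookkeeping with Theorem~\ref{local_search_theorem} and Remark~\ref{Pink}.
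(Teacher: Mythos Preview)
Your overall plan — instantiate Theorem~\ref{local_search_theorem} at $\nu=\ln(5/2)$, $\epsilon=1/100$ and check the resulting inequality on $R$ — is the same as the paper's. But two features of your proposal diverge from the paper's proof in ways worth flagging.

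First, your structural picture of $Q$ is inverted. You conjecture that $\zeta\mapsto Q(\lambda_0,\zeta,\ln(5/2),0.01)$ is unimodal with its minimum at an interior point, and you identify the regime $\zeta\to 1$ as the bottleneck where $Q$ is smallest. The paper proves exactly the opposite: for the specific choice $\lambda=\ln(100/83)$, this function is \emph{increasing} on $[0,1)$ (Lemma~\ref{monotonicity}), so $\min_{\zeta\in[0,1-\delta)}Q=Q(\lambda,0,\nu,\epsilon)$ for every $\delta>0$, and the ``hard'' regime is $\zeta=0$, not $\zeta\to 1$. Once monotonicity is established, the verification collapses to evaluating a single explicit expression at $\zeta=0$, rather than the interval-arithmetic sweep you envision.

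Second, and more substantively, your separate ``upgrade'' step from $(1-\delta)k$ to all $k$ defectives is unnecessary and, as written, not self-contained. The condition $\delta\epsilon>0$ in Theorem~\ref{local_search_theorem} is explained in its proof to mean ``at least one of $\epsilon,\delta$ is positive''; since $\epsilon=1/100>0$, one may take $\delta=0$ directly. Then a $(0,1/100)$-bad local minimum is one missing \emph{any} defective item, and Theorem~\ref{local_search_theorem} already rules these out — so greedy local search terminates at a state containing all defectives with no further argument. Your proposed upgrade (swap in an omitted defective $x$ because it privately explains $\Theta(\log(p/k))$ tests) glosses over the matching lower bound on what the swapped-out non-defective might un-explain; making that rigorous amounts to redoing the $\delta=0$ case of Theorem~\ref{local_search_theorem} by hand.
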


\begin{remark}
The constant 1.01 is chosen for concreteness and exposition purposes and in order to simplify the proof of Corollary~\ref{no_false_negatives_inf}. We note that, with some additional technical effort, the constant can be chosen to be  $1+\epsilon$, for any $\epsilon>0$.
\end{remark}

To put  Corollary~\ref{no_false_negatives_inf}  in to context, let us point out that if $k = \lfloor p^{\alpha} \rfloor, \alpha \in (0,1)$ and we make no assumptions on the value of $\alpha \in (0,1)$, then the best known algorithm (in terms of the  number of tests it requires to succeed) for  
approximate recovery with the guarantee of never introducing false-negative errors is  COMP~\cite{survey}. Indeed, COMP succeeds given as input at least  $ n_{\mathrm{COMP} } \approx \lfloor  1.883 \log_2 {p \choose k }    \rfloor$  tests.

If $\alpha$ is known to be appropriately small then, to the best of our knowledge, the currently best known algorithm  among the ones that with high probability never  introduce false-negative errors is the so-called Separate Decoding algorithm of~\cite{scarlett2018near}. In particular, Separate Decoding requires  as input (asymptotically) at least
\begin{align*}
n_{\mathrm{SP} } =   \min_{ \delta > 0  }  \max \left\{ \frac{1}{(1-\delta)\ln 2}  ,    \frac{ \alpha }{ \left(  (1-\delta) \ln  (1-\delta) + \delta   \right)  (1- \alpha)    \ln 2  }	\right\}  \cdot       \log_2 {p \choose k } 
\end{align*}
tests,  assuming we choose the value of $\nu$ so that it satisfies $(1-\nu/k)^k = 1/2$.  (Note that $\ln 2 \approx 0.693$.)
Thus, if $\alpha$ is sufficiently large, say $\alpha\ge 0.56 $,  then it can be easily checked that the algorithm of  Corollary~\ref{no_false_negatives_inf}  outperforms both COMP and Separate Decoding on the task of approximate recovery with the guarantee of never introducing false-negative errors, with high probability.

As a final remark,  the proof of Theorem~\ref{local_search_theorem_informal} is conceptually straightforward but technically elaborate.  We thus view it as an important first step towards a  rigorous understanding of the optimization landscape. (We stress though that, based on our calculations, we do not expect the complete absence of bad local minima at rates close to the information theoretic threshold. In other words, we conjecture that the use of \textit{stochastic} local search (\textit{positive temperature} local MCMC methods) would be necessary for success at rates close to the information theoretic threshold. We leave the rigorous investigation of the above fact as future work.) The main proof strategy is presented in Appendix~\ref{local_search_proof}, while  the proofs of some of the more technical intermediate lemmas are deferred to Appendix~\ref{omitted_local}.

\subsection{Experimental results: The Smallest Satisfying Set estimator via local search.} 
In our previous results we provided evidence for the absence of OGP when $n \geq (1+\epsilon)k \log_2 \frac{p}{k}$ for every $\epsilon>0$ and showed that for some reasonable values of $\epsilon>0$ the landscape is smooth enough that greedy local search methods work. Naturally, one would like to verify that the OGP prediction is correct and that local search methods can successfully $(1-o(1))$-approximately recover for any $\epsilon>0$. In our last set of results, we fix the Bernoulli group testing design with $\nu$ satisfying $(1-\frac{\nu}{k})^k=\frac{1}{2}$ and we investigate \textit{experimentally} to what extent approximate recovery is indeed amenable to local search.  We discuss our findings here and, more extensively, in Appendix~\ref{simulations}. The key takeaways are the following:
\begin{enumerate}
\item A simple local-search (MCMC) algorithm we propose  is \emph{almost always} successful in solving the optimization task of interest (``minimize unexplained positive tests") to exact optimality when given as input at least $ (1+\epsilon) \lfloor \log_2 {p \choose k} \rfloor$ tests for \textit{any} $\epsilon>0$. (Recall Theorem~\ref{approx_phase}.) 

\item  Based on this observation, we propose an approach for solving the so-called \emph{Smallest Satisfying Set} (SSS) problem via local search. Solving the SSS  problem is a theoretically optimal method  both for exact and  approximate inference, which does \emph{not} require prior knowledge of the parameter $k$. It is typically approached in practice by being modeled as an Integer Program and solved by Branch and Bound methods  and Linear Programming Relaxation techniques, as it is  equivalent to solving  a certain random family of instances of the NP-complete Set Cover problem. (The reader is referred to Chapter 2, Section 2 in~\cite{survey} for more details on the SSS problem.)
Our experiments suggest that the random family of Set Cover instances induced by the non-adaptive Bernoulli group testing problem might in fact be tractable, and this may be of practical interest.

\end{enumerate}

To describe the algorithms we implemented we need the following  definition.

\begin{definition}
A set  $  S \subseteq \{1,2, \ldots, p \} $ is called \emph{satisfying} if:
\begin{enumerate}[(a)]
\item every positive test contains at least one item from $S$;
\item no negative test contains any item from $S$.
\end{enumerate}
\end{definition}
In other words, a set is satisfying if it explains every positive test and none of its elements participates in a negative test.  Clearly, the set of defective items is a satisfying set.

Recalling now Lemma~\ref{key_approx_lemma}, we  see that solving the $k$-Satisfying Set ($k$-SS$)$ problem , i.e.,   the problem of finding a satisfying set of size $k$, guarantees $(1-o(1))$-approximate recovery. (In fact, Lemma~\ref{key_approx_lemma} implies that even a near-optimal solution suffices, but we will not need this extra property for the purposes of this section.)  Based on this observation, we now describe a simple algorithm for solving the $k$-SS problem which is essentially the well-known \emph{Glauber Dynamics} Markov Chain Monte Carlo algorithm.

Glauber Dynamics is a simple local Markov chain that was originally used from statistical physicists to simulate the so-called Ising model (see e.g.~\cite{dobrushin1987completely}). More generally though, Glauber Dynamics is an algorithm designed for sampling from distributions with exponentially large support which has received a lot of attention due to its simplicity and wide applicability, see e.g.~\cite{levin2017markov}. Here we use it for optimization purposes, with the intention to exploit the fact that its stationary distribution assigns the bulk of its probability mass to nearly-optimal states.

Specifically, given a Bernoulli group testing instance, let $\mathrm{PD}$ denote the set of potentially defective items, which recall that is the set of items that do not participate in any negative test. Recall also $\Omega_{k}$ is the \emph{state space} of our algorithm, i.e., the set of all possible subsets of exactly $k$ potentially defective items which  do not belong in any negative test. Finally, for a set ($k$-tuple) $\sigma \in \Omega_{k}$ let  $P(\sigma)$ denote the number of  positive tests explained by $\sigma$. To solve the $k$-SS problem we will use the following simple local algorithm.

\begin{algorithm}
\begin{algorithmic}[1] 
\Procedure{Glauber Dynamics}{$\beta,k,\mathrm{PD}, \mathrm{PosTests}$}

\State  $\sigma \leftarrow$ a random state from $\Omega_{k}$

\State $N \leftarrow  \mathrm{ceil}( 20 |\mathrm{PD} | \ln  |\mathrm{PD} |) $

\For{$i = 1$ to $N$}
\State  Pick an item  $i \in \sigma$ uniformly at random.
\State Pick an item in $j \in \mathrm{PD} \setminus \sigma$ uniformly at random.
\State Let $\tau = (\sigma \cup \{j\} ) \setminus \{i \} $
\State Move to $\tau$  (i.e., $\sigma := \tau$) with probability $ \frac{ \mathrm{e}^{\beta P(\tau)  }  } { \mathrm{e}^{\beta P(\tau)  }  + \mathrm{e}^{\beta P(\sigma)  }  } $ 
\If {$P(\sigma) = |\mathrm{PosTests} |$    }          \Comment We found a $k$-SS
\State \Return $\sigma$
\EndIf
\EndFor
\Return  $\sigma$

\EndProcedure
\end{algorithmic}
\end{algorithm}

\begin{remark}
Note that $\beta$, sometimes called the inverse temperature, is a parameter to be chosen by the user.  In our experiments we choose  $\beta =  5$, but we have noticed that the proposed algorithm is actually quite robust with respect to  the choice of $\beta$.
Note also that we always run our algorithms for at most $ N  =  20 |\mathrm{PD} |  \ln  |\mathrm{PD} |  = O( p \ln p) $ steps, i.e, for a nearly linear number of steps.
\end{remark}

The main outcome of the experiments  we conducted with Glauber Dynamics is that we had an almost perfect (nearly probability one)  success rate with respect to  solving the $k$-SS problem to exact optimality given at least $(1+\epsilon)\log_2 {p \choose k } $ tests as input (see Figure~\ref{Optimization}) for any $\epsilon \geq 0$ we tried. (We emphasize that solving the $k$-SS problem might not imply success with respect to estimating the defective set. Indeed, the solution to $k$-SS implies successful recovery only asymptotically.) The reader is referred to Appendix~\ref{simulations} for more details.

This outcome certainly supports the OGP prediction which, in fact, only suggest that one can solve the problem to near-optimality, since it shows that it is solvable to exact optimality. We also check the performance of the algorithm in terms of exact/approximate recovery. While, as we discussed from a theory standpoint, approximate recovery should be guaranteed for any optimal solution (any satisfying set), in our experiment we do observe a significant but not perfect success in terms of approximate recovery. We believe this is related to the naturally bounded values of $k,p$ we consider (in the order of thousands), since we also observe that the success in the recovery task get increasingly better as we increase $k,p$.  

Motivated by our success in solving to optimality the $k$-SS problem, and as an attempt to check our success without the knowledge of the value of $k$, we investigate next the performance of a local search approach for solving the SSS problem which we describe below.

As the name suggests, the SSS problem amounts to finding the smallest satisfying set \textit{without assuming prior knowledge of $k$}. It is based on the idea that the set of defective items is a satisfying set, but since defectivity is rare, the latter is likely to be small in size compared to the rest satisfying sets. More formally, we have the following corollary of Lemma~\ref{key_approx_lemma} which implies that solving the SSS is a theoretically optimal approach for approximate recovery. 

\begin{corollary}\label{SSS_cool}
Let $k, p \in \mathbb{N}$ with $1 \le k \le p$. We assume that $k,p \rightarrow +\infty$ with $k = o(p)$.  Fix parameters $ \eta,\gamma \in (0,1)$. Assume we observe at least $n = (1+\eta) k \log \frac{p}{k}  $ tests  under  non-adaptive Bernoulli group testing in which each item participates in a test with probability $\nu/k$, where $\nu$ satisfies $(1-\frac{\nu}{k })^{\nu} = \frac{1}{2}$. The smallest  satisfying set contains at least $\lfloor(1-\gamma) k \rfloor$ defective items a.a.s. as $p \rightarrow +\infty.$
\end{corollary}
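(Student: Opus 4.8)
The plan is to reduce Corollary~\ref{SSS_cool} to Lemma~\ref{key_approx_lemma} by showing that, a.a.s., the smallest satisfying set is itself a set of size at most $k$ that satisfies all $n$ tests, and hence is covered by the conclusion of that lemma. Concretely, first observe that the set of defective items $\sigma^*$ is always a satisfying set of size exactly $k$ (as noted right after the definition of satisfying set), so the smallest satisfying set $\hat\sigma$ has $|\hat\sigma| \le k$. In particular $\hat\sigma$ is a set of potentially defective items that satisfies \emph{all} $n$ tests. If $|\hat\sigma| = k$ we can apply Lemma~\ref{key_approx_lemma} directly (it is stated for sets of size exactly $k$, and satisfying all tests trivially implies satisfying at least $(1-\epsilon')n$ of them), concluding that $\hat\sigma$ contains at least $\lfloor (1-\gamma)k\rfloor$ defective items a.a.s. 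If $|\hat\sigma| = k' < k$, I would pad $\hat\sigma$ to a set $\sigma'$ of size exactly $k$ by adding $k-k'$ arbitrary potentially defective items not already in $\hat\sigma$ (such items exist a.a.s. since $|\mathrm{PD}|$ is easily seen to be much larger than $k$ — e.g. every defective item is potentially defective, and with high probability many non-defectives survive the negative-test pruning); the padded set $\sigma'$ still satisfies every positive test (adding items only helps condition (a)) and still avoids negative tests (we only added potentially defective items, condition (b)), so $\sigma' \in \Omega_k$ satisfies all $n$ tests and Lemma~\ref{key_approx_lemma} applies to $\sigma'$. Since $\hat\sigma \subseteq \sigma'$ and the only added items are non-defective or at worst contribute at most $k-k'$ extra defectives, $|\hat\sigma \cap \sigma^*| \ge |\sigma' \cap \sigma^*| - (k-k') \ge \lfloor(1-\gamma')k\rfloor - (k-k')$; one needs to be slightly careful here, but the cleanest route is to choose the padding items to be non-defective, which is possible a.a.s. (again because $|\mathrm{PD} \setminus \sigma^*|$ is large a.a.s.), so that $|\hat\sigma \cap \sigma^*| = |\sigma' \cap \sigma^*| \ge \lfloor(1-\gamma)k\rfloor$.

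To make the padding-by-non-defectives step rigorous I would include a short auxiliary claim: a.a.s.\ the number of non-defective potentially defective items is at least, say, $k$ (in fact it is $\Theta(p)$ or at least polynomial in $p/k$, since each fixed non-defective item participates in no negative test with probability bounded away from $0$, and there are $p-k$ of them). This is a routine first-moment / concentration argument and can be dispatched in a line or two, or simply cited from the COMP/DD analysis in~\cite{survey}, since the set of items output by COMP is exactly $\mathrm{PD}$ and its size is well understood in this regime. Given this claim, the padding is always possible, and the reduction to Lemma~\ref{key_approx_lemma} is immediate with $\delta = \gamma$ and $\epsilon$ chosen so that $(1+\epsilon)k\log_2\frac{p}{k} \le (1+\eta)k\log\frac{p}{k}$; note the hypothesis of the corollary uses the natural logarithm $\log$ while Lemma~\ref{key_approx_lemma} uses $\log_2$, so there is extra slack and any fixed $\eta>0$ suffices to invoke the lemma with some fixed $\epsilon>0$.

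The main (and only real) obstacle is the mismatch in set sizes: Lemma~\ref{key_approx_lemma} is phrased for sets of size \emph{exactly} $k$, whereas the smallest satisfying set could in principle be strictly smaller. The padding argument above resolves this, but it relies on the auxiliary fact that enough non-defective potentially-defective items exist to pad with — which is where a (minor) probabilistic input is needed. Everything else is bookkeeping: monotonicity of the satisfying-set conditions under adding potentially-defective items, and the fact that "satisfies all tests" is a special case of "satisfies at least $(1-\epsilon')n$ tests." I do not expect any delicate second-moment work here — the corollary is essentially a structural consequence of Lemma~\ref{key_approx_lemma} together with the trivial upper bound $|\hat\sigma| \le |\sigma^*| = k$.
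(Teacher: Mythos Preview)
Your proposal is correct and follows essentially the same argument as the paper: observe that the smallest satisfying set has size at most $k$ because $\sigma^*$ is satisfying, pad it to size exactly $k$ with non-defective items, and invoke Lemma~\ref{key_approx_lemma} with $\delta=\gamma$. The paper phrases the padding step as a proof by contradiction and is slightly less explicit than you are about the padding items needing to be potentially defective (and about their availability), but the substance is identical.
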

\begin{proof}
We know that the smallest satisfying set is of size at most $k$, since the set of defective items is satisfying and has size $k$. Applying Lemma~\ref{key_approx_lemma} with parameters $\delta = \gamma$ and  $\epsilon = \eta$, we also know that every other satisfying set of size $k$ contains at least $\lfloor(1- \gamma) k\rfloor$ defective items. So assume that the smallest satisfying set $\sigma$ has $k' < k$ items, and that it contains less than $\lfloor (1-\gamma)k \rfloor$  defective items. Observe now that we can form a new  satisfying set $\sigma'$ by adding $k-k'$ non-defective items to $\sigma$. By construction, $\sigma'$ is a satisfying set of size $k$ that contains less than $\lfloor (1-\gamma)k \rfloor$ items, contradicting Lemma~\ref{key_approx_lemma}, and thus, concluding the proof of the corollary.
\end{proof}

\begin{remark}\label{set_cover}
The SSS problem for a given group testing instance  is equivalent to the following Set Cover instance: Consider an element for each positive test, and a set for each potentially defective item containing every positive test (element) in which it participates.  Finding the smallest satisfying set amounts to finding the smallest in size set that covers every element, i.e., solving the corresponding Set Cover problem.
\end{remark}

Given the Glauber Dynamics algorithm, our approach to solving the SSS is straightforward.  We always start by eliminating every item that is contained in a negative set, so that we are left with the set of potentially defective items. We then use the Glauber Dynamics algorithm as an oracle for solving the $k'$-SS problem for every  $k' \in \{1, \ldots, p\}$ and proceed by the standard technique for reducing optimization problems to feasibility problems via binary search.

In Figure~\ref{Comparison} we present results from simulations suggesting that our approach for solving the SSS problems, besides theoretically optimally in terms of recovery, is efficient and furthermore is able to outperform the other known popular algorithms.


\begin{figure}
 \centering
 \begin{minipage}[b]{0.4\textwidth}
 \includegraphics[width=80mm]{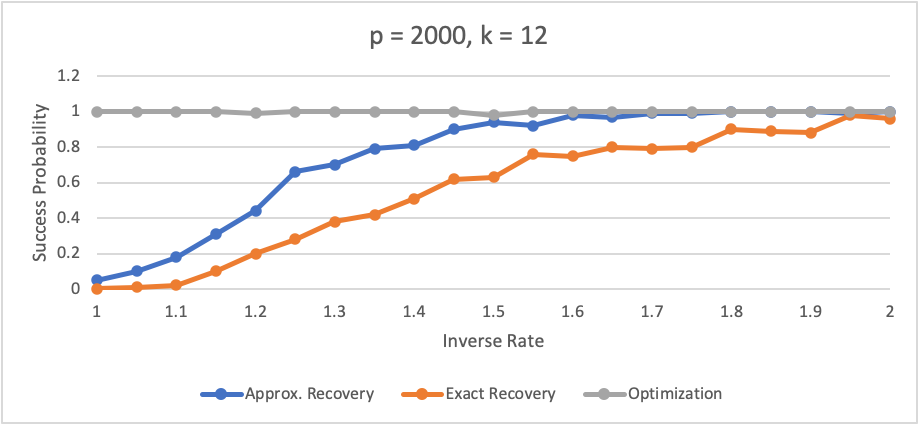}
 \end{minipage}
 \hfill
 \begin{minipage}[b]{0.4\textwidth}
 \includegraphics[width=80mm]{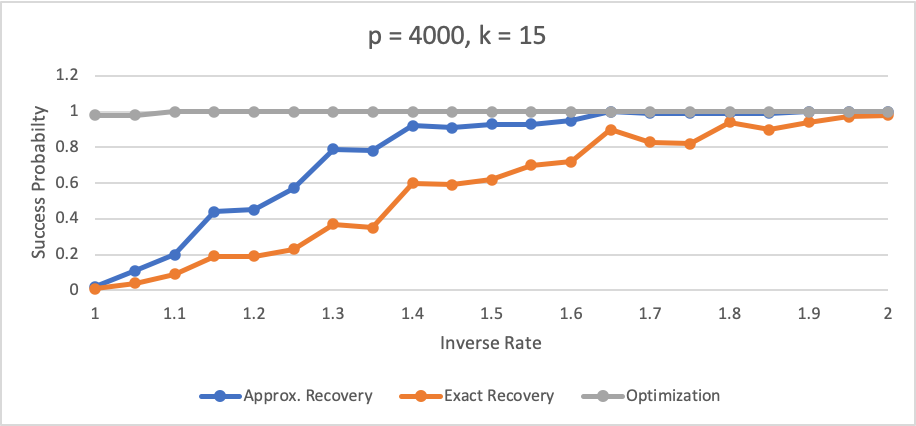}
 \end{minipage}
\hfill
\begin{minipage}[b]{0.4\textwidth}
 \includegraphics[width=80mm]{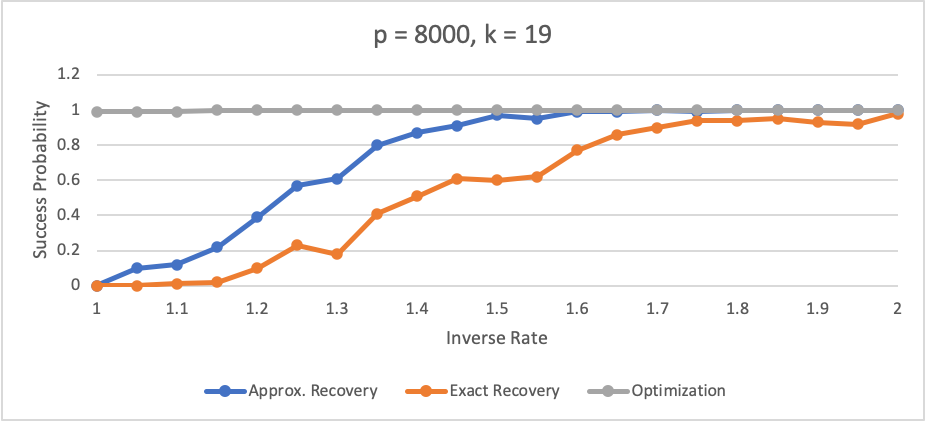}
 \end{minipage}
\caption{ Performance of Glauber Dynamics in solving the $k$-SS problem. The gray  line shows how often the proposed algorithm is able to solve the $k$-SS problem, the orange line represents the probability of exact recovery, while  the blue line  the probability of 90\%-approximate recovery. In  our experiments $k = \lfloor p^{1/3} \rfloor$ and $\nu$ satisfies $(1 - \nu/k)^{ k} = 1/2$ .}
\label{Optimization}
\end{figure}

%
%

\begin{figure}
 \centering
 \begin{minipage}[b]{0.4\textwidth}
 \includegraphics[width=80mm]{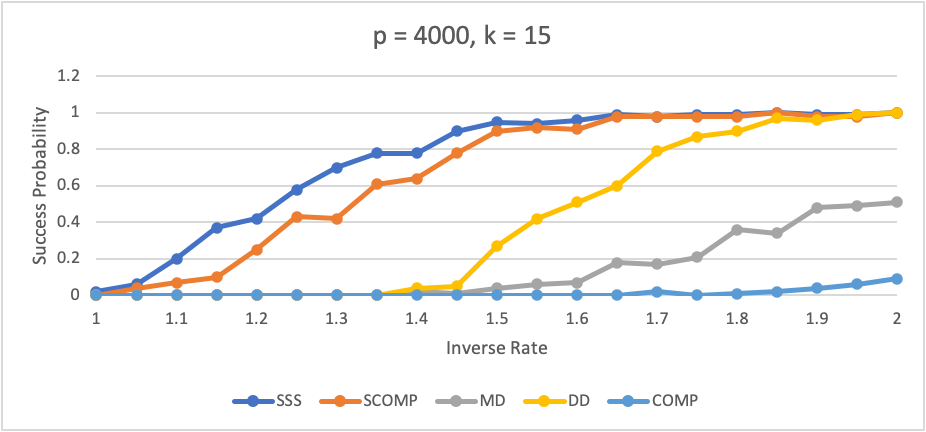}
 \end{minipage}
 \hfill
 \begin{minipage}[b]{0.4\textwidth}
 \includegraphics[width=80mm]{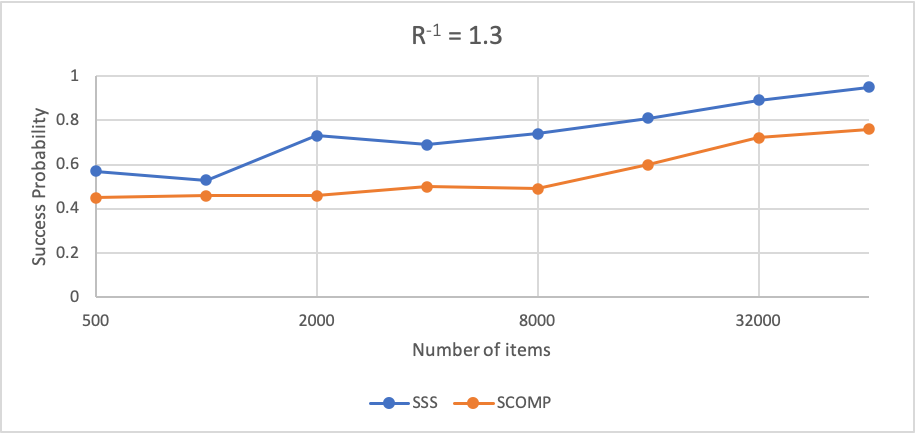}
 \end{minipage}
  \caption{Left: Comparison of the  performances of the SSS (implemented via local search),  SCOMP, COMP, DD and MD algorithms in the   $90\%$-approximate recovery task. Here $k = \lfloor p^{1/3} \rfloor$ and $\nu$ satisfies $(1 - \nu/k)^{k} = 1/2$. 
Right: Comparison of the performances of the SSS (implemented via local search) and SCOMP algorithm in the $90\%$-approximate recovery task as the number of items $p$ increases,  $k = \lfloor p^{1/3} \rfloor$, $\nu$ satisfies $(1- \nu/k)^{k} = 1/2$,  and the rate is fixed at $R = (1.3)^{-1}$.  Note that the horizontal axis is in logarithmic scale.}
    \label{Comparison}
\end{figure}

%
%

\newpage
\bibliographystyle{plain}
\bibliography{group}

\appendix

\section{Organization of the appendices} 

The appendices are organized as follows. In Appendix~\ref{formal_statement} we formally statement our theoretical results, Theorems~\ref{first_moment_calc},~\ref{monotonicity_theorem} and~\ref{local_search_theorem}.   In Appendix~\ref{simulations} we  present our experimental results.  In Appendix~\ref{OGP_proof} we prove Theorem~\ref{first_moment_calc}, Theorem~\ref{monotonicity_theorem} and Theorem~\ref{thm:OGP}.  In   Appendix~\ref{local_search_proof} we prove Theorem~\ref{local_search_theorem}.  Several technical proofs are deferred to Appendices~\ref{key_approx_lemma_proof},~\ref{omitted_formal},~\ref{omitted_local},~\ref{CorProof}.

\section{Formal statement of theoretical results}\label{formal_statement}

In this section we formally present our theoretical results.

\subsection{The Model}
We start with formally defining the inference model of interest. There are $p$ items, $k$ out of them are defective and chosen uniformly at random among the total $p$ items. We denote by $\theta^* \in \{0,1\}^p$ the indicator vector of the defective items. We perform $n$ tests, where at each test each  item is independently included in the test with probability $\nu/k$, where $\nu>0$ is some positive constant. In most cases we consider $\nu$ satisfying $(1- \frac{\nu}{k})^{ k} =  \frac{1}{2}$, as informed by the first part of Theorem~\ref{approx_phase}, but in some cases we choose some other more suitable value of $\nu$ in which case we clearly state it. For $i=1,\ldots,n$ we call $X_i \in \{0,1\}^p$ the indicator vector corresponding to the  subset of the items chosen  for the $i$-th test and $Y_i=\mathbbm{1}(\langle X_i, \theta^* \rangle \ge 1)$ the binary outcome of the $i$-th test. We also define the test matrix $X \in\{0,1\}^{n \times p}$ the matrix with rows $X_i, i=1,\ldots,n$ and the test outcomes vector $Y \in \{0,1\}^n$ the vector with elements $Y_i, i=1,\ldots,n$. Finally, we define by \begin{equation} \label{eq:rate} R = \frac{ \log_2 { p \choose k }  }{ n } \end{equation} a quantity we call the \emph{rate}. With respect to scaling purposes we assume that as $p \rightarrow +\infty$, $k,n \rightarrow +\infty $ under the restrictions that $R$ is kept equal to  a fixed constant and that $k=o(p)$.

The Bayesian task of interest is to asymptotically approximately recover the vector $\theta^*$ given access to $(Y,X),$ i.e., to construct an estimator $\hat{\theta}=\hat{\theta}(Y,X) \in \{0,1\}^p$ with \begin{equation}\label{approx_rec}
d_H(\hat{\theta},\theta^*)=o(k),
\end{equation}asymptotically almost surely (a.a.s.), with respect to the randomness of the prior on $\theta^*$ and the test matrix $X$, as $p \rightarrow +\infty.$  Here $d_H( \cdot, \cdot) $ denotes the Hamming distance between two binary vectors.  Note also that achieving~\eqref{approx_rec} with an estimator of sparsity $k$ implies $(1-o(1))$-approximate recovery in the sense of Definition~\ref{approx_rec_def}. Importantly, for our theoretical results, we work under the assumption that $k$ is known to the statistician.

As explained in the Introduction (Theorem \ref{approx_phase}), using the introduced notation of the rate, the recovery task of interest is known to be \textit{information-theoretic impossible} if $R>1$ and \textit{information-theoretic possible} if $R<1$. Specifically, if $R<1$ one can use the Bernoulli design with $\nu$ satisfying $(1-\frac{\nu}{k})^k=\frac{1}{2}$ and then use exhaustive search to find a satisfying set. Furthermore if $R<\frac{\nu e^{-\nu}}{\ln 2}$ for some $\nu>0$ one can recover by using the Bernoulli design with parameter $\nu>0$ and then run the polynomial-time algorithm COMP which simply outputs items that never participated in any negative test. (Recall Remark~\ref{Pink}.) Naturally, we would like to know  for which rates $\frac{\nu e^{-\nu}}{\ln 2}<R<1$ we can recover approximately (in the sense of \eqref{approx_rec}) using a computationally efficient estimator, the Bernoulli design with parameter $\nu$ — and in particular how close we can get to $R=1$.
  Of particular interest is, of course, the case where $\nu$ satisfies $(1-\frac{\nu}{k})^k=\frac{1}{2}$ where approximate recovery in the sense of \eqref{approx_rec} is known to be information-theoretically possible for all $R<1.$




\subsection{Absence of the Overlap Gap Property}\label{absence_ogp}

As we have already mentioned, our first and main theorem provides first moment evidence for the absence of the Overlap Gap Property (OGP) asymptotically up to the information theoretic threshold $R=1$ for the task of $(1-o(1) )$-approximate recovery. We propose this evidence as a potentially fundamental reason for computational tractability in this regime. For this section, and towards this goal, we fix the Bernoulli design for parameter $\nu>0$ with $(1-\frac{\nu}{k})^k=\frac{1}{2}$.  Recall that the optimal estimator in this Bernoulli design, which works for all $R<1$, corresponds to finding an (approximately) satisfying set.

Towards understanding the computational difficulty of finding an (approximately)  satisfying set in this context we first apply, as a pre-processing step, the COMP algorithm which is known to achieve  $(1-o(1))$-approximate recovery for all $R< \frac{1}{2}$ and for the value of $\nu$ we chose. 
Indeed, we know this is true for $R<\frac{ \nu \mathrm{e}^{-\nu} }{ \ln 2 }$ and then notice that as $k \rightarrow +\infty, \nu=(1+o(1))\ln 2$ and therefore it holds $\frac{ \nu \mathrm{e}^{-\nu} }{ \ln 2 }=\frac{1+o(1)}{2}.$
For this reason when $R<\frac{1}{2}$ the recovery goal is trivially achieved by this polynomial-time preprocessing step. Hence, in what follows we assume $R$ to satisfy $\frac{1}{2}<R<1$ (we ignore the case where $R=\frac{1}{2}$ to avoid any unnecessary criticality issues, and focus solely on higher rates than $\frac{1}{2}$).

Now COMP simply removes from consideration any item that takes part in a negative test. We call any item that is  \emph{not} removed from consideration after this step as \emph{potentially defective}. The following two lemmas estimate the number of positive tests
and the number  of potentially defective items.  The proofs  of Lemmas~\ref{number_positive},~\ref{pi_prime_estimation} can be found in Appendix~\ref{omitted_formal}. (Note that these are well-known statements that have been implicitly proven in previous works, but we chose to include their proof in this paper as well for completeness.)

\begin{lemma}\label{number_positive}
Let $\mathcal{P} \subseteq \{1, \ldots, n \}$ denote the set of indices of the positive tests. For every constant $\eta \in (0,1)$:
\begin{align}\label{positive_bounds}
(1-\eta) \frac{n}{2}    \le  |\mathcal{P} | \le (1+\eta) \frac{n}{2},
\end{align}
asymptotically almost surely as $p \rightarrow +\infty$.
\end{lemma}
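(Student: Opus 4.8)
The plan is to express $|\mathcal{P}|$ as a sum of indicator random variables and apply a concentration argument. For each test $i \in \{1,\dots,n\}$, let $Z_i = Y_i = \mathbbm{1}(\langle X_i, \theta^* \rangle \ge 1)$ be the indicator that test $i$ is positive. First I would compute the probability that a fixed test is positive: a test is negative exactly when none of the $k$ defective items is included, and since each item is included independently with probability $\nu/k$, we get $\Pr[Z_i = 0] = (1-\nu/k)^k$. By the choice of $\nu$ satisfying $(1-\nu/k)^k = \tfrac12$, this gives $\Pr[Z_i = 1] = \tfrac12$ exactly, hence $\mathbb{E}[|\mathcal{P}|] = \sum_{i=1}^n \Pr[Z_i=1] = \tfrac{n}{2}$. (If one instead uses $\nu = \ln 2$ as in some statements, one gets $(1-\nu/k)^k = \tfrac12 + o(1)$, which only shifts the mean by a $1+o(1)$ factor and is absorbed into $\eta$; this is the point of Remark~\ref{toso_antikeimenika}.)

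Next I would establish concentration. The key observation is that the events $\{Z_i = 1\}$ are \emph{mutually independent} across $i$, because the randomness in distinct tests (the rows $X_i$ of the test matrix) is independent, and $\theta^*$ is fixed. Therefore $|\mathcal{P}| = \sum_{i=1}^n Z_i$ is a sum of $n$ i.i.d.\ Bernoulli$(\tfrac12)$ random variables, and a standard Chernoff bound yields, for any fixed $\eta \in (0,1)$,
\begin{align*}
\Pr\left[\, \big| |\mathcal{P}| - \tfrac{n}{2} \big| > \eta \tfrac{n}{2} \,\right] \le 2\exp\!\left(-\tfrac{\eta^2}{3}\cdot \tfrac{n}{2}\right).
\end{align*}
Since $n \to +\infty$ as $p \to +\infty$ (recall $n = \log_2\binom{p}{k}/R$ with $R$ a fixed constant and $k,p\to\infty$), the right-hand side tends to $0$, which establishes~\eqref{positive_bounds} asymptotically almost surely.

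There is essentially no serious obstacle here; the lemma is a routine Chernoff-bound computation. The only mild subtlety worth spelling out is the conditioning: one should be slightly careful that the bound is stated conditionally on $\theta^*$ having exactly $k$ ones (which holds by construction of the prior), so that $\Pr[Z_i = 1]$ is genuinely $1-(1-\nu/k)^k$ for every realization of $\theta^*$; since this probability does not depend on \emph{which} $k$ coordinates are defective, the conclusion in fact holds for every fixed $\theta^*$ with $\|\theta^*\|_0 = k$, and hence unconditionally. This takes care of any dependence between the prior on $\theta^*$ and the test matrix $X$, which in fact there is none. A union bound over the finitely many relevant events is not even needed since we only need the single two-sided deviation bound.
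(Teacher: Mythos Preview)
Your proposal is correct and follows essentially the same approach as the paper: compute $\Pr[Z_i=1]=1-(1-\nu/k)^k$, note independence across tests, and apply a multiplicative Chernoff bound to conclude. The paper's proof is terser but identical in substance.
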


\begin{lemma}\label{pi_prime_estimation}
Suppose $\frac{1}{2}<R<1$. Let $\mathrm{PD}$ denote the set of potentially defective items. For every constant $\eta \in (0,1)$:
\begin{align}
(1-\eta)  p  \left(  \frac{k}{p}	\right)^{ \frac{1+ \eta}{ 2R}  }       \le | \mathrm{PD} |  \le  (1+\eta) p  \left( \frac{k}{p} \right)^{  \frac{ 1-\eta }  {2R }   } \label{piprime_bounds} 
\end{align}
asymptotically almost surely as $p \rightarrow +\infty$.
\end{lemma}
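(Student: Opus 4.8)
The plan is to compute $\mathbb{E}|\mathrm{PD}|$ exactly and then concentrate. Fix a non-defective item $v$ (there are $p-k = (1-o(1))p$ of them, and we will see the defectives contribute a negligible additive $k = o(p^{\text{anything positive}})$... actually they are always potentially defective, so $|\mathrm{PD}| = k + |\{\text{non-defective potentially defective}\}|$, and since $k = o(p)$ and the bound we want is $p(k/p)^{\Theta(1/R)}$ with $1/(2R) < 1$, the term $k$ is lower order and can be absorbed). For a fixed non-defective $v$, it is potentially defective iff it appears in no negative test. Condition on $\theta^*$. A given test is negative with probability $q := (1-\nu/k)^k = \tfrac12$ (exactly, by our choice of $\nu$), independently across tests; and conditioned on a test being negative, $v$ is still included in it with the appropriate conditional probability. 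Cleaner: the probability that test $i$ is negative \emph{and} contains $v$ equals $\Pr[v \in \text{test } i]\cdot\Pr[\text{no defective in test }i] = \frac{\nu}{k}(1-\nu/k)^k$ (since $v$ is non-defective, its inclusion is independent of the defectives' inclusions). Hence the probability that test $i$ does \emph{not} witness $v$'s non-defectiveness (i.e. is not a negative test containing $v$) is $1 - \frac{\nu}{k}(1-\nu/k)^k = 1 - \frac{\nu}{k}\cdot\frac12$. Therefore
\[
\Pr[v \in \mathrm{PD}] \;=\; \left(1 - \tfrac{\nu}{2k}\right)^{n} \;=\; \exp\!\left(-\tfrac{\nu}{2k}\,n\,(1+o(1))\right),
\]
using $\ln(1-x) = -x(1+o(1))$ for $x = \Theta(1/k)\to 0$. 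Now substitute $n = \frac{\log_2\binom{p}{k}}{R}$ and $\nu = (1+o(1))\ln 2$ (from $(1-\nu/k)^k = \tfrac12$); using $\log_2\binom{p}{k} = (1+o(1))\, k\log_2\frac{p}{k}$ for $k = o(p)$, we get $\frac{\nu}{2k} n = (1+o(1))\frac{\ln 2}{2}\cdot\frac{1}{R}\cdot\log_2\frac{p}{k} = (1+o(1))\frac{1}{2R}\ln\frac{p}{k}$. Hence $\Pr[v\in\mathrm{PD}] = \exp(-(1+o(1))\frac{1}{2R}\ln\frac pk) = (p/k)^{-(1+o(1))/(2R)}$, so $\mathbb{E}|\mathrm{PD}| = (1+o(1))\, p\,(k/p)^{(1+o(1))/(2R)}$, which matches the claimed window once the $o(1)$ in the exponent is turned into the explicit $\pm\eta$ slack (and after noting $(k/p)^{o(1)}$ sits inside $[(k/p)^{\eta/(2R)}, (k/p)^{-\eta/(2R)}]$ eventually).

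For concentration I would use a second-moment / Chebyshev argument on $|\mathrm{PD}|$. The events $\{v\in\mathrm{PD}\}$ for distinct items $v$ are \emph{not} independent — they are positively correlated through the shared randomness of which tests are negative — so I need to bound $\mathrm{Cov}(\mathbbm{1}[v\in\mathrm{PD}],\mathbbm{1}[w\in\mathrm{PD}])$. For two non-defectives $v,w$: test $i$ ``kills'' neither $v$ nor $w$ with probability $1 - \Pr[\text{test }i\text{ negative and contains }v\text{ or }w] = 1 - (1-\nu/k)^k\cdot(2\tfrac{\nu}{k} - (\tfrac{\nu}{k})^2) = 1 - \tfrac{\nu}{k} + O(1/k^2)$, so $\Pr[v,w\in\mathrm{PD}] = (1 - \tfrac{\nu}{k} + O(k^{-2}))^n$. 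Comparing with $\Pr[v\in\mathrm{PD}]\Pr[w\in\mathrm{PD}] = (1-\tfrac{\nu}{2k})^{2n} = (1 - \tfrac{\nu}{k} + \tfrac{\nu^2}{4k^2})^n$, the ratio is $\big(1 + O(k^{-2})/(1-\nu/k)\big)^n = \exp(O(n/k^2)) = \exp(O(\log(p/k)/k)) = 1 + o(1)$. Thus $\mathrm{Var}|\mathrm{PD}| \le \mathbb{E}|\mathrm{PD}| + o(1)\,\mathbb{E}|\mathrm{PD}|^2$, and since $\mathbb{E}|\mathrm{PD}| \to \infty$ (because $(k/p)^{1/(2R)}$ with $R < 1$... wait, need $p\cdot(k/p)^{1/(2R)}\to\infty$: with $k = p^\alpha$ this is $p^{1 - (1-\alpha)/(2R)}$, positive exponent iff $R > (1-\alpha)/2$, which holds since $R > 1/2 > (1-\alpha)/2$), Chebyshev gives $|\mathrm{PD}| = (1+o(1))\mathbb{E}|\mathrm{PD}|$ a.a.s.

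The main obstacle is bookkeeping the $(1+o(1))$ factors sitting in the \emph{exponent} $1/(2R)$ and converting them faithfully into the two-sided $\pm\eta$ bounds of \eqref{piprime_bounds}; in particular one must be careful that $\nu = (1+o(1))\ln 2$ and $\log_2\binom{p}{k} = (1+o(1)) k\log_2(p/k)$ each perturb the exponent, and that $(k/p)^{o(1)} \le (k/p)^{-\eta/(2R)}$ only for $p$ large enough (and similarly for the lower bound), so the a.a.s. statement is genuinely asymptotic. A secondary technical point is handling the defective items cleanly — they are deterministically in $\mathrm{PD}$, contributing exactly $k$, which is $o(\mathbb{E}|\mathrm{PD}|)$ precisely because $p(k/p)^{1/(2R)} = \omega(k)$ in the regime $R > 1/2$; I would state this as a one-line remark rather than belabor it. Everything else is a routine first/second moment computation.
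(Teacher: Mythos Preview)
Your approach is correct and genuinely different from the paper's. The paper first invokes Lemma~\ref{number_positive} to pin down $n_{\mathrm{neg}}$ within a $(1\pm\delta)$ window, then \emph{conditions} on the realized value $n_{\mathrm{neg}}=N_{\mathrm{neg}}$. Conditionally on which tests are negative, the events $\{v\in\mathrm{PD}\}$ for non-defective $v$ become mutually independent (each non-defective item independently avoids all $N_{\mathrm{neg}}$ negative tests), so the paper can apply a Chernoff bound directly to $p'-k$. The exponent $1/(2R)$ then emerges from the chain $(1-\nu/k)^{N_{\mathrm{neg}}}$ with $N_{\mathrm{neg}}\approx n/2$ and $n=\log_2\binom{p}{k}/R$.

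Your route bypasses the conditioning entirely: you compute $\Pr[v\in\mathrm{PD}]=(1-\tfrac{\nu}{2k})^n$ unconditionally, then handle the resulting dependence among the indicators $\mathbbm{1}[v\in\mathrm{PD}]$ via a second-moment (Chebyshev) argument, checking that the pairwise covariance is $o(\Pr[v\in\mathrm{PD}]^2)$ because the per-test ratio is $1+O(k^{-2})$ and $n/k^2\to 0$. This is arguably more self-contained (no appeal to Lemma~\ref{number_positive}) and conceptually cleaner, at the cost of a covariance computation and only polynomial (Chebyshev) rather than exponential (Chernoff) concentration—which is of course immaterial for an a.a.s.\ statement. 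Your verification that $k=o(\mathbb{E}|\mathrm{PD}|)$ reduces exactly to $R>\tfrac12$ is the right observation, and the bookkeeping of the $o(1)$ perturbations in the exponent that you flag is indeed the only tedious part; both proofs have to absorb these into the $\pm\eta$ slack.
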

\noindent

\begin{remark}\label{conditioning_remark}
In the light of Lemmas~\ref{number_positive},~\ref{pi_prime_estimation}, in what follows we condition on the values of the random variables $\mathcal{P}$ and   $\mathrm{PD} $ and we  assume  that their cardinalities, which we denote by $n_{\mathrm{pos} }$ and $p'$ respectively, satisfy~\eqref{positive_bounds} and~\eqref{piprime_bounds}, respectively for a sufficiently small constant $\eta$ of interest. We also slightly abuse the notation and denote by $\theta^* \in \{0,1\}^{p'}$ the indicator vector of the defective items, by $X_i \in \{0,1 \}^{p'}$  the row vector corresponding  to the positive test with index $i \in \mathcal{P}$,  whose non-zero entries indicate the potentially defective items that participate in this test, and we switch to indexing with $p'$.
\end{remark}

Note that after the execution of COMP we have in consideration $n_{\mathrm{pos}}$ positive tests and $p'$ potentially defective items. At this point, and recalling Lemma~\ref{key_approx_lemma}, the task of finding an (approximately)  satisfying set of cardinality $k$ corresponds to finding a set of $k$ items out of the $p'$ which explain all but a vanishing fraction of the positive tests (note that here we used that the number of  positive tests is of the same order as the number of total number of tests a.a.s. — Lemma \ref{positive_bounds}).   Given a $k$-sparse $\theta \in \{0,1\}^{p'}$ we say that a positive test $i \in \mathcal{P}$ is \emph{unexplained} by $\theta$ if  $ \langle X_i, \theta \rangle   = 0$, and by $H(\theta)$ we denote the number of unexplained tests with respect to $\theta$, i.e.
\begin{align*}
H(\theta):=|\{ i \in \mathcal{P}: \langle X_i, \theta \rangle   = 0\}|.
\end{align*}
We sometimes call $H(\theta)$ the \emph{energy} at $\theta$, motivated by the statistical physics context where OGP originates from.
Notice that $\theta$ corresponds to an (approximately) satisfying set if and only if it holds ($H(\theta)=o(n)$) $H(\theta)=0$. Using this notation, to find an (approximately) satisfying set it suffices to solve to near-optimality the following optimization problem

\begin{align*}
\begin{array}{clc} \left(\Phi\right) & \min  & H(\theta) \\ &\text{s.t.}&\theta \in \{0,1\}^{p'}  \\
&& \|\theta\|_0=k.
\end{array}
\end{align*}
We make two observations. First the optimal value of $(\Phi)$ is clearly zero and achieved by $\theta^*$. Second, again by Lemma~\ref{key_approx_lemma}, the level of near-optimality which suffices for approximate recovery corresponds to the $\theta$ with $H(\theta)=o(n)$.

As explained in the Introduction, inspired by statistical physics, and the successful prediction of the computational thresholds at least in the context of the sparse regression~\cite{gamarnik2017high} and of the planted clique problems~\cite{gamarnik2019landscape}, we study the presense/absence of OGP in the landscape of $(\Phi)$ to offer a heuristic understanding of the rates in $R \in (\frac{1}{2},1)$ for which the optimization problem can be solved to near-optimality, or equivalently $\theta^*$ can be approximately recovered via local methods. As explained also in the Introduction, we provide evidence that the OGP never appears for any $R \in (\frac{1}{2},1)$.

The OGP informally says that the space of near-optimal solution of $(\Phi)$ separate into two disjoint clusters, one corresponding to $\theta$ which are ``close" to $\theta^*$ (the high overlap cluster) and one corresponding to $\theta$ which are ``far" to $\theta^*$ (the low overlap cluster). We now formally define it for any fixed $p',k.$ 
\begin{definition}[$(\zeta_{p'},W_{p'},H_{p'})$-OGP]\label{def:OGP} Fix some $W_{p'} \in \{0,1,\ldots,k-1\},$ $\zeta_{p'} \in \{ \lfloor \frac{k^2}{p'} \rfloor ,\lfloor \frac{k^2}{p'} \rfloor+1,\ldots,k-W_{p'}-1\}$ and some $H_p'>0$. 
 We say that    $(\Phi)$ exhibits the $(\zeta_{p'},W_{p'},D_{p'})$-Overlap Gap Property 
($(\zeta_{p'},W_{p'},H_{p'})$-OGP) if there exists a threshold value $r_{p'} \in \mathbb{R}$ satisfying the following properties.
\begin{itemize}
\item[(1)] For any $\theta \in \{0,1\}^{p'}$ with $\|\theta\|_0=k$ and $H(\theta)  \leq r_{p'}$, it holds that either $\langle \theta,\theta^* \rangle \leq \zeta_{p'}$ or $\langle \theta,\theta^* \rangle \geq \zeta_{p'}+W_{p'}+1.$
\item[(2)] There exist $\theta_1,\theta_2 \in \{0,1\}^{p'}$ such that  $\|\theta_1\|_0=\|\theta_2\|_0=k$, $\langle \theta_1,\theta \rangle \leq \zeta_{p'},$ $\langle \theta_2,\theta \rangle \geq \zeta_{p'}+W_{p'}+1$ and  $\min \{ H(\theta_1),  H(\theta_2) \} \leq r_{p'}$;
\item[(3)] It holds $\max_{\theta \in \{0,1\}^{p'}: \|\theta\|_0=k, \zeta_{p'}+1 \leq \langle \theta,\theta^* \rangle \leq \zeta_{p'}+W_{p'}-1} H(\theta) \geq r_{p'}+H_{p'}.$
\end{itemize} 
\end{definition}

Let us provide some intuition on the definition of the OGP, as it slightly generalizes the previous definitions used for the Overlap Gap Property in the literature~\cite{gamarnik2017high,gamarnik2019landscape,gamarnik2019sparse}, since we define it adjusted to the task of \textit{approximate recovery}. The parameter $W_{p'}$ corresponds to \textit{the width} of the OGP and the parameter $H_{p'}$ to \textit{the height} of the OGP. The first condition says that all solutions $\theta$ achieving energy $H(\theta)$ less than some threshold value $r_{p'}$ must either have dot product (overlap) with $\theta^*$ less than $\zeta_{p'}$ or bigger than $\zeta_{p'}+W_{p'}.$ This gives rise to two clusters of near-optimal solutions (in the sense of achieving energy at most $r_n$) whose overlap with $\theta^*$ differs by at least $W_{p'}.$ The second condition makes sure that the two clusters are non-empty. Now, the third condition implies that there is some energy level achieved by some $\theta$ with overlap with $\theta^*$ strictly between $\zeta_{p'}$ and $\zeta_{p'}+W_{p'}$  and energy higher by at least the height $H_{p'}>0,$ as compared to all energies achieved by solutions in the two clusters. Finally, notice that the requirement $\zeta_{p'} \geq \lfloor \frac{k^2}{p'} \rfloor$ holds, as one can always achieve overlap with $\theta^*$ of the order $(1+o(1))\lfloor \frac{k^2}{p'} \rfloor$, by simply choosing a binary $k$-sparse vector at random, and therefore an OGP for smaller overlap sizes than it is not relevant for recovery, but it is definitely relevant for all overlaps bigger than $\lfloor \frac{k^2}{p'} \rfloor.$

A \textit{highly informal}, yet surprising accurate in certain contexts, computational prediction is that local search algorithms attempting to solve $(\Phi)$ are able to find in polynomial in $p'$ time a $\theta$ with $\langle \theta,\theta^* \rangle > \zeta_{p'}$ if and only if for all width levels $W_{p'} \in \{1,\ldots,k-\zeta_{p'}-1\}$ with $W_{p'}=\omega(1)$ and height levels $H_{p'}>0$ with  $H_{p'}=\omega(\log p')$ the $(\Phi)$ does not exhibit the $(\zeta_{p'},W_{p'},H_{p'})$-OGP a.a.s. as $p \rightarrow +\infty.$ The heuristic intuition behind this prediction is as follows. If two clusters of near-optimal solutions of $(\Phi)$ are separated by a growing width $W_{p'}=\omega(1)$ and height $H_{p'}=\omega(\log p')$  then no local search algorithm is able to either ``jump across" the clusters by tuning the local search radius at Hamming distance $W_{p'}$ from its current state (as this would take $\binom{k}{W_{p'}}\binom{p'-k}{W_{p'}}=e^{\Omega(W_{p'} \log p')}$-time), or use natural MCMC methods such as Glauber dynamics with finite temperature to ``jump over" the height $H_{p'}$ (as this would normally require $e^{H_{p'}}=e^{\omega(\log p')}$-time). The prediction now is that unless such an OGP appears, an appropriate local search algorithm works.

We now provide evidence that for all $\frac{1}{2}<R<1$ such an OGP indeed does not take place in $(\Phi)$ when $\zeta_{p'} \leq (1-\epsilon)k,$ for arbitrarily small fixed $\epsilon>0.$ Such a result suggests that local search methods can obtain overlap $(1-\epsilon)k$,   i.e. achieve $(1-\epsilon)$-approximate recovery, in polynomial-time. We first define the following restricted optimization problems.

\begin{definition}\label{phidef}
For $\ell=0,1,2,\ldots,k$, let $\phi(\ell)$ denote the optimal value of the optimization problem
\begin{align*}\begin{array}{clc} 
\left(\Phi\left(\ell\right)\right) & \min  &H(\theta) \\ &\text{s.t.}&\theta \in \{0,1\}^{p'}  \\
&& ||\theta||_0=k, \langle \theta,\theta^* \rangle =\ell.
\end{array}
\end{align*} 
\end{definition}
Note that $\Phi(\ell)$ is simply $\Phi$ constrained on only $\theta$ satisfying $\langle \theta,\theta^* \rangle =\ell.$  Of course $\theta^*$ is not known to the statistician, and $\Phi(\ell)$ are considered solely for analysis purposes. Trivially, as $\ell$ spans all possible values of $\langle \theta,\theta^* \rangle$, $\min_{\ell=0,1,\ldots,k} \phi(\ell)=H(\theta^*)=0.$ Furthermore,  for $R<1$ by Lemma~\ref{key_approx_lemma} it holds $\arg \min_{\ell =0,1,\ldots,k} \phi(\ell)=(1-o(1))k$ a.a.s.

Now we offer a necessary implication of the existence of OGP in terms of the monotonicity of $\phi(\ell)$, which allows us to argue for its abscence in what follows. As mentioned in the introduction, such links between OGP and the monotonicity of $\phi(\ell)$ have appeared in the literature but the exact lemma below is not known, to the best of our knowledge. The proof of Lemma~\ref{lem:monot} can be found in Appendix~\ref{omitted_formal}.
\begin{lemma}\label{lem:monot}
Let  $\zeta, M,W \in \{0,1,\ldots,k\}, H>0$ with $M+W \leq k$.  If $\Phi$ satisfies the $(\zeta,W,H)$-OGP for some $\zeta \leq M$,  then there exists  $u \in \{0,1,\ldots,W-1\}$, such that $\phi(\ell W+u)$ is not non-increasing for $\ell=0,1,2,\ldots,\lfloor M/W \rfloor.$
\end{lemma}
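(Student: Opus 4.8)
The plan is to argue by contradiction: suppose that for every $u\in\{0,1,\dots,W-1\}$ the finite sequence $\ell\mapsto \phi(\ell W+u)$ is non-increasing on $\ell\in\{0,1,\dots,\lfloor M/W\rfloor\}$. Unwinding this, it is equivalent to: for any two overlap values $a<b$ with $a\equiv b\pmod W$ and $\lfloor b/W\rfloor\le \lfloor M/W\rfloor$ we have $\phi(a)\ge\phi(b)$; call this property $(\star)$. I will contradict $(\star)$ using the three clauses of the $(\zeta,W,H)$-OGP (Definition~\ref{def:OGP}), the hypothesis $\zeta\le M$, and the standing constraint $\zeta\le k-W-1$ built into Definition~\ref{def:OGP}. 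We may also assume the OGP threshold $r=r_{p'}$ satisfies $r\ge 0$, since otherwise clause~(2) fails outright (all energies $H(\cdot)$ are nonnegative) and there is nothing to prove.

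The next step is to read off two facts. From clause~(1), every $\theta$ of Hamming weight $k$ whose overlap $\langle\theta,\theta^*\rangle$ lies in the \emph{band} $B:=\{\zeta+1,\dots,\zeta+W\}$ satisfies $H(\theta)>r$; minimizing over such $\theta$ gives $\phi(\ell)>r$ for every $\ell\in B$. From clause~(2) there are weight-$k$ vectors $\theta_1,\theta_2$ with overlaps $\le\zeta$ and $\ge\zeta+W+1$ respectively and $\min\{H(\theta_1),H(\theta_2)\}\le r$; augmenting this short list with $\theta^*$ itself (overlap $k\ge\zeta+W+1$ by $\zeta\le k-W-1$, energy $0\le r$) produces an overlap value $\ell^\star$ with $\phi(\ell^\star)\le r$ that lies strictly on one side of $B$, i.e. $\ell^\star\le\zeta$ or $\ell^\star\ge\zeta+W+1$. (Here I use $\phi(\langle\theta,\theta^*\rangle)\le H(\theta)$ and that $\phi$ is defined on all of $B$ since $p'$ is large.)

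The contradiction is then obtained by selecting the residue class that links a small-$\phi$ overlap to a large-$\phi$ overlap with the small one at the \emph{lower} index. In the ``valley below'' case ($\ell^\star\le\zeta$), let $u^\star=\ell^\star\bmod W$ and let $b^\star$ be the unique element of $B$ with $b^\star\equiv u^\star\pmod W$; then $\ell^\star<b^\star$, they are congruent mod $W$ (so $\lfloor\ell^\star/W\rfloor<\lfloor b^\star/W\rfloor$), and $\phi(\ell^\star)\le r<\phi(b^\star)$, which violates $(\star)$ the moment $\lfloor b^\star/W\rfloor\le\lfloor M/W\rfloor$. The ``valley above'' case ($\ell^\star\ge\zeta+W+1$) is handled symmetrically by a short sub-case split on whether some overlap $\le\zeta$ already has $\phi>r$ (pair it with $\ell^\star$) or every overlap $\le\zeta$ has $\phi\le r$ (pair such an overlap with the smallest band element $\zeta+1$, whose $\phi$ exceeds $r$); in both sub-cases one exhibits a same-residue pair along which $\phi$ ascends. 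The main obstacle, and essentially the only real work, is the in-range check $\lfloor b^\star/W\rfloor\le\lfloor M/W\rfloor$ (and its analogue in the other case): since $b^\star$ can be as large as $\zeta+W\le M+W$, it can marginally poke past the index window $\{0,\dots,\lfloor M/W\rfloor\}$ precisely when $\zeta$ is within $W$ of $M$, so in those boundary configurations one must fall back on a different residue class whose band representative does lie in range, using that clause~(3) forces the middle band $\{\zeta+1,\dots,\zeta+W-1\}$ to be nonempty and hence $W\ge2$ (which in particular already rules out $W=1$). This bounded bookkeeping, not any conceptual difficulty, is where the delicacy of the lemma lies.
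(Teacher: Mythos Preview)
Your ``valley below'' case is essentially the paper's argument, but the paper states it far more directly and without any contradiction setup. The paper simply asserts $\phi(\zeta)\le r_{p'}<\phi(\zeta+W)$ and sets $u=\zeta\bmod W$, so that $\zeta$ and $\zeta+W$ are \emph{consecutive} terms of the progression $\ell\mapsto\phi(\ell W+u)$; the strict inequality already witnesses non-monotonicity. (Strictly speaking, from clause~(2) one only gets $\phi(\ell_1)\le r_{p'}$ for some $\ell_1\le\zeta$, exactly as in your valley-below case, and one should then compare with the unique band element congruent to $\ell_1$ modulo $W$; the paper elides this and just writes $\zeta$.) No property~$(\star)$, no search for $\ell^\star$, no case split.

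Your ``valley above'' case is both unnecessary and, in its first sub-case, wrong. There you take some $a\le\zeta$ with $\phi(a)>r$ and pair it with $\ell^\star\ge\zeta+W+1$ having $\phi(\ell^\star)\le r$; but then $a<\ell^\star$ while $\phi(a)>\phi(\ell^\star)$, which is a \emph{descent} along the progression and is perfectly compatible with $(\star)$. You have not exhibited an ascent, so no contradiction. The reason you fell into this case at all is a literal reading of clause~(2) of Definition~\ref{def:OGP} with $\min\{H(\theta_1),H(\theta_2)\}\le r_{p'}$. The surrounding text (``the second condition makes sure that the two clusters are non-empty'') and the paper's own proof both make clear the intended condition is that \emph{both} clusters contain a low-energy point, i.e.\ $\max$ rather than $\min$; under that reading one always has a witness overlap $\le\zeta$ with $\phi\le r_{p'}$, and only your valley-below case is ever needed.

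The index-range issue you flag (whether $\lfloor b^\star/W\rfloor\le\lfloor M/W\rfloor$) is genuine but minor, and the paper's own proof does not address it either: it tacitly needs $\lfloor\zeta/W\rfloor+1\le\lfloor M/W\rfloor$, which can fail when $\zeta$ is close to $M$. Your proposed patch via clause~(3) and $W\ge 2$ does not obviously repair it; you would need a separate argument here, not just ``bounded bookkeeping''.
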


Using  Lemma \ref{lem:monot} we now provide evidence that for any arbitrarily small but fixed $\epsilon \in (0,1)$ $\Phi$ a.a.s. as $p \rightarrow +\infty$ \textbf{does not satisfy} the $(\zeta_{p'},W_{p'},H_{p'})$-OGP for any choice of parameters $\zeta_{p'} \leq M= \lfloor (1-\epsilon)k \rfloor,$ $W_{p'}=\omega(1)$ and $H_{p'}>0$. Notice that the additional stronger condition of $H_{p'}=\omega(\log p')$ is not required for our argument and only the weaker $H_{p'}>0$ suffices. To prove the abscence of such an OGP in light of Lemma \ref{lem:monot} according to which it suffices to study the monotonicity of $\phi(\ell)$ across the different arithmetic progressions with some difference $W_{p'}=\omega(1)$.

Towards understanding the concentration properties of $\phi(\ell)$ we use the moments method, which we have already explained in the Introduction, but which we repeat here for completeness. We define for $t, \ell \ge 0$, the counting random variable
\begin{align*}
Z_{\ell,t}=  \left|  \{ \theta \in \{0,1\}^{p'}: \| \theta \|_0 = k, \langle \theta, \theta^* \rangle = \ell, H(\theta)  \le t \}   \right|
\end{align*}and observe that
$$\phi(\ell) \le t \Leftrightarrow Z_{\ell,t}\geq 1.$$
In particular,  by Markov's inequality and Paley's-Zigmund's inequality we have for all $t>0$ and $\ell \in \{0,1,2,\ldots, \lfloor(1-\epsilon) k \rfloor \}$:
\begin{align*}
\frac{\mathbb{E}\left[Z_{t,\ell}\right]^2}{\mathbb{E}\left[Z^2_{t,\ell}\right]} \leq \Pr\left[\phi(\ell) \le t\right] = \Pr[  Z_{\ell,t } \ge 1  ]  \leq \mathbb{E}\left[Z_{t,\ell}\right].
\end{align*}Hence if for some $t_1,t_2>0$ it holds $\mathbb{E}\left[Z_{t_1,\ell}\right]=o(1)$ we have $\phi(\ell)>t_1$ a.a.s as $p \rightarrow +\infty$ and if $\frac{\mathbb{E}\left[Z_{t_2,\ell}\right]^2}{\mathbb{E}\left[Z^2_{t_2,\ell}\right]} =1-o(1)$ or equivalently $\frac{\mathrm{Var}\left[Z_{t_2,\ell}\right]}{\mathbb{E}\left[Z_{t_2,\ell}\right]^2} =o(1)$ we have $\phi(\ell) \leq t_2$ a.a.s. The employment of the first moment to get a a.a.s. lower bound is called a first moment method, and the employment of the second moment to get an a.a.s. upper bound is called the second moment method.


As we have already explained in the Introduction, we define the function which maps $\ell$ to the value $t=t_{\ell}>0$ for which the first moment satisfies $\mathbb{E}\left[Z_{t_1,\ell}\right]=1 $ (\textit{first moment prediction}), which we use as our heuristic approximation for $\phi(\ell)$, which has been a successful approximation in both regression \cite{gamarnik2017high}, the planted clique \cite{gamarnik2019overlap} and sparse PCA models \cite{ArousSparse}. Now using standard large deviation theory of the Binomial random variable we arrive at the following definition of the first moment prediction.

\begin{definition}\label{fpdef} 
Suppose $\frac{1}{2}<R<1$. Then for every fixed value of $p', n_{\mathrm{pos}}$ satisfying Lemma \ref{pi_prime_estimation} and \ref{number_positive} for sufficiently small $\eta>0$ we denote by $F_{p'}(\lambda),\lambda \in [0,1)$ the \textbf{first moment function} defined implicitly by being the unique function satisfying the following two constraints for each $\lambda \in [0,1),$
\begin{itemize}
    \item[(1)] $F_{p'}(\lambda) \leq 1-2^{\lambda-1}$;
    \item[(2)] \begin{align} \label{func_prop}
    \alpha \left(F_{p'}(\lambda),1-2^{\lambda-1}\right)= \frac{\ln \left[\binom{k}{ \lfloor \lambda k \rfloor }\binom{p' -k }{ \lfloor (1-\lambda)k \rfloor}\right]}{n_{\mathrm{pos} }}.
    \end{align}
\end{itemize}
where $\alpha(x,y) := x \ln \frac{x}{y} + (1-x)  \ln \frac{1- x }{1-y } $.
\end{definition}

\begin{remark}
Here $\alpha(x,y)$, $x, y \in [0,1]$,  is the Kullback–Leibler divergence (relative entropy function) between two Bernoulli random variables with probability of success $x$ and $y$, respectively. It naturally appears here using the large deviation properties of the Binomial distributions.
\end{remark}

\begin{remark}
Technically $F_{p'}$ also depends on the value of $n_{\mathrm{pos}}$, and not just $p'$. However, we choose to emphasize only the dependency on the population of potentially defective items —  which is the main (growing) parameter we use to index all quantities in our work—  in order to simplify the notation. 
\end{remark}

The non-voidness of Definition \ref{fpdef} is given in the following Proposition, along with some basic analytic properties of the first moment function. The proof of Proposition~\ref{prop_fm} can be found in Appendix~\ref{omitted_formal}. 

\begin{proposition}\label{prop_fm}
Suppose $\frac{1}{2}<R<1.$ Then the first moment function $F_{p'}(\lambda)$ exists, is unique and for any $\epsilon>0$ it is continuously differentiable for $\lambda \in [0,1-\epsilon],$ a.a.s. as $p \rightarrow +\infty$ (with respect to the randomness of $p', n_{\mathrm{pos}}$ satisfying Lemma \ref{pi_prime_estimation} and \ref{number_positive} for sufficiently small $\eta.$ ).
\end{proposition}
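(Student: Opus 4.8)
The plan is to analyze, for each fixed $\lambda \in [0,1)$, the single-variable equation $\alpha(x, 1-2^{\lambda-1}) = c_{p'}(\lambda)$ in the unknown $x$, where $c_{p'}(\lambda) := \ln[\binom{k}{\lfloor \lambda k\rfloor}\binom{p'-k}{\lfloor(1-\lambda)k\rfloor}]/n_{\mathrm{pos}}$ is the right-hand side of~\eqref{func_prop}, and show it has a unique solution in the interval $[0, 1-2^{\lambda-1}]$. Write $y := 1-2^{\lambda-1} \in (1/2, 1]$ for brevity. First I would recall the standard convexity facts about the relative entropy: for fixed $y \in (0,1)$, the map $x \mapsto \alpha(x,y)$ is strictly convex on $[0,1]$, attains its minimum value $0$ at $x = y$, is strictly decreasing on $[0,y]$ with $\alpha(0,y) = \ln\frac{1}{1-y} = -\ln 2^{\lambda-1} = (1-\lambda)\ln 2$, and is strictly increasing on $[y,1]$. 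Restricted to $x \in [0,y]$ it is therefore a continuous strictly decreasing bijection from $[0,y]$ onto $[0, (1-\lambda)\ln 2]$. Hence the equation $\alpha(x,y) = c_{p'}(\lambda)$ has a (necessarily unique) solution $x = F_{p'}(\lambda)$ in $[0,y]$ if and only if $0 \le c_{p'}(\lambda) \le (1-\lambda)\ln 2$.

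The nonnegativity of $c_{p'}(\lambda)$ is immediate since binomial coefficients are at least $1$. For the upper bound, I would bound $\binom{k}{\lfloor\lambda k\rfloor}\binom{p'-k}{\lfloor(1-\lambda)k\rfloor} \le \binom{p'}{k} = 2^{R' \log_2\binom{p'}{k} \cdot \ldots}$; more directly, since $p' = \Theta(p (k/p)^{1/(2R)+o(1)})$ by Lemma~\ref{pi_prime_estimation} and $n_{\mathrm{pos}} = (1/2 + o(1))n = (1/2+o(1))\log_2\binom{p}{k}/R$ by Lemma~\ref{number_positive}, a short computation with Stirling's approximation shows $c_{p'}(\lambda) = (1-\lambda)\ln\frac{p'}{k}(1+o(1)) + O(k)$ while $\frac{p'}{k}$ is polynomially smaller than $\frac{p}{k}$ in a way that, after dividing by $n_{\mathrm{pos}}$, lands strictly below $(1-\lambda)\ln 2$ for all $\lambda$ bounded away from $1$; the case $R > 1/2$ is exactly what makes the exponent $\frac{1}{2R} < 1$ so that $\ln\frac{p'}{k} \approx (1 - \frac{1}{2R})\ln\frac{p}{k} + \frac{1}{2R}\ln\frac{p}{p} \cdot \ldots$ — I would need to do this estimate carefully, but it is a routine large-deviation/counting bound. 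This establishes existence and uniqueness of $F_{p'}(\lambda)$ for each $\lambda \in [0,1)$, which is the non-voidness claim.

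For continuous differentiability on $[0, 1-\epsilon]$ I would apply the implicit function theorem to $G(\lambda, x) := \alpha(x, 1-2^{\lambda-1}) - c_{p'}(\lambda) = 0$. The partial derivative $\partial_x \alpha(x,y) = \ln\frac{x}{1-x} - \ln\frac{y}{1-y}$ is nonzero at $x = F_{p'}(\lambda)$ precisely because $F_{p'}(\lambda) < y$ strictly — and I would argue the inequality is strict (not just $\le$) for $\lambda \le 1-\epsilon$, since $c_{p'}(\lambda) > 0$ strictly (the binomial coefficients are $> 1$ for $\lambda k \in (0,k)$, and the boundary cases $\lambda = 0$ need $\binom{p'-k}{k} > 1$, which holds as $p' \to \infty$). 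The only subtlety is that $c_{p'}(\lambda)$ involves the floor functions $\lfloor \lambda k\rfloor$, so strictly speaking it is not a differentiable — not even continuous — function of the real variable $\lambda$. I would handle this by replacing the discrete right-hand side with its smooth interpolation via the Gamma function / the continuous entropy $H(\lambda)$: Stirling gives $\ln\binom{k}{\lfloor\lambda k\rfloor} = k H(\lambda) + O(\log k)$ uniformly, and since the statement is asymptotic (a.a.s. as $p\to\infty$) one works with the limiting smooth profile $F(\lambda)$ defined by $\alpha(F(\lambda), 1-2^{\lambda-1}) = $ (the limiting rate), to which the implicit function theorem applies cleanly. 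The main obstacle is thus bookkeeping: making the passage from the floor-function-laden finite-$p'$ definition to a genuinely smooth limiting object rigorous, and verifying the strict inequality $F_{p'}(\lambda) < 1 - 2^{\lambda-1}$ uniformly on $[0,1-\epsilon]$, which is what keeps $\partial_x G \neq 0$ and hence gives $C^1$ regularity. Everything else is convexity of relative entropy plus the counting estimates already packaged in Lemmas~\ref{number_positive} and~\ref{pi_prime_estimation}.
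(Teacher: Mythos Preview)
Your existence-and-uniqueness argument is correct and essentially identical to the paper's: strict monotonicity of $x\mapsto\alpha(x,y)$ on $[0,y]$ plus the two-sided bound $0<c_{p'}(\lambda)<(1-\lambda)\ln 2$. The estimate you sketch for the upper bound is exactly what the paper packages as Lemma~\ref{eta_thing}(a), so you are redoing that lemma rather than citing it, but the content is the same.

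For the $C^1$ part you correctly isolate the only real issue (the floors in $c_{p'}(\lambda)$), but your proposed fix is both heavier than necessary and does not prove the statement as written. Replacing the binomial coefficients by their Gamma-function interpolants, or passing to a ``limiting smooth profile'' $F$, would establish regularity of a \emph{different} function, not of $F_{p'}$ as defined in Definition~\ref{fpdef}; the proposition is a claim about the finite-$p'$ object a.a.s., not about a limit. The paper's resolution is much simpler: for fixed $k,p',n_{\mathrm{pos}}$ the right-hand side $c_{p'}(\lambda)$ depends on $\lambda$ only through $\lfloor\lambda k\rfloor$, hence is \emph{locally constant} --- on a small enough interval around any $\lambda^*$ it does not change at all. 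On such an interval $G(\lambda,x)=\alpha(x,1-2^{\lambda-1})-\text{const}$ is genuinely $C^1$ in both variables, $\partial_x G\neq 0$ by Lemma~\ref{lem:tayl} (using the uniform bound $F_{p'}(\lambda)\le(1-\delta)(1-2^{\lambda-1})$ from Lemma~\ref{eta_thing}(b)), and the implicit function theorem applies directly. This buys you the conclusion with no interpolation and no limit interchange.
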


Now we prove rigorously using the first moment method that $\phi(\ell)$ can be lower bounded in terms of the first moment function $F_{p'}(\ell)$.
\begin{theorem}\label{first_moment_calc}
Suppose $\frac{1}{2}<R<1$ and $k = \Theta(p^{\alpha})$ for some constant $\alpha \in (0,1)$.  For every $\epsilon >0$, there exists a constant $C = C(\epsilon)$ such that a.a.s. as $p \rightarrow +\infty$  for every integer $\ell \in \{ 0,1,\ldots,\lfloor (1-\epsilon)k \rfloor \}$ we have:
\begin{align*}
    \phi(\ell) \geq n_{\mathrm{pos} } F_{p'}\left(\frac{\ell}{k}\right)-C \ln k.
\end{align*}
\end{theorem}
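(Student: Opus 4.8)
The plan is to run the first moment (union bound) method on the counting variable $Z_{\ell,t}$ and then union over $\ell$. Throughout, as in Remark~\ref{conditioning_remark}, we argue deterministically conditionally on any realization of the defective set, of $\mathcal{P}$, and of $\mathrm{PD}$ for which $n_{\mathrm{pos}}=|\mathcal{P}|$ and $p'=|\mathrm{PD}|$ satisfy the bounds of Lemmas~\ref{number_positive} and~\ref{pi_prime_estimation} for a small enough $\eta$; since such realizations occur a.a.s., this yields the theorem. Fix $\epsilon>0$, an integer $\ell\in\{0,\dots,\lfloor(1-\epsilon)k\rfloor\}$, and write $\lambda=\ell/k\in[0,1-\epsilon]$ and $q(\lambda):=1-2^{\lambda-1}$. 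The heart of the argument is the \emph{exact} identification, under this conditioning, of the law of $H(\theta)$ for any fixed $\theta\in\{0,1\}^{p'}$ with $\|\theta\|_0=k$ and $\langle\theta,\theta^*\rangle=\ell$: namely $H(\theta)\sim\mathrm{Bin}(n_{\mathrm{pos}},q(\lambda))$. Granting this, since the number of such $\theta$ is exactly $\binom{k}{\ell}\binom{p'-k}{k-\ell}$, we get $\mathbb{E}[Z_{\ell,t}]=\binom{k}{\ell}\binom{p'-k}{k-\ell}\,\Pr[\mathrm{Bin}(n_{\mathrm{pos}},q(\lambda))\le t]$.

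To establish the law of $H(\theta)$, first observe that conditioning on $\mathrm{PD}$ and on which tests are negative constrains only the rows of the negative tests, which are independent of the rows of the positive tests; and conditioning on a positive test being positive constrains only its own row. Hence, conditionally, the rows $X_i$, $i\in\mathcal{P}$, restricted to the columns indexed by $\mathrm{PD}$, are mutually independent, each with the law: the $p'-k$ columns corresponding to non-defective items of $\mathrm{PD}$ are i.i.d.\ $\mathrm{Bern}(\nu/k)$, and the $k$ columns corresponding to defective items are i.i.d.\ $\mathrm{Bern}(\nu/k)$ conditioned on at least one of them being $1$. A fixed $\theta$ with overlap $\ell$ leaves a positive test $i$ unexplained iff all $k$ of $\theta$'s columns vanish in row $i$; splitting $\theta$'s columns into its $\ell$ defective and $k-\ell$ non-defective ones, and using that the $k-\ell$ defective columns outside $\theta$ are the only remaining way to make the test positive, a direct computation gives $\Pr[\text{test }i\text{ unexplained by }\theta]=2\left(1-\tfrac{\nu}{k}\right)^{k}\!\left(1-\left(1-\tfrac{\nu}{k}\right)^{k-\ell}\right)=1-\left(1-\tfrac{\nu}{k}\right)^{k-\ell}$; since $\left(1-\tfrac{\nu}{k}\right)^{k}=\tfrac12$ we have $\left(1-\tfrac{\nu}{k}\right)^{k-\ell}=\tfrac12\left(1-\tfrac{\nu}{k}\right)^{-\ell}=2^{\lambda-1}$, so this probability equals $q(\lambda)$. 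Independence across $i\in\mathcal{P}$ then gives $H(\theta)\sim\mathrm{Bin}(n_{\mathrm{pos}},q(\lambda))$.

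For $t<q(\lambda)\,n_{\mathrm{pos}}$, the standard Chernoff lower-tail bound gives $\Pr[\mathrm{Bin}(n_{\mathrm{pos}},q(\lambda))\le t]\le\exp\!\left(-n_{\mathrm{pos}}\,\alpha\!\left(\tfrac{t}{n_{\mathrm{pos}}},q(\lambda)\right)\right)$, hence $\mathbb{E}[Z_{\ell,t}]\le\exp\!\left(\ln\!\big[\binom{k}{\ell}\binom{p'-k}{k-\ell}\big]-n_{\mathrm{pos}}\,\alpha\!\left(\tfrac{t}{n_{\mathrm{pos}}},q(\lambda)\right)\right)$. Because $\lambda=\ell/k$ forces $\lfloor\lambda k\rfloor=\ell$ and $\lfloor(1-\lambda)k\rfloor=k-\ell$, Definition~\ref{fpdef} gives precisely $\ln\!\big[\binom{k}{\ell}\binom{p'-k}{k-\ell}\big]=n_{\mathrm{pos}}\,\alpha\!\left(F_{p'}(\lambda),q(\lambda)\right)$, so $\mathbb{E}[Z_{\ell,t}]\le\exp\!\left(n_{\mathrm{pos}}\big[\alpha(F_{p'}(\lambda),q(\lambda))-\alpha(\tfrac{t}{n_{\mathrm{pos}}},q(\lambda))\big]\right)$. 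Take $t=t_\ell:=\lfloor n_{\mathrm{pos}}F_{p'}(\lambda)-C\ln k\rfloor$; if $t_\ell<0$ there is nothing to prove, so assume $t_\ell\ge 0$. Since $x\mapsto\alpha(x,q(\lambda))$ is convex and strictly decreasing on $[0,q(\lambda)]$, its tangent-line bound yields $\alpha(\tfrac{t_\ell}{n_{\mathrm{pos}}},q(\lambda))-\alpha(F_{p'}(\lambda),q(\lambda))\ge\big(F_{p'}(\lambda)-\tfrac{t_\ell}{n_{\mathrm{pos}}}\big)\,|\partial_x\alpha(F_{p'}(\lambda),q(\lambda))|$. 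Now one uses the crude a priori estimates of Lemmas~\ref{number_positive} and~\ref{pi_prime_estimation} together with $\tfrac12<R<1$ and $k=o(p)$ to check that $\tfrac{1}{n_{\mathrm{pos}}}\ln\!\big[\binom{k}{\ell}\binom{p'-k}{k-\ell}\big]=\alpha(F_{p'}(\lambda),q(\lambda))$ is bounded below by a positive constant depending only on $\epsilon$ (and the fixed $R$), uniformly over $\lambda\in[0,1-\epsilon]$; since also $q(\lambda)\in[1-2^{-\epsilon},\tfrac12]$ lies in a fixed compact subinterval of $(0,1)$ there, this forces $F_{p'}(\lambda)\le q(\lambda)-\delta$ and hence $|\partial_x\alpha(x,q(\lambda))|\ge c$ for all $x\le F_{p'}(\lambda)$, with $\delta,c>0$ depending only on $\epsilon$. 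Therefore $\mathbb{E}[Z_{\ell,t_\ell}]\le\exp(-cC\ln k+O(1))\le k^{-2}$ once $C=C(\epsilon)$ is large enough. Summing over the at most $k+1$ values of $\ell$ and using $\Pr[\phi(\ell)\le t_\ell]=\Pr[Z_{\ell,t_\ell}\ge1]\le\mathbb{E}[Z_{\ell,t_\ell}]$ shows that a.a.s.\ $\phi(\ell)>t_\ell\ge n_{\mathrm{pos}}F_{p'}(\ell/k)-C\ln k-1$ for all $\ell\le\lfloor(1-\epsilon)k\rfloor$ simultaneously, which is the claim after replacing $C$ by $C+1$.

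The point requiring genuine care is the conditioning step: one must verify that, even after the COMP pre-processing and after conditioning on the typical values of $\mathcal{P}$ and $\mathrm{PD}$, the positive-test rows retain the clean product-type conditional law above, so that $H(\theta)$ is an honest Binomial; everything downstream (Chernoff, the exact identity with $\alpha(F_{p'},q)$, and the convexity estimate) is routine. The only other non-cosmetic ingredient is the uniform-in-$\lambda$ lower bound on $\alpha(F_{p'}(\lambda),q(\lambda))$, which is exactly what makes a loss of only $C\ln k$ suffice and keeps $C$ a function of $\epsilon$ alone.
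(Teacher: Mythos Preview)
Your proof is correct and follows essentially the same route as the paper: identify the conditional law $H(\theta)\sim\mathrm{Bin}(n_{\mathrm{pos}},1-2^{\lambda-1})$ (the paper's Lemma~\ref{OGP_dist}), bound the lower tail via relative entropy, cancel the entropy term against the binomial count using the defining relation~\eqref{func_prop} for $F_{p'}$, and absorb the $C\ln k$ slack via a uniform lower bound on $|\partial_x\alpha|$ at $F_{p'}(\lambda)$ (the paper packages this as Lemma~\ref{lem:tayl}, with the needed separation $F_{p'}\le(1-\delta)q$ supplied by Lemma~\ref{eta_thing}). The only cosmetic differences are that you invoke the one-sided Chernoff bound directly, avoiding the $O(\ln n_{\mathrm{pos}})$ correction of Lemma~\ref{deviation}, and that you derive the derivative bound from a positive lower bound on $\alpha(F_{p'}(\lambda),q(\lambda))$ rather than quoting Lemma~\ref{eta_thing}(b) explicitly; both are harmless streamlinings of the same argument.
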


Using the second moment method to establish concentration, we conjecture that the Theorem \ref{first_moment_calc} can be strengthened to prove the following result (as explained in the literature, similar results have been proven to be tight in multiple cases where OGP has been studied \cite{gamarnik2017high, balister2019dense, gamarnik2019landscape,ArousSparse}.

\begin{conjecture}\label{conj}
Suppose $\frac{1}{2}<R<1.$  For every $\epsilon >0$, there exists a constant $C = C(\epsilon)$ such that a.a.s. as $p \rightarrow +\infty$  for every integer $\ell \in \{ 0,1,\ldots,\lfloor (1-\epsilon)k \rfloor \}$ we have:
\begin{align*}
   \big| \phi(\ell) - n_{\mathrm{pos} } F_{p'}\left(\frac{\ell}{k}\right)\big| \leq C \ln k.
\end{align*}
\end{conjecture}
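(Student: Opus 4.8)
Theorem~\ref{first_moment_calc} already supplies the lower bound $\phi(\ell)\ge n_{\mathrm{pos}}F_{p'}(\ell/k)-C\ln k$, uniformly over $\ell\le\lfloor(1-\epsilon)k\rfloor$ and a.a.s., so the content of the conjecture is the matching \emph{upper} bound $\phi(\ell)\le n_{\mathrm{pos}}F_{p'}(\ell/k)+C'\ln k$, a.a.s.\ and simultaneously in $\ell$. The plan is to attack this with the second moment method on $Z_{\ell,t}$: fix $\ell$, set $t_\ell:=\lfloor n_{\mathrm{pos}}F_{p'}(\ell/k)+C'\ln k\rfloor$, and try to show $\Pr[Z_{\ell,t_\ell}=0]\le\mathrm{Var}[Z_{\ell,t_\ell}]/\mathbb{E}[Z_{\ell,t_\ell}]^2=O(k^{-2})$, after which a union bound over the $\le k$ relevant $\ell$ closes it (the lower bound being already simultaneous). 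Under the conditioning of Remark~\ref{conditioning_remark} one records first that, given the sets $\mathcal P,\mathrm{PD}$ (and $\theta^*$), the positive rows $\{X_i|_{\mathrm{PD}}\}_{i\in\mathcal P}$ are independent, each being $\mathrm{Bern}(\nu/k)^{\otimes(p'-k)}$ on the non-defective coordinates times $\mathrm{Bern}(\nu/k)^{\otimes k}$ conditioned on being nonzero on the $k$ defective coordinates; by exchangeability this law depends on $(\mathcal P,\mathrm{PD})$ only through $(n_{\mathrm{pos}},p')$, which lie in the ranges of Lemmas~\ref{number_positive}--\ref{pi_prime_estimation} a.a.s.

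\textbf{Refined first moment.} By linearity $\mathbb{E}[Z_{\ell,t}]=\binom{k}{\ell}\binom{p'-k}{k-\ell}\Pr[\mathrm{Bin}(n_{\mathrm{pos}},q_\ell)\le t]$, and the choice $(1-\nu/k)^k=\tfrac12$ makes $q_\ell=1-2^{\ell/k-1}$ \emph{exactly}, the cutoff $\ell\le(1-\epsilon)k$ keeping $q_\ell$ in a fixed compact subinterval of $(0,1)$. A Bahadur--Rao estimate for the binomial lower tail, $\Pr[\mathrm{Bin}(n_{\mathrm{pos}},q)\le xn_{\mathrm{pos}}]=\exp(-n_{\mathrm{pos}}\alpha(x,q)+O(\ln n_{\mathrm{pos}}))$ for $x$ bounded away from $\{0,q\}$, together with the defining identity~\eqref{func_prop} of $F_{p'}$ and a one-term Taylor expansion of $\alpha(\cdot,q_\ell)$, gives $\mathbb{E}[Z_{\ell,t_\ell}]=\exp\bigl(C'c_\ell\ln k+O(\ln n_{\mathrm{pos}})\bigr)$ with $c_\ell:=\bigl|\partial_x\alpha(F_{p'}(\ell/k),q_\ell)\bigr|$. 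Here $c_\ell$ is bounded below by a positive constant (depending on $\epsilon$ and $R$): since $p'/k\to\infty$ precisely because $R>\tfrac12$, one gets $\alpha(F_{p'}(\lambda),q_\lambda)=\ln[\binom{k}{\lambda k}\binom{p'-k}{(1-\lambda)k}]/n_{\mathrm{pos}}\to(1-\lambda)(2R-1)\ln2$, which lies strictly between $0$ and $\alpha(0,q_\lambda)=(1-\lambda)\ln2$ (the upper inequality uses $R<1$), so $F_{p'}(\lambda)$ is bounded away from both $0$ and $q_\lambda$ uniformly on $[0,1-\epsilon]$. Consequently, when the sparsity is such that $n_{\mathrm{pos}}=k^{O(1)}$ (e.g.\ $k=p^{\Theta(1)}$), enlarging $C'$ makes $\mathbb{E}[Z_{\ell,t_\ell}]$ an arbitrarily large fixed power of $k$, uniformly in $\ell$ --- which is exactly what the second moment step needs.

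\textbf{Second moment, and where it breaks.} Expand $\mathbb{E}[Z_{\ell,t}^2]=\sum_{a,b}N(a,b)\,\Pr[H(\theta_1)\le t,H(\theta_2)\le t\mid(a,b)]$ over the numbers $a,b$ of common defective and common non-defective coordinates of admissible $\theta_1,\theta_2$, where $N(a,b)$ is the elementary product of binomials and $(H(\theta_1),H(\theta_2))$ is a sum of $n_{\mathrm{pos}}$ i.i.d.\ $\mathrm{Bern}(q_\ell)$ pairs with ``both-unexplained'' probability $q^{(2)}_\ell(a,b)=2^{(a+b)/k-1}\bigl(1-2^{(2\ell-a)/k-1}\bigr)$. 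The diagonal term $\theta_1=\theta_2$ ($a=\ell,\,b=k-\ell$) equals $\mathbb{E}[Z_{\ell,t}]=o(\mathbb{E}[Z_{\ell,t}]^2)$, and the ``near-diagonal'' terms ($\theta_1,\theta_2$ differing in $m=O(1)$ coordinates) contribute $\le\mathbb{E}[Z_{\ell,t}]\sum_{m\ge1}\bigl(kp'e^{-\Theta(n_{\mathrm{pos}}/k)}\bigr)^m$, negligible whenever $kp'e^{-\Theta(n_{\mathrm{pos}}/k)}=o(1)$ (in particular for $k=p^{\Theta(1)}$). The real trouble is the \emph{typical} overlap: two uniform admissible $\theta_1,\theta_2$ share about $a^*=\ell^2/k$ defective and $b^*=(k-\ell)^2/p'=o(k)$ non-defective coordinates, and one checks $q^{(2)}_\ell(a^*,b^*)>q_\ell^2$ for every $\ell/k\in(0,1)$, the gap being $\Theta(1)$ once $\ell/k$ is bounded away from $0$ and $1$ (and $\Theta((\ell/k)^2)$ for $\ell=o(k)$). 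So the bulk of the pairs are \emph{positively} correlated at $\Theta(1)$ per test when $\ell=\Theta(k)$, whence $\mathbb{E}[Z_{\ell,t_\ell}^2]/\mathbb{E}[Z_{\ell,t_\ell}]^2$ is exponentially large in $n_{\mathrm{pos}}$ and the plain second moment method \emph{fails} in that regime (a condensation phenomenon); only for $\ell=o(k)$ growing slowly enough does $\mathbb{E}[Z^2]=(1+o(1))\mathbb{E}[Z]^2$ go through cleanly.

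\textbf{Main obstacle.} Thus to prove the conjecture for \emph{all} $\ell\le(1-\epsilon)k$ --- in particular for $\ell$ a constant fraction of $k$ --- one must go beyond the plain second moment. The standard remedy for exactly this annealed/quenched discrepancy is small-subgraph (``cluster'') conditioning: identify the local features of the bipartite test hypergraph responsible for the variance blow-up (here, presumably low-degree potentially-defective items and short overlap patterns among positive tests), condition on their counts, and show the conditional variance of $Z_{\ell,t}$ is $(1+o(1))$ of the conditional first moment squared; an alternative would be to second-moment over a restricted class of ``rigid'' near-satisfying sets. Pushing any of this through \emph{uniformly} in $\ell\le(1-\epsilon)k$, with the $C=C(\epsilon)$ and $O(\ln k)$ precision the statement demands, is the crux; the $\ell\le(1-\epsilon)k$ cutoff keeps all the Bernoulli parameters non-degenerate and avoids the combinatorially rigid exact-recovery regime $\ell\approx k$, but it does not itself dissolve the condensation barrier --- which is why the statement is (appropriately) left as a conjecture here, with the remark that similar tight two-moment analyses have succeeded for the analogous $\phi(\ell)$ functions in the sparse regression and planted clique models (cf.\ \cite{gamarnik2017high,gamarnik2019landscape,balister2019dense,ArousSparse}).
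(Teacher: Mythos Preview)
There is nothing to compare against: the paper does \emph{not} prove this statement. It is stated as Conjecture~\ref{conj}, and the only justification the paper offers is the single sentence preceding it --- ``Using the second moment method to establish concentration, we conjecture that Theorem~\ref{first_moment_calc} can be strengthened\ldots'' --- together with pointers to \cite{gamarnik2017high,balister2019dense,gamarnik2019landscape,ArousSparse} where analogous tightness has been established for other models. The conjecture is then \emph{assumed} as a hypothesis in Theorem~\ref{thm:OGP}. You recognize this correctly in your final paragraph.

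Your write-up is therefore not a proof but an annotated discussion of why the conjecture is plausible and why a naive attack stalls, and in that capacity it goes considerably further than the paper does. A few comments on the substance:

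\begin{itemize}
\item Your reduction to the upper bound, the refined first-moment computation, and the observation that $F_{p'}(\lambda)$ stays bounded away from $0$ and from $q_\lambda=1-2^{\lambda-1}$ on $[0,1-\epsilon]$ (so that $|\partial_x\alpha|$ is bounded below) are all consistent with what the paper proves in and around Lemma~\ref{eta_thing} and Theorem~\ref{first_moment_calc}.
\item Your pair-probability formula $q^{(2)}_\ell(a,b)=2^{(a+b)/k-1}(1-2^{(2\ell-a)/k-1})$ is correct under the conditioning of Remark~\ref{conditioning_remark}, and your observation that $q^{(2)}_\ell(a^*,b^*)>q_\ell^2$ at the typical overlap $(a^*,b^*)\approx(\ell^2/k,0)$ for $\ell/k\in(0,1)$ is also correct (the two agree at $\ell=0$ and separate by a constant once $\ell/k$ is bounded away from $0$). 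This is indeed the standard signature of an annealed/quenched gap that would break the plain Paley--Zygmund route for $\ell=\Theta(k)$.
\item Your suggested remedies (conditioning on local statistics of the test hypergraph, or restricting to a ``rigid'' subfamily of near-satisfying sets) are reasonable analogues of what has worked in the cited models, but the paper does not attempt any of this; it simply leaves the matter open.
\end{itemize}

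In short: your proposal is an honest and technically sensible explanation of \emph{why} the statement is a conjecture rather than a theorem, and it is strictly more informative than the paper's own treatment, which contains no argument at all beyond the one-line heuristic that the second moment method ought to apply.
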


\begin{figure}[!tbp]
 \centering
 \begin{minipage}[b]{0.35\textwidth}
    \includegraphics[width=\textwidth]{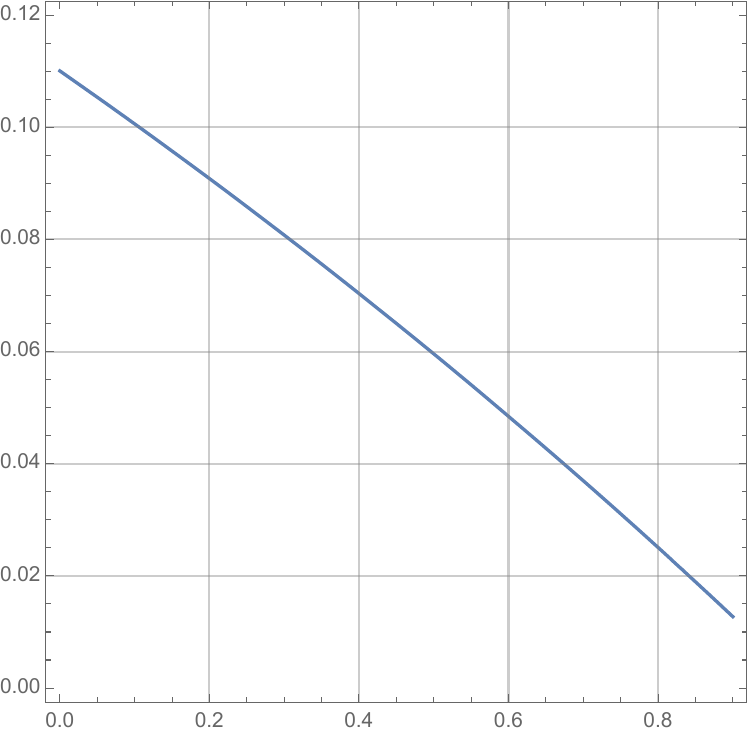}
    \caption{$R=0.75$}
 \end{minipage}
 \hfill
 \begin{minipage}[b]{0.35\textwidth}
    \includegraphics[width=\textwidth]{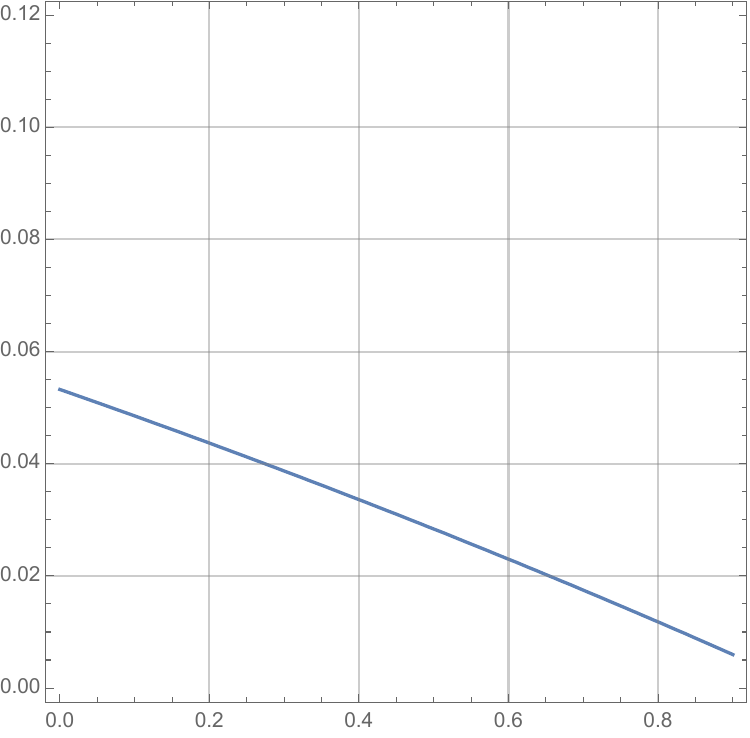}
    \caption{$R=0.85$}
 \end{minipage}
 \hfill
 \begin{minipage}[b]{0.35\textwidth}
    \includegraphics[width=\textwidth]{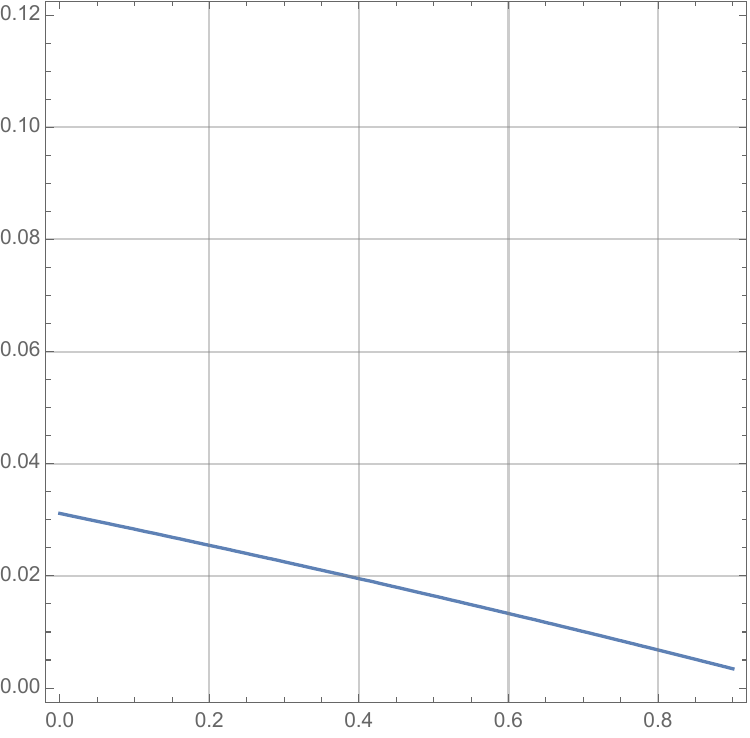}
    \caption{$R=0.9$}
 \end{minipage}
 \hfill
 \begin{minipage}[b]{0.35\textwidth}
    \includegraphics[width=\textwidth]{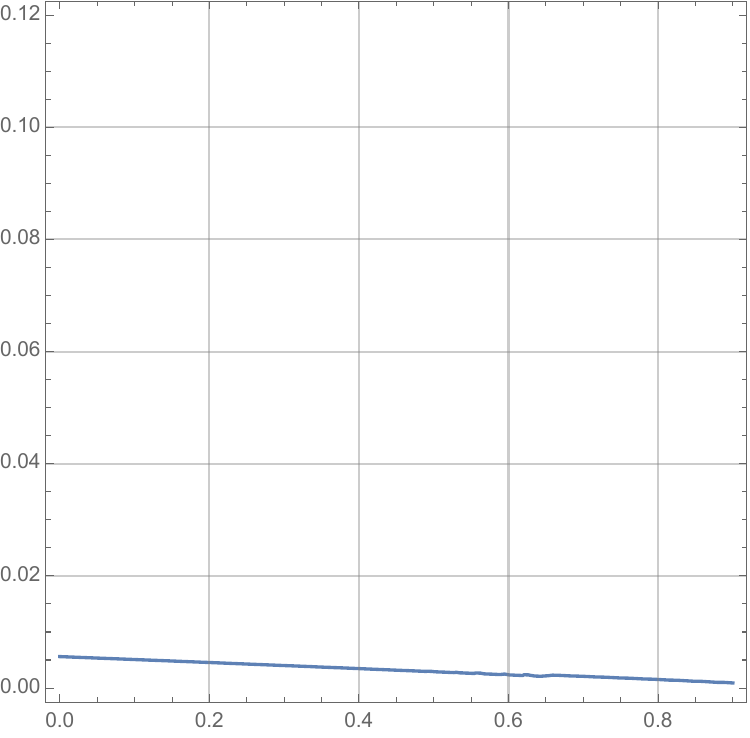}
    \caption{$R=0.975$}
 \end{minipage}
  \caption{ For plotting purposes, we plot a ``first order approximation", $\tilde{F}: [0,0.9] \rightarrow \mathbb{R}$ of the implicitly defined first moment curve
        $F_{p'}(\lambda)$ which is the curve satisfying for all  $\lambda \in [0,0.9]$, $\tilde{F}(\lambda) \leq 1-2^{\lambda-1}$ and $ \alpha \left(\tilde{F}(\lambda),1-2^{\lambda-1}\right)= (2R-1)\ln 2 (1-\lambda) .$   The first order approximation we use is $\ln \left[\binom{k}{ \lfloor \lambda k \rfloor }\binom{p' -k }{ \lfloor (1-\lambda)k \rfloor}\right]/n_{\mathrm{pos} } \approx (2R-1)\ln 2 (1-\lambda) $ 
      and follows from a relatively straightforward derivation using Lemmas \ref{number_positive}, \ref{pi_prime_estimation} and basic asymptotics since $k=o(p)$. One can see that the $\tilde{F}$ remains strictly decreasing throughout the difference choices of $R<1$ (in accordance with Theorem \ref{monotonicity_theorem}) and it converges to the zero function as $R$ tends to $1$.}
    \label{fig:monoton}
\end{figure}

%

\begin{theorem}\label{monotonicity_theorem}
Let arbitrary $\frac{1}{2}<R<1$. For every  $\epsilon>0$ there exists  a constant $D = D(\epsilon)>0$ such that a.a.s. as $p \rightarrow +\infty$ for every  $\ell \in \{ 0,\ldots,\lfloor (1-\epsilon)k \rfloor \} $ we have:
\begin{align} F_{p'} \left(\frac{\ell+1}{k}\right)-F_{p'}\left(\frac{\ell}{k}\right) \leq -D\frac{\ln \left( \frac{p'-k}{k} \right)  }{n_{\mathrm{pos} }}.\end{align}In particular, $F_{p'}(\ell)$ is strictly decreasing as a function of $\ell \in \{ 0,1,2,\ldots,\lfloor (1-\epsilon)k \rfloor \},$ a.a.s. as $p \rightarrow +\infty.$
\end{theorem}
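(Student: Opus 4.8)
The plan is to work directly at the lattice points $\lambda=\ell/k$, where the floors in Definition~\ref{fpdef} are harmless, since $\lfloor\lambda k\rfloor=\ell$ and $\lfloor(1-\lambda)k\rfloor=k-\ell$ exactly. Write $x_\ell:=F_{p'}(\ell/k)$, $y_\ell:=1-2^{\ell/k-1}$ and $g_\ell:=\frac{1}{n_{\mathrm{pos}}}\ln\bigl[\binom{k}{\ell}\binom{p'-k}{k-\ell}\bigr]$, so that $\alpha(x_\ell,y_\ell)=g_\ell$ with $x_\ell\le y_\ell$; equivalently $x_\ell=\beta(g_\ell,y_\ell)$, where $\beta(\cdot,y)$ denotes the inverse of the strictly decreasing map $\alpha(\cdot,y)$ on $[0,y]$. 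First I would record a priori estimates valid a.a.s.\ (i.e.\ on the event of Lemmas~\ref{number_positive} and~\ref{pi_prime_estimation}): Stirling together with $k=o(p')$ gives, uniformly for $\ell\in\{0,\dots,\lfloor(1-\epsilon)k\rfloor\}$,
\[
g_\ell=(2R-1)\bigl(1-\tfrac{\ell}{k}\bigr)\ln 2+O(\eta)+o(1),\qquad \frac{\ln((p'-k)/k)}{n_{\mathrm{pos}}}=\bigl(1+O(\eta)+o(1)\bigr)\frac{(2R-1)\ln 2}{k},
\]
the $O(\eta)$ coming from the slack in Lemmas~\ref{number_positive},~\ref{pi_prime_estimation}. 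Since $\tfrac12<R<1$, fixing $\eta=\eta(\epsilon,R)$ small makes $g_\ell$ a fixed distance below $\alpha(0,y_\ell)=(1-\tfrac{\ell}{k})\ln2$ and a fixed distance above $0=\alpha(y_\ell,y_\ell)$, so that $c_-\le x_\ell\le y_\ell-c_+$ and $\tfrac12\le y_\ell\le 1-2^{-\epsilon}$ for constants $c_\pm=c_\pm(\epsilon,R)>0$; hence $\alpha_x,\alpha_y$ are bounded, with $\alpha_x<0$ bounded away from $0$ and $-\infty$, throughout the relevant region.

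Next I would subtract the defining relations for $\ell$ and $\ell+1$ along the segment $t\mapsto\bigl(x_\ell+t(x_{\ell+1}-x_\ell),\,y_\ell+t(y_{\ell+1}-y_\ell)\bigr)$ in $(g,y)$-space: integrating the partials of $\alpha$ gives $g_{\ell+1}-g_\ell=A_\ell(x_{\ell+1}-x_\ell)+B_\ell(y_{\ell+1}-y_\ell)$, with $A_\ell=\int_0^1\alpha_x$, $B_\ell=\int_0^1\alpha_y$ along the segment, so $A_\ell<0$ with $|A_\ell|\le C$ and $B_\ell>0$ bounded. Solving, it suffices to show the numerator $N_\ell:=(g_{\ell+1}-g_\ell)+B_\ell(y_\ell-y_{\ell+1})$ satisfies $N_\ell\ge D'\,\frac{\ln((p'-k)/k)}{n_{\mathrm{pos}}}$ for a constant $D'=D'(\epsilon,R)>0$, since then $x_{\ell+1}-x_\ell=N_\ell/A_\ell\le-(D'/C)\frac{\ln((p'-k)/k)}{n_{\mathrm{pos}}}$. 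I would then substitute the exact formulas $g_{\ell+1}-g_\ell=\frac{1}{n_{\mathrm{pos}}}\ln\frac{(k-\ell)^2}{(\ell+1)(p'-2k+\ell+1)}$ and $y_\ell-y_{\ell+1}=2^{\ell/k-1}(2^{1/k}-1)=(1+o(1))\frac{2^{\ell/k-1}\ln 2}{k}$, use $1-y(t)=(1+O(1/k))2^{\ell/k-1}$ on the segment to write $B_\ell\,2^{\ell/k-1}=(1+O(1/k))\int_0^1\bigl(1-\tfrac{x(t)}{y(t)}\bigr)dt$, and use that on the segment $x(t)=\tilde F(\ell/k)+O(\eta)+o(1)$, $y(t)=Y(\ell/k)+O(1/k)$ by Lipschitzness of $\beta$, where $Y(\lambda):=1-2^{\lambda-1}$ and $\tilde F$ is the first-order curve of Figure~\ref{fig:monoton}, defined by $\alpha(\tilde F(\lambda),Y(\lambda))=(2R-1)(1-\lambda)\ln2$. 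With the asymptotics of the first paragraph, after dividing by $\ln2/k$ the target $N_\ell\ge D'\cdots$ reduces, uniformly in $\ell$, to
\[
1-\frac{\tilde F(\lambda)}{Y(\lambda)}\ \ge\ (2R-1)+c_0\qquad\text{for all }\lambda\in[0,1-\epsilon]
\]
and some constant $c_0=c_0(\epsilon,R)>0$; that is, to $\tilde F(\lambda)<2(1-R)Y(\lambda)$ with a uniform gap.

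This last inequality is the heart of the argument, and the step I expect to be the main obstacle: the change in the first-moment budget $g_\ell$ by itself does \emph{not} force $F_{p'}$ to decrease — indeed $g_{\ell+1}-g_\ell$ is \emph{positive} for small $\ell$ whenever $p'<k^2$ — so the monotonicity must come from the shrinking of $y_\ell$ overpowering the change in $g_\ell$. Since $\alpha(\cdot,Y(\lambda))$ is strictly decreasing and $\tilde F(\lambda)$ solves $\alpha(\tilde F(\lambda),Y(\lambda))=(2R-1)(1-\lambda)\ln2=(2R-1)\ln\frac{1}{1-Y(\lambda)}$, the inequality is equivalent to $\alpha\bigl(2(1-R)Y,Y\bigr)<(2R-1)\ln\frac{1}{1-Y}$ for $Y=Y(\lambda)\in(0,1)$. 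I would prove this by fixing $Y$, putting $\rho:=2(1-R)\in(0,1)$ and $h(\rho):=\alpha(\rho Y,Y)-(1-\rho)\ln\frac{1}{1-Y}$, and observing $h(0)=h(1)=0$ while $h''(\rho)=\frac{Y}{\rho(1-\rho Y)}>0$; strict convexity forces $h(\rho)<0$ for $\rho\in(0,1)$, and since $\rho$ is a fixed constant while $Y=Y(\lambda)$ ranges over the compact set $[\tfrac12,1-2^{-\epsilon}]$, continuity gives a uniform gap $h(\rho)\le-\delta_0<0$, which via the bound on $|\alpha_x|$ yields $\tilde F(\lambda)\le 2(1-R)Y(\lambda)-c_0$.

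What remains is bookkeeping: the uniform Stirling asymptotics for $g_\ell$ and for $\ln\binom{p'-k}{k-\ell}$, checking that the $O(\eta)$ and $O(1/k)$ errors accumulated above stay strictly below the gap $c_0$ (which is why $\eta=\eta(\epsilon,R)$ is fixed small at the outset), and collecting the constants into $D=D(\epsilon,R)>0$ with $x_{\ell+1}-x_\ell\le-D\,\frac{\ln((p'-k)/k)}{n_{\mathrm{pos}}}$. The final ``strictly decreasing'' assertion is then immediate, since a.a.s.\ $\ln\frac{p'-k}{k}\to+\infty$ (using $p'/k\to\infty$ for $R>\tfrac12$, from Lemma~\ref{pi_prime_estimation}).
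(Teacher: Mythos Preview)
Your argument is correct and follows the same overall architecture as the paper's proof: difference the defining relation $\alpha(F_{p'},1-2^{\lambda-1})=g$, linearize via a mean-value/line-integral identity to isolate $F_{p'}(\tfrac{\ell+1}{k})-F_{p'}(\tfrac{\ell}{k})$, use that $\alpha_x$ is negative and bounded away from $0$, and reduce everything to the inequality $F_{p'}(\lambda)<2(1-R)\,(1-2^{\lambda-1})$ with a uniform gap. The paper packages this last step as Lemma~\ref{lem:upper_bound} (together with Lemma~\ref{eta_thing}), while you route through the first-order curve $\tilde F$ and control the passage $F_{p'}\to\tilde F$ by Lipschitzness of the inverse $\beta(\cdot,y)=\alpha(\cdot,y)^{-1}$; both are equivalent once $\eta$ is fixed small.

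The substantive difference is in how the key inequality $\alpha(\rho Y,Y)<(1-\rho)\ln\tfrac{1}{1-Y}$ (with $\rho=2(1-R)$, $Y=1-2^{\lambda-1}$) is established. The paper fixes $u=1-\rho$ and varies $Y$: it shows the auxiliary $G(Y)$ is decreasing, evaluates the limit at $Y=\tfrac12$, and reduces to the binary-entropy inequality $h(w)>2w$ on $(0,\tfrac12)$. Your route fixes $Y$ and varies $\rho$: the function $h(\rho)=\alpha(\rho Y,Y)-(1-\rho)\ln\tfrac{1}{1-Y}$ has $h(0)=h(1)=0$ and $h''(\rho)=\tfrac{Y}{\rho(1-\rho Y)}>0$, so strict convexity gives $h(\rho)<0$ on $(0,1)$ and compactness in $Y\in[\tfrac12,1-2^{-\epsilon}]$ furnishes the uniform gap. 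This is appreciably shorter and more transparent than the paper's derivation, though it yields the same conclusion. The remaining bookkeeping you flag---propagating the $O(\eta)$, $O(1/k)$ errors and absorbing the term $\tfrac{k}{n_{\mathrm{pos}}\ln 2}\ln\tfrac{(k-\ell)^2}{(\ell+1)k}\ge \tfrac{2\ln\epsilon}{\ln(p/k)}\cdot O(1)=o(1)$---is handled in the paper by the explicit chain of estimates following \eqref{calma}; your sketch omits these lines but they present no difficulty.
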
This monotonicity result and Lemma \ref{lem:monot} suggests that possibly $\phi(\ell)$ is decreasing as a function of $\ell$ implying indeed the absence of the OGP for all $R \in (\frac{1}{2},1).$ Yet, to establish this we have to tolerate the error naturally appearing in Theorem \ref{first_moment_calc} and Conjecture \ref{conj}. We are able to show that indeed such a error tolerance is possible and we obtain the following result.

\begin{theorem}\label{thm:OGP}
Let arbitrary $\frac{1}{2}<R<1$ and $k \leq p^{1-c}$ for some $c>0$. Suppose Conjecture \ref{conj} holds. Then for any $\epsilon>0$ the following is true asymptotically almost surely as $p \rightarrow +\infty.$ For any $\zeta_{p'}, W_{p'} \in \{0,1,\ldots,k\}$ with $ \zeta_{p'} \leq (1-\epsilon)k,$ $\zeta_{p'}+W_{p'} \leq k,$  $W_{p'}=\omega(1)$ and $H_{p'}>0$ the $(\zeta_{p'},W_{p'},H_{p'})$-OGP does not hold.
\end{theorem}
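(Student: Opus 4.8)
I would argue by contradiction. Let $\mathcal E$ be the intersection of the a.a.s.\ events supplied by Lemmas~\ref{number_positive} and~\ref{pi_prime_estimation} (for a small fixed $\eta<2R-1$, possible since $R>\tfrac12$), by Theorem~\ref{monotonicity_theorem} and Conjecture~\ref{conj} applied with parameter $\tfrac{\epsilon}{2}$ (giving constants $D=D(\tfrac\epsilon2)>0$ and $C=C(\tfrac\epsilon2)$), and on which $p'\ge 2k$ (which follows from Lemma~\ref{pi_prime_estimation}, since $p'/k\to\infty$). Then $\Pr[\mathcal E]\to 1$, so it suffices to show that on $\mathcal E$ the $(\zeta_{p'},W_{p'},H_{p'})$-OGP fails for \emph{every} admissible choice of parameters; assume for contradiction it holds, on $\mathcal E$, for some $\zeta_{p'}\le(1-\epsilon)k$ with $W_{p'}=\omega(1)$, and some threshold $r_{p'}$.

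\emph{Step 1 — reduce to a narrow, well-placed gap.} We may assume $\zeta_{p'}+W_{p'}\le(1-\tfrac\epsilon2)k$. Indeed, if not then $W_{p'}>\tfrac\epsilon2k$, and we replace $W_{p'}$ by $W'_{p'}:=\lceil k^{1/2}\rceil$ and $H_{p'}$ by $H'_{p'}:=\big(\max_{\|\theta\|_0=k,\ \zeta_{p'}+1\le\langle\theta,\theta^*\rangle\le\zeta_{p'}+W'_{p'}-1}H(\theta)\big)-r_{p'}$: conditions (1) and (2) of the OGP are inherited verbatim for the narrower gap, and condition (3) holds by the definition of $H'_{p'}$, which is positive because every $\theta$ with overlap in $[\zeta_{p'}+1,\zeta_{p'}+W'_{p'}-1]$ has $H(\theta)>r_{p'}$ by condition (1), and such $\theta$ exist since $p'\ge 2k$. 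After this reduction $W_{p'}=\omega(1)$ still holds and $\zeta_{p'}+W_{p'}\le(1-\tfrac\epsilon2)k$.

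\emph{Step 2 — OGP forces non-monotonicity of $\phi$ inside the good range.} Apply Lemma~\ref{lem:monot} with $M:=\zeta_{p'}$ (so $\zeta_{p'}\le M$ and $M+W_{p'}\le k$): there are $u\in\{0,\dots,W_{p'}-1\}$ and an index $\ell^\dagger$ so that, with $a:=\ell^\dagger W_{p'}+u$ and $b:=a+W_{p'}$, one has $\phi(b)>\phi(a)$ while $0\le a<b\le \lfloor M/W_{p'}\rfloor W_{p'}+u\le\zeta_{p'}+W_{p'}-1\le(1-\tfrac\epsilon2)k$, so $a,b$ lie in the range of validity of Theorem~\ref{monotonicity_theorem} and Conjecture~\ref{conj}. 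Telescoping the per-step estimate of Theorem~\ref{monotonicity_theorem} over the $W_{p'}$ steps from $a$ to $b$ and then invoking Conjecture~\ref{conj} at $a$ and $b$,
\[
\phi(b)-\phi(a)\ \le\ n_{\mathrm{pos}}\big(F_{p'}(b/k)-F_{p'}(a/k)\big)+2C\ln k\ \le\ -\,W_{p'}\,D\,\ln\!\big((p'-k)/k\big)+2C\ln k .
\]
On $\mathcal E$, Lemma~\ref{pi_prime_estimation} gives $p'/k\ge(1-\eta)(p/k)^{1-(1+\eta)/(2R)}$ with a positive constant exponent (as $1+\eta<2R$), hence $p'-k=(1-o(1))p'$ and $\ln((p'-k)/k)=\Theta(\ln(p/k))$; since $k\le p^{1-c}$ we get $\ln(p/k)\ge c\ln p\ge c\ln k$. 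As $W_{p'}=\omega(1)$, the first term above is $-\,\omega(\ln k)$, so the whole right-hand side is negative for $p$ large, giving $\phi(b)<\phi(a)$, a contradiction. This proves the theorem.

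\textbf{Where the difficulty lies.} Granting Theorem~\ref{monotonicity_theorem} and Conjecture~\ref{conj}, Step 2's computation (accumulating the drop $D\ln((p'-k)/k)$ over $\omega(1)$ steps so that it beats the $O(\ln k)$ concentration slack) is short, and this is precisely where the hypothesis $k\le p^{1-c}$ enters, via the lower bound on $\ln((p'-k)/k)$. The genuinely delicate point is the bookkeeping that makes Steps~1–2 honest: Lemma~\ref{lem:monot} only detects an increase of $\phi$ among overlaps $\lesssim M$, whereas our estimates for $\phi$ hold only for overlaps $\le(1-\tfrac\epsilon2)k$, so one must first collapse a possibly very wide gap down to width $o(k)$ — which requires verifying the collapsed triple is still an OGP, the subtle input being that condition~(1) at threshold $r_{p'}$ already pins $\phi$ strictly above $r_{p'}$ throughout the shrunk gap — and only then take $M=\zeta_{p'}$; controlling the interplay of $r_{p'}$, the width $W_{p'}$, the overlap bound $(1-\epsilon)k$, and the validity window is the part that demands care.
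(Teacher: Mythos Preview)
Your proof is correct and follows essentially the same route as the paper's: reduce via Lemma~\ref{lem:monot} to a violation of monotonicity of $\phi$ along an arithmetic progression of common difference $W_{p'}$, then combine Conjecture~\ref{conj} with the telescoped decrement of Theorem~\ref{monotonicity_theorem} and the bound $\ln((p'-k)/k)\gtrsim\ln k$ (from $k\le p^{1-c}$) to contradict that violation. Your Step~1 (collapsing a wide gap to one of width $\lceil k^{1/2}\rceil$ so that $a,b\le(1-\tfrac\epsilon2)k$) is a more explicit and cleaner version of what the paper does with a one-line parenthetical ``redefining $W_{p'}$ by $\min\{W_{p'},(1-\epsilon)k\}$ if necessary'' and the choice $M=\lfloor(1-\epsilon)k\rfloor$; in particular you actually verify that the collapsed triple still satisfies the OGP axioms, which the paper leaves implicit.
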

Note that in Theorem \ref{thm:OGP} we require the very weak assumption that $k \leq p^{1-c}$ for some $c>0$ which captures almost all of the sublinear regime $k=o(p).$ We consider this assumption to be of technical nature.

\subsection{Absence of bad local minima}\label{undesirable_local}

Motivated by the evidence for the absence of the OGP for all rates $R<1$ in the landscape of Bernoulli group testing with $\nu$ with $(1-\frac{\nu}{k})^k=\frac{1}{2}$, which is described in the previous section, we now turn our attention to a much more strict notion of optimization landscape smoothness, namely the absence of ``bad"  local minima. The fact that OGP may not appear for any $R<1$, suggests that the absence of bad local minima may hold for reasonable values of $R$ near $1$. If true, this has clear rigorous algorithmic implications such as the certain success of greedy local search methods. Furthermore, partial motivation for studying this notion of optimization complexity is a rigorous understanding of the performance of Glauber Dynamics, the algorithm we described in the Introduction, and which we implemented for our experimental results (and which achieves an almost perfect success rate in solving the corresponding optimization problem for all $R<1$, see Section \ref{simulations}). Indeed,  the absence of bad local minima guarantees the success of the Glauber dynamics algorithm for large values of $\beta$ and, in particular, for $\beta = + \infty$.


%
%
%
%
%
%
%
%
%

\begin{algorithm}\label{greedia}
\begin{algorithmic}[1] 
\Procedure{Greedy Local Search}{$k',\mathrm{PD}, \mathrm{PosTests}$}

\State  $\sigma_0 \leftarrow$ an arbitrary state from $\Omega_{k'}$

\State  $N \leftarrow  |\mathrm{PosTests}|$, $t \leftarrow 0$

\While{$P(\sigma_t) < N$ }

 \State For all possible pairs $(i,j) \in \sigma_t \times ( \mathrm{PD} \setminus \sigma_t)$, let $\tau_{ij} :=  (\sigma_t \cup \{j\} ) \setminus \{i \} $.
 \State Define $\mathcal{T}_{\sigma_t} = \{    \tau_{ij}:   (i,j) \in \sigma_t \times ( \mathrm{PD} \setminus \sigma_t)    \}$
 \State Let $\tau   \in \mathcal{T}_{\sigma_t} \cup \{\sigma_t \}$ be the state with the highest $P$-value (solving ties uniformly at random).
 \State $\sigma_{t+1} := \tau $
 \If { $\sigma_t = \sigma_{t+1} $ }
 \State \Return $\sigma_t$
 \EndIf 
   \State $t \leftarrow t+1$
\EndWhile
\Return  $\sigma_t$

\EndProcedure
\end{algorithmic}
\end{algorithm}

Recall that $\mathrm{PD}$ denotes the set of potentially defective items, that for every integer $k' \in \{1,2 \ldots, p \}$,  $\Omega_{k'}$ denotes the set of all possible subsets of exactly $k'$ items which do not participate in any negative test and that,  for a state ($k'$-tuple) $\sigma \in \Omega_{k'}$,  $P(\sigma)$ denotes the number of positive tests explained by state $\sigma$. We consider the above greedy local search algorithm (which can be seen as an ``aggressive" version of Glauber dynamics).

In the second theorem of the present work we study the optimization landscape of the [approximate] $k'$-SS problem for $k' \ge k$. One potential motivation for studying values of $k'$ that are larger (but still close to) $k$ is to obtain  algorithms for approximate recovery which never introduce false-negatives errors, with high probability. Indeed, we will show that utilizing our theorem we will be able to slightly  improve upon the  state-of-the-art results for this task. 

To formally state our theorem, we need the following definition.

\begin{definition}\label{bad_local_minima}
Let $k,p \in \mathbb{N}$ with $ 1 \leq k \leq p$. Fix parameters $\delta, \epsilon \in [0,1)$, and set $k' = \lfloor (1+\epsilon) k\rfloor$.  We say that a state $\sigma \in \Omega_{k'} $ is a \emph{ $(\delta,\epsilon) $-bad local minimum} if it contains less
than $\lfloor(1-\delta)k \rfloor $ defective items and there exists no state $\tau \in \mathcal{T}_{\sigma}$ such that $P(\tau) \ge P(\sigma) +1$. 
\end{definition}

Our second theoretical contribution is a sufficient condition for the absence of $(\delta,\epsilon)$-bad local minima.

\begin{theorem}\label{local_search_theorem}
Let $k,p \in \mathbb{N}$ with $ 1 \leq k \leq p$. We assume that $k,p \rightarrow +\infty$ with $k=o(p)$. Fix parameters $\nu>0, R \in (0,1),\delta, \epsilon \in [0,1)$ such that $\delta \epsilon > 0$, set $ k' = \lfloor(1+\epsilon)k \rfloor$, and assume that we observe the outcome of  $n=\lfloor \frac{ \log_2 \binom{p}{k}}{R} \rfloor$ tests under non-adaptive Bernoulli group testing in which each item participates in a test with probability $\nu/k$. If 
\begin{align*}
R<\frac{   \nu \mathrm{e}^{-\nu}   }{\ln2} + \max_{\lambda \geq 0} \min_{\zeta \in [0,1- \delta )} Q(\lambda, \zeta, \nu,  \epsilon), 
\end{align*}
where $Q = Q(\lambda, \zeta, \nu, \epsilon)$ is 
 \begin{align}\label{formula}
Q =-\frac{\ln\left(\mathrm{e}^{-\nu(1+\epsilon) }(\mathrm{e}^{\nu(1-\zeta)  (\mathrm{e}^{-\frac{\lambda (1 + \epsilon -\zeta)}{1 -\zeta }}-1) }-1)  +  (1+ \epsilon-\zeta)\nu \mathrm{e}^{-\nu(1+\epsilon)  }(1-\mathrm{e}^{-\nu(1-\zeta)})(\mathrm{e}^{\lambda}-1)  +1 \right)  }{ (1+\epsilon-\zeta) \ln2},
\end{align}
then there exists no $(\delta,\epsilon)$-bad local minimum in $\Omega_{k'}$.
\end{theorem}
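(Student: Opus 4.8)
The plan is a first-moment (union-bound) argument over all candidate bad local minima, organized by their overlap with the planted defective set $\sigma^{*}$. As in Remark~\ref{conditioning_remark}, I would first apply the COMP preprocessing and condition on the set $\mathrm{PD}$ of potentially defective items; by the computation underlying Lemma~\ref{pi_prime_estimation} carried out for a general $\nu$, one has $|\mathrm{PD}| = p\,(k/p)^{(1+o(1))\nu\mathrm{e}^{-\nu}/(R\ln2)}$ a.a.s., and every defective item automatically survives COMP, so $\sigma^{*}\setminus\sigma\subseteq\mathrm{PD}$ for every $\sigma\in\Omega_{k'}$. For fixed $\sigma$ with $\ell:=|\sigma\cap\sigma^{*}|$ and $\zeta:=\ell/k$, a $(\delta,\epsilon)$-bad local minimum has $\zeta\in[0,1-\delta)$; I split the union bound according to $\ell$, for which there are $\binom{k}{\ell}\binom{|\mathrm{PD}|-k}{k'-\ell}$ choices of $\sigma$.

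The core of the proof is an upper bound on $\Pr[\sigma\text{ is a bad local minimum}]$. I would use only the swaps $\tau_{ij}$ deleting a non-defective $i\in N:=\sigma\setminus\sigma^{*}$ (nonempty since $\ell<k\le k'$) and inserting a defective $j\in D:=\sigma^{*}\setminus\sigma$ (nonempty since $\ell<(1-\delta)k$). A direct count of which positive tests change status gives $P(\tau_{ij})-P(\sigma)=G_j-L_i^{(j)}$, where $G_j$ is the number of tests containing $j$ and no item of $\sigma$ (automatically positive, as $j$ is defective, hence independent of $i$) and $L_i^{(j)}$ is the number of positive tests containing $i$ that are $(\sigma\setminus\{i\})$-free and $j$-free, so $L_i^{(j)}\le L_i:=|\{\text{positive tests containing }i,\ (\sigma\setminus\{i\})\text{-free}\}|$. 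Hence being a bad local minimum forces $G_j\le L_i$ for all $i\in N,\ j\in D$, and therefore
\[
\frac{1}{|D|}\sum_{j\in D}G_j\ \le\ \max_{j\in D}G_j\ \le\ \min_{i\in N}L_i\ \le\ \frac{1}{|N|}\sum_{i\in N}L_i,
\]
i.e.\ $\sum_{i\in N}L_i\ge\frac{|N|}{|D|}\sum_{j\in D}G_j$, with $|D|=(1-\zeta)k$ and $|N|=(1+\epsilon-\zeta)k$ up to $O(1)$. This event decomposes additively over the $n$ independent tests: the contribution $A_r$ of test $r$ to $\sum_jG_j$ is the number of $D$-items it contains when it is $\sigma$-free, and the contribution $C_r$ to $\sum_iL_i$ equals $1$ exactly when $r$ contains precisely one item of $\sigma$, that item lies in $N$, and $r$ also contains some item of $D$. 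The crucial structural observation is that the supports of $A_r$ and $C_r$ are disjoint (one requires no item of $\sigma$ present, the other exactly one), so a Chernoff bound with parameter $\lambda\ge0$ yields
\[
\Pr[\sigma\text{ bad}]\ \le\ \Big(\mathbb{E}\big[\mathrm{e}^{\lambda C_r-\theta A_r}\big]\Big)^{n},\qquad \theta:=\lambda\,\frac{|N|}{|D|}=\lambda\,\frac{1+\epsilon-\zeta}{1-\zeta},
\]
and, using $(1-\nu/k)^{k'}\to\mathrm{e}^{-\nu(1+\epsilon)}$, $(1-\nu/k)^{|D|}\to\mathrm{e}^{-\nu(1-\zeta)}$ and Binomial moment generating functions, one evaluates $\mathbb{E}[\mathrm{e}^{\lambda C_r-\theta A_r}]=(1+o(1))\,B(\lambda,\zeta,\nu,\epsilon)$, where $B$ is precisely the expression inside the logarithm in~\eqref{formula}; thus $\Pr[\sigma\text{ bad}]\le\mathrm{e}^{-(1+\epsilon-\zeta)(\ln2)\,n\,Q(\lambda,\zeta,\nu,\epsilon)+o(n)}$.

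It remains to assemble the union bound. Using $n=\lfloor\log_2\binom{p}{k}/R\rfloor\sim\frac{k\ln(p/k)}{R\ln2}$, $\binom{k}{\ell}=\mathrm{e}^{O(k)}$, and (again via the general-$\nu$ version of Lemma~\ref{pi_prime_estimation}) $\ln\binom{|\mathrm{PD}|-k}{k'-\ell}=(1+\epsilon-\zeta)k\big(1-\frac{\nu\mathrm{e}^{-\nu}}{R\ln2}\big)\ln(p/k)+O(k)$, each term of the union bound is at most
\[
\exp\!\Big[(1+\epsilon-\zeta)\,k\ln(p/k)\,\Big(1-\frac{\nu\mathrm{e}^{-\nu}}{R\ln2}-\frac{Q(\lambda,\zeta,\nu,\epsilon)}{R}\Big)+O(k)\Big],
\]
which is $\mathrm{e}^{-\Omega(k\ln(p/k))}$ — hence summable over the $O(k)$ relevant values of $\ell$, and robust to the $p^{O(1)}$ losses coming from the conditioning on $\mathrm{PD}$ — exactly when $R<\frac{\nu\mathrm{e}^{-\nu}}{\ln2}+Q(\lambda,\zeta,\nu,\epsilon)$. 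Imposing this for all $\zeta\in[0,1-\delta)$ and optimizing over $\lambda\ge0$ recovers the hypothesis of the theorem, and the first-moment bound then gives the a.a.s.\ absence of $(\delta,\epsilon)$-bad local minima.

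The step I expect to be the main obstacle is the reduction in the second paragraph: finding a valid route from the purely combinatorial statement ``no single swap increases $P$'' to a sum of per-test independent quantities with a closed-form moment generating function. The delicate points are (i) that restricting to defective-in / non-defective-out swaps suffices and makes $G_j$ independent of $i$; (ii) that replacing $L_i^{(j)}$ by $L_i$ and $\max/\min$ by averages is not wasteful; and — most importantly — (iii) that the two per-test events supporting $A_r$ and $C_r$ are disjoint, which is exactly what collapses the joint moment generating function to the bracket in~\eqref{formula}. The remaining work — the asymptotic evaluation of that moment generating function, the careful handling of the conditioning on $\mathrm{PD}$ and the $1\pm o(1)$ corrections it introduces, and the arithmetic matching the exponent against the union-bound count — is conceptually routine but, as the authors note, technically lengthy.
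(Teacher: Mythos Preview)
Your proposal is correct and follows essentially the same argument as the paper: restrict to swaps that remove a non-defective item and insert a missing defective one, relax the event ``every such swap fails to improve $P$'' to an averaged inequality that decomposes as an i.i.d.\ sum over tests, and combine a Chernoff bound on the per-test moment generating function (which evaluates to the bracket in~\eqref{formula}) with a first-moment union bound over states of fixed overlap. The only cosmetic difference is in how the relaxation is reached---you pass through $\max_j G_j\le\min_i L_i$ and then average, while the paper sums $\sum_{i,j}\Delta(\tau_{ij})\le 0$ and then stochastically dominates the negative part (their Lemma~\ref{dist_lemma} and Corollary~\ref{cor:prob_bound})---but both routes land on exactly the same per-test random variable and hence the same $Q$.
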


A  concrete consequence of Theorem~\ref{local_search_theorem} is the following corollary.  Its proof can be found in Appendix~\ref{CorProof}.
\begin{corollary}\label{no_false_negatives}
Let $k, p \in \mathbb{N}$ with $1 \leq k \leq p$.  We assume that $k,p \rightarrow +\infty$ with $k=o(p)$. Assume that we observe  the outcome of  $n=\lfloor \frac{ \log_2 \binom{p}{k}}{R} \rfloor$ tests under non-adaptive Bernoulli group testing in which each item participates in a test with probability $\ln(5/2)/k$. If  $R  <0.5468 $, then  {\sc Greedy Local Search} with input $k' =  \lfloor(1.01)k \rfloor $  terminates in at most $n$ steps almost surely, and furthermore outputs a $k'$-tuple that contains the $k$ defective items asymptotically almost surely as $p \rightarrow +\infty.$ 
\end{corollary}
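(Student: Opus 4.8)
The plan is to obtain Corollary~\ref{no_false_negatives} as a purely quantitative instantiation of Theorem~\ref{local_search_theorem}. First I would fix the free parameters of that theorem to the values the corollary prescribes: the Bernoulli parameter $\nu=\ln(5/2)$ (so that $e^{-\nu}=2/5$ and $\nu e^{-\nu}/\ln 2=\tfrac25\,\ln(5/2)/\ln 2\approx 0.5288$), $\epsilon=0.01$ (so that $k'=\lfloor(1+\epsilon)k\rfloor=\lfloor 1.01k\rfloor$), and a constant $\delta=\delta_0\in(0,1)$ to be chosen. With these choices the hypothesis of Theorem~\ref{local_search_theorem} becomes $R<\tfrac25\ln(5/2)/\ln2+\max_{\lambda\ge 0}\min_{\zeta\in[0,1-\delta_0)}Q(\lambda,\zeta,\ln(5/2),0.01)$, so the entire content of the corollary is to exhibit a constant $\delta_0>0$ and a witness $\lambda_0\ge 0$ for which $\min_{\zeta\in[0,1-\delta_0)}Q(\lambda_0,\zeta,\ln(5/2),0.01)\ \ge\ 0.5468-\tfrac25\ln(5/2)/\ln2$.

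The heart of the proof is therefore an analysis of the one–parameter family $\zeta\mapsto Q(\lambda_0,\zeta,\ln(5/2),0.01)$ on $[0,1-\delta_0)$. I would first pin down a good $\lambda_0$ from the structure of the closed form \eqref{formula}: the inner $\min$ over $\zeta$ is controlled by the behaviour at the two ends of the interval and by the sign of the bracketed expression inside the logarithm, so one wants $\lambda_0$ chosen so that $Q(\lambda_0,\cdot)$ stays nonnegative and bounded away from $0$ throughout. Having fixed $\lambda_0$, I would verify the required lower bound on $Q(\lambda_0,\zeta,\cdot)$ for all $\zeta\in[0,1-\delta_0)$ — e.g.\ by determining the sign of $\partial_\zeta Q$ (monotonicity) and then evaluating $Q$ at the binding endpoint, or by a direct second–order estimate — and simultaneously fix $\delta_0$ as the (smallest admissible) value for which this inequality still holds with margin. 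This is the step I expect to be the main obstacle: the expression \eqref{formula} is genuinely unwieldy, it is precisely the particular values $\nu=\ln(5/2)$ and $\epsilon=0.01$ that make the bound come out above $0.5468$, and one has to be careful about how small $\delta_0$ can be taken (since $Q\to 0$ as $\zeta\to 1$, $\delta_0$ cannot be driven to $0$ while keeping $R$ near $0.5468$).

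Once the inequality on $R$ is verified, Theorem~\ref{local_search_theorem} gives that a.a.s.\ there is no $(\delta_0,0.01)$-bad local minimum in $\Omega_{k'}$, and the two algorithmic claims follow. For termination: along any run of {\sc Greedy Local Search} (Algorithm~\ref{greedia}) the value $P(\sigma_t)$ strictly increases at every iteration that does not trigger a \textbf{return}, because the update moves to the neighbour in $\mathcal T_{\sigma_t}$ of largest $P$-value and the run stops the moment this fails to be an improvement; since $0\le P(\sigma_t)\le|\mathrm{PosTests}|\le n$, the run stops after at most $n$ iterations — deterministically, hence a fortiori almost surely. For the no–false–negatives guarantee: the terminal state $\hat\sigma$ admits no $\tau\in\mathcal T_{\hat\sigma}$ with $P(\tau)\ge P(\hat\sigma)+1$, so by the absence of $(\delta_0,0.01)$-bad local minima it contains at least $\lfloor(1-\delta_0)k\rfloor$ defectives; I would then upgrade this to "$\hat\sigma$ contains all $k$ defectives" by exploiting the surplus of $\lfloor\epsilon k\rfloor$ slots in $k'$ together with a short first-moment/union-bound argument that rules out, a.a.s., the configurations at a local maximum of $P$ in which a still-missing defective $d$ is masked — concretely, for each defective $d\notin\hat\sigma$ one produces a swap of a non-defective element of $\hat\sigma$ for $d$ that does not decrease $P$, contradicting local maximality unless no defective is missing. (This last step is the secondary subtlety and is the one I would spell out carefully in Appendix~\ref{CorProof}.) Putting the pieces together yields Corollary~\ref{no_false_negatives}.
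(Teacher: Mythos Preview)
Your plan is structurally right (instantiate Theorem~\ref{local_search_theorem} with $\nu=\ln(5/2)$, $\epsilon=0.01$, pick a witness $\lambda$, analyze $Q$ in $\zeta$), but it diverges from the paper in one decisive way and that divergence creates a genuine gap.

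The paper takes $\delta=0$, not some $\delta_0>0$. This is permitted: the hypothesis ``$\delta\epsilon>0$'' in Theorem~\ref{local_search_theorem} is used in the proof only as $\max\{\delta,\epsilon\}>0$ (see the sentence after the bound on $\ln\binom{k}{\ell}\binom{q}{k'-\ell}$ in Section~\ref{local_search_proof} and the proof of Lemma~\ref{SwapLimMax}), and $\epsilon=0.01>0$ already secures this. With $\delta=0$, a $(0,0.01)$-bad local minimum is by Definition~\ref{bad_local_minima} exactly a local maximum of $P$ in $\Omega_{k'}$ that contains fewer than $\lfloor k\rfloor=k$ defectives, i.e.\ that misses at least one defective. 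Hence ``no $(0,0.01)$-bad local minimum'' is \emph{already} the statement that every terminal state of {\sc Greedy Local Search} contains all $k$ defectives. No upgrade step is needed.

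Your proposed upgrade --- ``for each missing defective $d$ produce a swap of a non-defective for $d$ that does not decrease $P$, contradicting local maximality'' --- is precisely the assertion that no local maximum misses a defective, i.e.\ the absence of $(0,\epsilon)$-bad local minima. It is not a separate first-moment lemma you can bolt on after only knowing absence of $(\delta_0,\epsilon)$-bad local minima for some fixed $\delta_0>0$; as written it is circular. So either you run Theorem~\ref{local_search_theorem} at $\delta=0$ (as the paper does) or you need a genuinely new argument for the last $\delta_0 k$ defectives, which you have not supplied.

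On the $Q$-analysis the paper is also more concrete than your outline: it fixes the explicit witness $\lambda=\ln(100/83)$ and proves (Lemma~\ref{monotonicity} in Appendix~\ref{CorProof}) that $\zeta\mapsto Q(\lambda,\zeta,\ln(5/2),0.01)$ is increasing on $[0,1)$, so the minimum over $\zeta\in[0,1)$ equals $Q(\lambda,0,\ln(5/2),0.01)\approx 0.018$, and $0.5288+0.018>0.5468$. Your concern that ``$Q\to 0$ as $\zeta\to 1$, so $\delta_0$ cannot be driven to $0$'' is exactly what the paper's monotonicity lemma is meant to rule out; under that lemma the infimum over $[0,1)$ is attained at the left endpoint, and taking $\delta=0$ is harmless. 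If you believe your limiting claim, then the disagreement is with Lemma~\ref{monotonicity} itself, not with the reduction strategy --- but either way the route through $\delta_0>0$ plus an ad hoc upgrade is not the paper's argument and, as you have sketched it, does not close.
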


%
%

\section{Experimental results}\label{simulations}

In this section we present experimental results that provide further evidence suggesting the absence of a computational-statistical gap in the Bernoulli  group testing problem with $\nu$ such that $(1-\frac{\nu}{k})^k=\frac{1}{2}$ and the tractability of finding a satisfying set for all $R \leq 1$. In particular, our experiments are in agreement with the prediction from the absence of the OGP for all $R<1.$
The key insights from our experimental results can be summarized as follows.

\begin{enumerate}

\item The $k$-SS problem is efficiently solvable to exact optimality via a simple local search algorithm even at rate $R =1$. 

\item Solving the SSS  problem via a local search method is efficient and outperforms other popular algorithms in terms of approximate recovery.

\end{enumerate}

We give more details forthwith. We note that all our experiments were performed in a  MacBookPro with a 2.3 GHz 8-Core Intel Core i9 Processor and 16 GB 2667 MHz DDR4 RAM.

In Figure~\ref{Optimization} we demonstrate the performance of Glauber Dynamics for solving the $k$-SS problem  on instances where $k = \lfloor p^{1/3} \rfloor$ and $\nu$ satisfies $(1- \nu/k)^k =1/2$.  While the choice of $\nu$ should already be well-motivated, the choice of $k$ is chosen because $k = \lfloor p^{1/3} \rfloor$ corresponds the largest power of $p$ for which, not only approximate, but even exact recovery is theoretically possible for any $R<1$ by the Bernoulli design of choice  \cite{survey}. Now each data point that corresponds to a particular value of the inverse rate $R^{-1}$ is computed by creating $100$ Bernoulli group testing instances and counting how many times the algorithm succeeds (where  success refers to solving the optimization, exact- and approximate-recovery task, respectively). The gray lines, which support our first point above,  correspond to the probability that Glauber Dynamics successfully solves the $k$-SS problem to exact optimality (i.e. finds a satisfying set). Remarkably, we observe an almost perfect success rate in this task ($>99.7\%$).   The orange lines correspond to the probability of successful exact recovery, while the blue lines correspond to the probability of successful 90\%-approximate recovery.  Note that in terms of exact/approximate recovery the algorithm is not succeeding with high probability when $R$ is close to 1, albeit always outputing a satisfying set, seemingly opposing the theoretical results (Lemma~\ref{key_approx_lemma}). We naturally consider this an artifact of considering finite $p$, which is supported by the second diagram of Figure~\ref{Comparison}, where as $p$ increases, the success probability of the approximate recovery task (of solving the harder SSS problem, which does not know the value of $k$) at a given fixed rate $R<1$ also increases, as predicted by Lemma~\ref{key_approx_lemma}.


In Figure~\ref{Comparison} we demonstrate how our proposed algorithm for solving now the SSS problem (solving the $k'$-SS problem for $k'=0,1,\ldots,p$ and performing binary search) compares to a series of known popular algorithms for $90\%$-approximate recovery.  Again, we choose $k = \lfloor p^{1/3} \rfloor $, $\nu$ that satisfies $(1- \nu/k)^k = 1/2$, and we compute each data point by creating $100$ Bernoulli group testing instances and counting how many times the algorithm succeeds in terms of the $90\%$-approximate-recovery task. 
The algorithms we compare our approach with are the following. (Some of them we have already discussed, but we include their description here again for completeness. The reader is also referred to~\cite{survey} for more details.)

\begin{itemize}
\item \emph{Combinatorial orthogonal matching pursuit  (COMP)}: In the COMP algorithm we remove every item that takes part in a negative test, and output the rest. Each item in the output of COMP is called \emph{potentially defective} (PD). 

\item  \emph{Definite defective (DD)}: In the DD algorithm we output the set of PD items, which have the property that they are the only PD item in a certain positive test. Each item in the output of DD is called \emph{definite defective} (DD).

\item \emph{Sequential COMP (SCOMP)}: The SCOMP algorithm is defined as follows. (Recalling Remark~\ref{set_cover}, SCOMP is essentially  Chvatal's greedy approximation algorithm for Set Cover.)

\begin{enumerate}
\item Initialize $S$ to be the set of DD items.

\item Say that a positive test is \emph{unexplained} if it does not contain any items from $S$. Add to $S$ the PD item not in $S$ that is in the most unexplained tests, and mark the corresponding tests as no longer unexplained. (Ties may be broken arbitrarily).

\item Repeat Step 2 until no tests remain unexplained. The estimate of SCOMP is $S$.
\end{enumerate}

\item \emph{Max Degree (MD) }:  The MD algorithm sorts the PD items in decreasing order with respect to how many positive tests each such item takes part in. It then outputs the $k$ first PD items in this order.

\end{itemize}

As a final remark,  our results are  in agreement with earlier experimental findings on Markov Chain Monte Carlo algorithms for noisy group testing settings in applied contexts, such as computational biology~\cite{knill1996interpretation,schliep2003group} and security~\cite{furon2012decoding}, but we note that our work is the first one to provide a concrete theoretical explanation for  their strong performance in simulations, and a principled way to exploit local algorithms for implementing the SSS estimator.

\section{Proofs related to the Overlap Gap Property}\label{OGP_proof}

In this section we prove Theorems~\ref{first_moment_calc},~\ref{monotonicity_theorem} and~\ref{thm:OGP} .  

We will find helpful the following  technical results regarding the relative entropy function $\alpha(x,y) := x\ln \frac{x}{y} + (1-x)  \ln \frac{1- x }{1-y } $, $x, y \in [0,1]$.

\begin{lemma}\label{deviation} 
Let $x,y \in (0,\frac{1}{2}]$ with $x<y$ be fixed and $N \rightarrow \infty$. We have
\begin{align*}
\Pr[ \mathrm{Bin}(N,y)  \le xN ]  = \mathrm{e}^{- N \alpha(x,y)  + O(\ln N  )}.
\end{align*}
\end{lemma}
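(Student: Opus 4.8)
The plan is to establish matching lower and upper bounds on $\Pr[\mathrm{Bin}(N,y) \le xN]$ whose exponents agree to within an additive $O(\ln N)$, which is exactly what the displayed estimate asserts. Since $x,y \in (0,\tfrac12]$ are fixed, both are bounded away from $0$ and $1$, so all the Stirling-type estimates below are uniform in the relevant range.

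For the upper bound I would use the standard Chernoff/exponential-Markov argument. For any $s<0$,
\begin{align*}
\Pr[\mathrm{Bin}(N,y) \le xN] \le \mathrm{e}^{-sxN}\, \mathbb{E}\bigl[\mathrm{e}^{s\,\mathrm{Bin}(N,y)}\bigr] = \mathrm{e}^{-sxN}\bigl(1 - y + y\mathrm{e}^{s}\bigr)^{N}.
\end{align*}
Minimizing the right-hand side over $s$ — the optimizer is $\mathrm{e}^{s} = \tfrac{x(1-y)}{y(1-x)} < 1$, which is admissible precisely because $x<y$ — yields after simplification the clean bound $\Pr[\mathrm{Bin}(N,y) \le xN] \le \mathrm{e}^{-N\alpha(x,y)}$, i.e.\ an upper bound with no polynomial prefactor at all.

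For the lower bound I would bound the sum below by a single well-chosen term. Writing $b_j := \binom{N}{j} y^{j}(1-y)^{N-j}$, the ratio $b_{j+1}/b_j = \tfrac{(N-j)y}{(j+1)(1-y)}$ exceeds $1$ whenever $j$ is below roughly $Ny$; since $x<y$, for all $j \le xN$ and all sufficiently large $N$ the sequence $(b_j)$ is increasing, so its maximum over $j \le xN$ is attained at $j_0 := \lfloor xN \rfloor$. Hence $\Pr[\mathrm{Bin}(N,y) \le xN] \ge b_{j_0}$. Applying Stirling's formula gives $\binom{N}{j_0} = \mathrm{e}^{N H(x) + O(\ln N)}$ with $H(x) := -x\ln x - (1-x)\ln(1-x)$, and directly $y^{j_0}(1-y)^{N-j_0} = \mathrm{e}^{N(x\ln y + (1-x)\ln(1-y)) + O(1)}$. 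Combining these and using the identity $H(x) + x\ln y + (1-x)\ln(1-y) = -\alpha(x,y)$ gives $b_{j_0} = \mathrm{e}^{-N\alpha(x,y) + O(\ln N)}$, hence the matching lower bound on the probability.

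Putting the two bounds together yields $\Pr[\mathrm{Bin}(N,y) \le xN] = \mathrm{e}^{-N\alpha(x,y) + O(\ln N)}$, as claimed. There is no serious obstacle here; the only points requiring mild care are (i) verifying the monotonicity $b_{j+1} \ge b_j$ for all $j \le xN$ (for all large $N$, not merely asymptotically at one index), and (ii) tracking the $O(\ln N)$ Stirling error uniformly, which is routine since $x$ is a fixed constant strictly inside $(0,1)$.
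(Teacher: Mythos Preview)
Your proof is correct. The paper does not actually prove this lemma; it simply cites Lemma~2 in \cite{balister2019dense}. Your argument---Chernoff for the upper bound and a single largest term plus Stirling for the lower bound---is exactly the standard route and is essentially what that reference does as well, so there is nothing to compare.
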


\begin{lemma}\label{partial_derivatives}
The partial derivatives of $\alpha(x,y)$ with respect to $x$ and $y$ are given by the following expressions:
\begin{eqnarray*}
\frac{ \partial }{ \partial x} \alpha(x,y)& = & \ln\left( \frac{ x}{1-x}  \right) - \ln\left( \frac{y}{1-y}  \right); \\
\frac{ \partial }{ \partial y} \alpha(x,y)& = & - \frac{x}{y}  + \frac{1 -x }{ 1- y }.
\end{eqnarray*}
\end{lemma}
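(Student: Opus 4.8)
The statement is a routine exercise in single-variable calculus applied coordinatewise, so the plan is simply to differentiate the closed form obtained by expanding the logarithms of quotients,
\[
\alpha(x,y) = x\ln x - x\ln y + (1-x)\ln(1-x) - (1-x)\ln(1-y),
\]
which is well defined and smooth on $(0,1)^2$, the range of interest for the relative entropy.

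First I would compute $\partial\alpha/\partial x$ term by term. The product rule gives $\partial_x(x\ln x) = \ln x + 1$ and $\partial_x\big((1-x)\ln(1-x)\big) = -\ln(1-x) - 1$, while the two terms that are linear in $x$ contribute $-\ln y$ and $+\ln(1-y)$, respectively. Adding these four expressions, the constants $+1$ and $-1$ cancel, and regrouping the logarithms yields $\ln\frac{x}{1-x} - \ln\frac{y}{1-y}$, which is the first claimed identity.

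Next, for $\partial\alpha/\partial y$ only the two terms containing $y$ survive; the chain rule gives $\partial_y(-x\ln y) = -x/y$ and $\partial_y\big(-(1-x)\ln(1-y)\big) = (1-x)/(1-y)$, and their sum is $-\frac{x}{y} + \frac{1-x}{1-y}$, the second claimed identity.

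There is no genuine obstacle here; the only points that require a little care are the cancellation of the $\pm 1$ constants in the $x$-derivative and the sign bookkeeping from the chain rule on the $\ln(1-\cdot)$ terms. As a consistency check one may observe that both partial derivatives vanish at $x=y$, as expected since $\alpha(\cdot,\cdot)$ is a nonnegative Kullback--Leibler divergence attaining its minimum along the diagonal.
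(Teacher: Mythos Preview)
Your proposal is correct and is essentially the same approach as the paper, which simply states that the lemma ``follows trivially by direct calculations.'' Your term-by-term differentiation with the cancellation check and the consistency remark at $x=y$ is exactly the kind of direct computation the paper has in mind.
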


\begin{lemma}\label{lem:tayl}
For any $\delta \in (0,1),$ and $c_0=c_0(\delta)>0$ it holds that for all   $ x_0,x,y >0 $ such that $x <  (1-\delta) y$, $x_0 < x$, $y \le \frac{1}{2}$,    we have:  
\begin{align*}
\frac{ \partial }{ \partial x} \alpha(x,y) \le - c_0
\end{align*}
As a consequence:
\begin{align*}
    \alpha(x-x_0,y) \geq \alpha(x,y)+c_0 x_0.
\end{align*}
\end{lemma}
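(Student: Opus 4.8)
The plan is to reduce everything to the explicit formula for the partial derivative from Lemma~\ref{partial_derivatives}, namely
\[
\frac{\partial}{\partial x}\alpha(x,y) \;=\; \ln\!\frac{x}{1-x}-\ln\!\frac{y}{1-y}\;=\;\ln\!\frac{x(1-y)}{y(1-x)},
\]
and then to bound this logarithm using only the hypothesis $x<(1-\delta)y$ together with $0<y\le \frac12$.

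First I would prove the pointwise derivative bound. Split the argument of the logarithm as $\frac{x(1-y)}{y(1-x)}=\frac{x}{y}\cdot\frac{1-y}{1-x}$. The hypothesis $x<(1-\delta)y$ gives $\frac{x}{y}<1-\delta$; moreover $x<(1-\delta)y<y$ (using $\delta\in(0,1)$, $y>0$), so $1-x>1-y>0$ and hence $\frac{1-y}{1-x}<1$. Here $y\le\frac12$ and $0<x<(1-\delta)y\le\frac{1-\delta}{2}$ guarantee that all the quantities $x,y,1-x,1-y$ lie strictly in $(0,1)$, so every logarithm above is legitimate. Multiplying the two bounds yields $\frac{x(1-y)}{y(1-x)}<1-\delta$, whence $\frac{\partial}{\partial x}\alpha(x,y)<\ln(1-\delta)$. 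Taking
\[
c_0=c_0(\delta):=\ln\frac{1}{1-\delta}>0
\]
then gives $\frac{\partial}{\partial x}\alpha(x,y)\le -c_0$ (in fact with strict inequality), which depends only on $\delta$ as required.

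For the stated consequence I would integrate this bound. By the fundamental theorem of calculus,
\[
\alpha(x,y)-\alpha(x-x_0,y)=\int_{x-x_0}^{x}\frac{\partial}{\partial u}\alpha(u,y)\,du .
\]
The key point to verify is that every $u$ in the interval $[x-x_0,x]$ satisfies the hypotheses of the first part: indeed $u\le x<(1-\delta)y$, and $u\ge x-x_0>0$ since $x_0<x$, while $y\le\frac12$ is unchanged; in particular each such $u$ lies in $(0,1)$, so $\alpha(\cdot,y)$ is smooth on the whole segment. Applying the derivative bound termwise gives $\int_{x-x_0}^{x}\frac{\partial}{\partial u}\alpha(u,y)\,du\le -c_0 x_0$, i.e.\ $\alpha(x-x_0,y)\ge\alpha(x,y)+c_0 x_0$.

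There is no substantive obstacle in this argument; the only things that need care are (i) checking that $x,y,1-x,1-y$ remain strictly inside $(0,1)$ so that all the logarithms (and the formula of Lemma~\ref{partial_derivatives}) are valid, which follows from $0<x<(1-\delta)y\le\frac{1-\delta}{2}$ and $0<y\le\frac12$, and (ii) confirming that the entire integration segment $[x-x_0,x]$ stays in this admissible region so the uniform derivative bound can be applied along it.
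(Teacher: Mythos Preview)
Your proposal is correct and essentially identical to the paper's proof: both plug in the formula from Lemma~\ref{partial_derivatives}, use $x<(1-\delta)y$ and $x<y$ to bound the ratio $\frac{x(1-y)}{y(1-x)}$ by $1-\delta$, and take $c_0=\ln\frac{1}{1-\delta}$. For the consequence the paper invokes the mean value theorem while you integrate via the fundamental theorem of calculus, but this is the same argument.
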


For the proof of Lemma~\ref{deviation} see e.g. Lemma 2 in~\cite{balister2019dense}. Lemma~\ref{partial_derivatives} follows trivially by direct calculations, and Lemma~\ref{lem:tayl} is shown in Appendix~\ref{omitted_formal}.

\subsection{Proof of Theorem~\ref{first_moment_calc}}

Recall that for  any $\theta \in \{0,1\}^{p'}$ with $\|\theta\|_0=k$ we denote by $H(\theta)$ the number of unexplained tests with respect to $\theta$.  Denote also by $\theta^*$ the unknown binary $k$-sparse vector supported on the true defective items and notice that $H(\theta^*)=0.$
Finally, recall Remark~\ref{conditioning_remark}.

Now for $t, \ell \ge 0$, let
\begin{align*}
Z_{\ell,t}=  \left|  \{ \theta \in \{0,1\}^{p'}: \| \theta \|_0 = k, \langle \theta, \theta^* \rangle = \ell, H(\theta)  \le t \}   \right|
\end{align*}and observe that
$$\phi(\ell) \le t \Leftrightarrow Z_{\ell,t}\geq 1.$$
In particular,  by Markov's inequality, for all $t>0$ and $\ell \in \{0,1,2,\ldots, \lfloor(1-\epsilon) k \rfloor \}$:
\begin{align}\label{firstOGPtutu}
\Pr\left[\phi(\ell) \le t\right] = \Pr[  Z_{\ell,t } \ge 1  ]  \leq \mathbb{E}\left[Z_{t,\ell}\right].
\end{align}
Therefore, in order to prove Theorem~\ref{first_moment_calc} it  suffices to show that there exists an appropriate large constant $C = C(\epsilon) >0$ such that if
 \begin{align*}
 t_{\ell}:=n_{\mathrm{pos}} F_{p'}\left(\frac{\ell}{k}\right)-C\ln k, 
 \end{align*} 
 then
 \begin{align}\label{firstOGP_goal}
   \lim_{p \rightarrow +\infty} \sum_{\ell=0}^{\lfloor (1-\epsilon)k\rfloor } \mathbb{E}\left[Z_{t_{\ell},\ell}\right] =0.
\end{align}
To see this notice that combining~\eqref{firstOGPtutu} and~\eqref{firstOGP_goal} implies  that  for all $\ell \leq (1-\epsilon)k$ it holds $\phi(\ell) \ge  n_{\mathrm{pos}} F_{p'}\left(\frac{\ell}{k}\right)-C\ln k$ asymptotically almost surely, which is our claim.

Towards that end, fix  $\ell \in \{ 0,1,2,\ldots, \lfloor(1-\epsilon) k \rfloor \}$ and $\theta \in \{0,1\}^{p'}$ such that  $\| \theta \|_0 = k$ and $ \langle \theta, \theta^* \rangle = \ell$, and observe that by the linearity of expectation:
\begin{align}\label{first_first}
\ex[ Z_{t_{\ell},\ell} ]  = {k \choose \ell} { p' - k\choose k-\ell  } \Pr[  H(\theta) \le t_{\ell} ].
\end{align}

\begin{lemma}\label{OGP_dist}
 $H(\theta)$ is distributed as a binomial random variable $\mathrm{Bin}(n_{\mathrm{pos} }, 1 - 2^{- (1 - \frac{\ell}{k} ) })$.
\end{lemma}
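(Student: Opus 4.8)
The plan is to compute the distribution of $H(\theta)$ directly from the Bernoulli test design, conditioned on the values of $\mathcal{P}$ and $\mathrm{PD}$ as described in Remark~\ref{conditioning_remark}. Recall that $H(\theta)$ counts the positive tests $i \in \mathcal{P}$ with $\langle X_i, \theta\rangle = 0$, i.e.\ those positive tests in which none of the $k$ items in the support of $\theta$ participates. First I would argue that, across the $n_{\mathrm{pos}}$ positive tests, the events $\{\langle X_i,\theta\rangle = 0\}$ are independent, so that $H(\theta)$ is a sum of $n_{\mathrm{pos}}$ i.i.d.\ Bernoulli variables and hence binomial; the parameter is $\Pr[\langle X_i,\theta\rangle = 0 \mid i \in \mathcal{P}]$.

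The main computation is this conditional probability. For a single test $i$, each item is included independently with probability $\nu/k$. Let $\sigma^*$ be the defective set and $\sigma = \mathrm{supp}(\theta)$ with $|\sigma \cap \sigma^*| = \ell$. The test is positive iff at least one defective item is included, and it is unexplained by $\theta$ iff no item of $\sigma$ is included. One then writes
\begin{align*}
\Pr[\langle X_i,\theta\rangle = 0 \text{ and } i \in \mathcal{P}] = \Pr[\text{no item of } \sigma \text{ included, but some item of } \sigma^* \text{ included}].
\end{align*}
Splitting $\sigma^* = (\sigma^* \cap \sigma) \sqcup (\sigma^* \setminus \sigma)$, the event ``no item of $\sigma$ included'' already forces the $\ell$ items of $\sigma^*\cap\sigma$ to be absent, so the residual requirement is that at least one of the $k - \ell$ items of $\sigma^*\setminus\sigma$ is included. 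By independence this equals $(1-\nu/k)^{k}\bigl(1 - (1-\nu/k)^{k-\ell}\bigr)$, while $\Pr[i \in \mathcal{P}] = 1 - (1-\nu/k)^{k}$. Dividing and using the defining relation $(1-\nu/k)^{k} = \tfrac12$ gives
\begin{align*}
\Pr[\langle X_i,\theta\rangle = 0 \mid i \in \mathcal{P}] = \frac{\tfrac12\bigl(1 - 2^{-(1-\ell/k)}\bigr)}{\tfrac12} = 1 - 2^{-(1-\ell/k)},
\end{align*}
which is exactly the claimed success parameter.

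The only subtlety — and the step I would be most careful about — is the independence claim and the precise meaning of the conditioning. Since $\mathcal{P}$ and $\mathrm{PD}$ are themselves random, ``conditioning on $\mathcal{P}$ and $\mathrm{PD}$'' as in Remark~\ref{conditioning_remark} must be interpreted correctly: once we fix which tests are positive and which items survive COMP, the rows $X_i$ for $i\in\mathcal P$ are still independent across $i$ (the Bernoulli design makes distinct tests independent, and conditioning on each test's outcome being positive acts test-by-test), and restricting columns to $\mathrm{PD}$ does not affect the entries indexed by $\sigma \subseteq \mathrm{PD}$ in the relevant way because $\theta$ and $\theta^*$ are supported inside $\mathrm{PD}$. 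A clean way to present this is to note that $H(\theta) = \sum_{i \in \mathcal P} \mathbbm{1}[\langle X_i,\theta\rangle = 0]$ where each summand depends only on the $i$-th test's randomness conditioned on that test being positive, and these pieces of randomness are mutually independent; hence $H(\theta) \sim \mathrm{Bin}\bigl(n_{\mathrm{pos}}, 1 - 2^{-(1-\ell/k)}\bigr)$. With Lemma~\ref{OGP_dist} in hand, the earlier large-deviation estimate (Lemma~\ref{deviation}) applies to $\Pr[H(\theta) \le t_\ell]$ and, together with~\eqref{first_first} and the definition of $F_{p'}$, yields~\eqref{firstOGP_goal}.
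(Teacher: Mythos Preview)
Your proposal is correct and follows essentially the same approach as the paper: both compute $\Pr[\langle X_i,\theta\rangle = 0 \mid i\in\mathcal P]$ via the decomposition $(1-\nu/k)^k\bigl(1-(1-\nu/k)^{k-\ell}\bigr)/\bigl(1-(1-\nu/k)^k\bigr)$ and simplify using $(1-\nu/k)^k=\tfrac12$, then invoke test-wise independence to conclude the binomial law. Your added discussion of why conditioning on $\mathrm{PD}$ does not disturb the positive-test marginals is a welcome clarification that the paper's proof leaves implicit.
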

\begin{proof}
Recall that each item takes part in a certain positive test independently of the other items and tests and that we have conditioned on the value of the random variable $\mathcal{P}$, namely the set of indices of the positive tests.
For any fixed $i \in \mathcal{P}$:
\begin{eqnarray*}
\Pr[   \langle X_i, \theta \rangle = 0   \mid i \in \mathcal{P}   ]  &=& \frac{ \Pr[   (\langle X_i, \theta \rangle = 0   )\wedge  ( i \in \mathcal{P}  ) ]    }{ \Pr[ i \in  \mathcal{P}  ]  }  \\
											    & = &  \frac{ (1- \frac{\nu}{k})^k \left( 1 - (1- \frac{\nu }{k }   )^{k-\ell }   \right)      }{ 1 - \left( 1- \frac{\nu}{k} \right)^{k} } \\
											    & = &  \frac{ \frac{1}{2}  \left( 1 - (1- \frac{\nu }{k }   )^{k-\ell }   \right)      }{ \frac{1}{2} } \\
											    & =& 1 - 2^{- (1 - \frac{\ell}{k} ) },
\end{eqnarray*}
concluding the proof. Note that in the above calculation $\Pr[  \cdot ]$ denotes the probability with respect to the original, unconditional  probability space. Recall also that we have chosen $\nu$ so that $(1- \nu/k)^k = 1/2$, a fact we use in the third line of the above calculation.


\end{proof}
Combining~\eqref{first_first}, Lemma~\ref{OGP_dist}, and Lemma~\ref{deviation} (with  $N= n_{\mathrm{pos} }$,  $x = \frac{ t_{\ell} }{  n_{\mathrm{pos} }}$ and $y =  1 - 2^{- (1 - \frac{\ell}{k} ) }$  ) we obtain:
\begin{align*}
\ex[ Z_{t_\ell,\ell} ]  \le {k \choose \ell} { p'-k \choose k-\ell  } \mathrm{e}^{- n_{ \mathrm{pos}} \alpha\left( F_{p'}\left(\frac{\ell}{k}\right)- \frac{ C\ln k}{ n_{\mathrm{pos} } }     , 1-2^{-(1- \frac{\ell}{k})    } \right)  +O( \ln n_{\mathrm{pos} }) }.
\end{align*}
For large enough $p$, and therefore large enough $n_{\mathrm{pos} }$ according to   Lemma~\ref{number_positive},  we can apply Lemma~\ref{lem:tayl} with $x =  \frac{ t_{\ell} }{  n_{\mathrm{pos} }} $, $x_0 =  \frac{C \ln k }{n_{\mathrm{pos} } } $,  $y = 1 -  2^{- (1 - \frac{\ell}{k} ) } $,    $\delta =  2^{- (1 - \frac{\ell}{k} ) } $ to get:
\begin{align*}
\ex[ Z_{t_{\ell},\ell} ]  \le {k \choose \ell} { p'-k \choose k-\ell  } \mathrm{e}^{-n_{\mathrm{pos} } \alpha \left( F_{p'}\left(\frac{\ell}{k}\right)  , 1-2^{-(1- \frac{\ell}{k})    } \right)  -c_0C\ln k  +O( \ln n_{\mathrm{pos} })  } \le  \mathrm{e}^{-c_0C\ln k  +O( \ln n_{\mathrm{pos} }) }.
\end{align*}
where for the second inequality we used the definition of the first moment function $F_{p'}$ and, in particular,~\eqref{func_prop}.

Overall,
\begin{align*}
\sum_{\ell=0}^{\lfloor (1-\epsilon)k \rfloor} \ex[ Z_{t_{\ell}, \ell} ]  \le  k\mathrm{e}^{-c_0C\ln k  + O(  \ln n_{\mathrm{pos} })    }
\end{align*}
which indeed tends to zero for sufficiently large $C$ since, from our conditioning, $n_{\mathrm{pos} } =O(n) = O( k \ln (p/k)  )$ and $k = \Theta(p^{\alpha})$ for some constant $\alpha>0$,  concluding the proof.

\subsection{Proof of Theorem~\ref{monotonicity_theorem}}


We start by showing the following technical lemmas in Appendix~\ref{omitted_formal}.

\begin{lemma}\label{eta_thing}
Suppose $\frac{1}{2}<R<1$. There exists $\delta=\delta(\epsilon)>0$  such that a.a.s. as $p \rightarrow +\infty$ it holds for all $\lambda \in [0, 1-\epsilon ] $:
\begin{itemize}
\item[(a)]  $$\delta \leq \frac{\binom{k}{ \lfloor \lambda k \rfloor }\binom{p'-k}{ \lfloor k(1-\lambda) \rfloor }  }{n_{\mathrm{pos} }(1-\lambda)\ln 2  }\leq 1-\delta,$$ and
 \item[(b)]  $$\delta (1-2^{\lambda-1}) \leq F_{p'}(\lambda) \leq (1-\delta)(1-2^{\lambda-1}).$$
 \end{itemize}
\end{lemma}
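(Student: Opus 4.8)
\textbf{Proof proposal for Lemma~\ref{eta_thing}.}

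The plan is to reduce both claims to explicit asymptotic estimates of the quantity $A(\lambda) := \ln\left[\binom{k}{\lfloor \lambda k\rfloor}\binom{p'-k}{\lfloor(1-\lambda)k\rfloor}\right] / n_{\mathrm{pos}}$ appearing on the right-hand side of~\eqref{func_prop}, uniformly over $\lambda \in [0,1-\epsilon]$. For part~(a), I would first use Lemma~\ref{pi_prime_estimation} and Lemma~\ref{number_positive} to get, a.a.s., $p' = p^{1+o(1)}\cdot (k/p)^{1/(2R)+o(1)}$ and $n_{\mathrm{pos}} = (1+o(1))n/2 = (1+o(1)) \tfrac{\log_2\binom{p}{k}}{2R}$. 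Then, using $\binom{k}{\lfloor\lambda k\rfloor} = e^{O(k)}$ (which is $e^{o(\log p \cdot k)}$ since $k = o(p)$ forces $\log(p/k) \to \infty$, hence $k = o(n_{\mathrm{pos}})$), the dominant term is $\ln\binom{p'-k}{\lfloor(1-\lambda)k\rfloor} = (1-\lambda)k \ln\frac{p'}{k} (1+o(1))$. Dividing by $n_{\mathrm{pos}}$ and computing $\ln(p'/k)/ \ln(p/k) \to 1 - \tfrac{1}{2R}\cdot\tfrac{\text{(stuff)}}{\ldots}$ — more precisely, $\ln(p'/k) = (1 - \tfrac{1}{2R})\ln(p/k)(1+o(1))$ from Lemma~\ref{pi_prime_estimation} combined with $k=p^{o(1)}$-type control — one gets $A(\lambda) = (1-\lambda)(2R-1)\ln 2 \cdot (1+o(1))$, uniformly in $\lambda$. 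Since $\tfrac{1}{2}<R<1$, the constant $(2R-1)\ln 2 \in (0,\ln 2)$ is bounded strictly between $0$ and $1$, and one then checks that $A(\lambda)/((1-\lambda)\ln 2)$ lies in a compact subinterval of $(0,1)$; this yields $\delta$. The uniformity in $\lambda\in[0,1-\epsilon]$ is what makes $1-\lambda \ge \epsilon$ bounded away from zero, so the division by $(1-\lambda)$ is harmless.

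For part~(b), I would use part~(a) together with the defining property of $F_{p'}$, namely $\alpha(F_{p'}(\lambda), 1-2^{\lambda-1}) = A(\lambda)$ with the selection rule $F_{p'}(\lambda) \le 1 - 2^{\lambda-1}$. Write $y = y(\lambda) := 1-2^{\lambda-1}$, which for $\lambda\in[0,1-\epsilon]$ lies in a compact subinterval of $(0,1/2]$ (bounded below by $1-2^{-\epsilon}>0$ and above by $1/2$). The map $x \mapsto \alpha(x,y)$ is continuous, strictly decreasing on $(0,y]$, equals $0$ at $x=y$, and tends to $\ln\frac{1}{1-y} = (1-\lambda)\ln 2$ as $x\to 0^+$. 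By part~(a), $A(\lambda)$ is a fraction $c(\lambda)\in[\delta,1-\delta]$ of this maximal value $(1-\lambda)\ln 2$. Hence $F_{p'}(\lambda)$ is the unique $x\in(0,y)$ with $\alpha(x,y)/\big((1-\lambda)\ln 2\big) = c(\lambda)$, and a compactness/continuity argument (the normalized function $x/y \mapsto \alpha(x,y)/\alpha(0,y)$ on $[0,1]$ is continuous, strictly decreasing from $1$ to $0$, uniformly over $y$ in the relevant compact set) shows that $c(\lambda)$ bounded away from $0$ and $1$ forces $F_{p'}(\lambda)/y$ bounded away from $0$ and $1$. Possibly after shrinking $\delta$, this gives $\delta\, y(\lambda) \le F_{p'}(\lambda) \le (1-\delta)y(\lambda)$, which is exactly claim~(b).

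The main obstacle I anticipate is making the estimate $A(\lambda) = (1-\lambda)(2R-1)\ln 2 + o(1)$ genuinely \emph{uniform} in $\lambda$ and tracking that the $o(1)$ error from Lemmas~\ref{pi_prime_estimation} and~\ref{number_positive} (which are stated with an arbitrary fixed $\eta$) does not swamp the gap to $0$ and $1$ — this is why one fixes $\eta$ sufficiently small depending on $\epsilon$ and $R$ before extracting $\delta$. A secondary technical point is the floor functions $\lfloor\lambda k\rfloor$ and $\lfloor(1-\lambda)k\rfloor$: near $\lambda$ close to $1$ the binomial $\binom{p'-k}{\lfloor(1-\lambda)k\rfloor}$ could have a vanishingly small exponent, but the restriction $\lambda\le 1-\epsilon$ keeps $\lfloor(1-\lambda)k\rfloor \ge \epsilon k/2 \to \infty$, so Stirling applies cleanly. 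Everything else is routine: Stirling's approximation for the binomials, elementary monotonicity of $\alpha(\cdot,y)$ from Lemma~\ref{partial_derivatives}, and the compactness of the parameter ranges.
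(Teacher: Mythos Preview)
Your proposal is correct and follows essentially the same route as the paper's proof: for part~(a) you (like the paper) reduce to the asymptotic $A(\lambda)=(2R-1)(1-\lambda)\ln 2+O(\eta)$ via Lemmas~\ref{number_positive} and~\ref{pi_prime_estimation} and the fact that $\binom{k}{\lfloor\lambda k\rfloor}=e^{O(k)}$ is lower order, then use $2R-1\in(0,1)$ with $\eta$ chosen small; for part~(b) you (again like the paper) introduce the normalized quantity $\alpha(x,y)/\ln\frac{1}{1-y}$, observe it is continuous and strictly decreasing in $x$ from $1$ to $0$ on $(0,y]$, and use compactness of the $y$-range to convert the bounds from part~(a) into bounds on $F_{p'}(\lambda)/y$. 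The one cosmetic imprecision is that $\alpha(ty,y)/\ln\frac{1}{1-y}$ is not literally a function of $t=x/y$ alone, but your hedge ``uniformly over $y$ in the relevant compact set'' is exactly the right repair, and it is what the paper does as well.
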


\begin{lemma}\label{lem:upper_bound} 
There exists a sufficiently small constant $\eta' > 0$ such that for all $\lambda \in [0,1-\epsilon]$  it holds
\begin{align*}
F_{p'}(\lambda) \leq  (1-2^{\lambda-1})\left(1-(1+\eta')\frac{\binom{k}{ \lfloor \lambda k \rfloor }\binom{p'-k}{ \lfloor k(1-\lambda) \rfloor } }{n_{\mathrm{pos} } (1-\lambda)\ln 2  }\right),
\end{align*}a.a.s. as $p \rightarrow +\infty.$
\end{lemma}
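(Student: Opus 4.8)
The plan is to compare $F_{p'}(\lambda)$ against an explicit target point and exploit the strict convexity of a one–variable section of the relative–entropy function $\alpha$, carrying everything out on the a.a.s.\ event supplied by Lemma~\ref{eta_thing}(a). Fix $\lambda\in[0,1-\epsilon]$ and abbreviate $y:=1-2^{\lambda-1}$ and $M:=-\ln(1-y)=(1-\lambda)\ln 2$; as $\lambda$ ranges over $[0,1-\epsilon]$ the pair $(y,M)$ stays in the compact set $[1-2^{-\epsilon},\tfrac{1}{2}]\times[\epsilon\ln 2,\ln 2]$, bounded away from all degenerate values. Set $g:=\alpha\big(F_{p'}(\lambda),y\big)/M$; by the defining relation \eqref{func_prop} of the first moment function this $g$ is exactly the fraction appearing on the right-hand side of the claimed inequality, so the statement to be proven is $F_{p'}(\lambda)\le y\big(1-(1+\eta')g\big)$. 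Moreover Lemma~\ref{eta_thing}(a) gives $g\in[\delta,1-\delta]$ a.a.s.\ for some $\delta=\delta(\epsilon)>0$, and Lemma~\ref{eta_thing}(b) gives $F_{p'}(\lambda)\in(0,y)$.

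First I would reduce to a one–variable statement. By Lemma~\ref{partial_derivatives}, $\partial_x\alpha(x,y)=\ln\tfrac{x}{1-x}-\ln\tfrac{y}{1-y}<0$ for $x<y$, so $x\mapsto\alpha(x,y)$ is strictly decreasing on $[0,y]$; hence, with $x_0:=y\big(1-(1+\eta')g\big)$, it suffices to prove $\alpha(x_0,y)\le gM=\alpha\big(F_{p'}(\lambda),y\big)$, which forces $x_0\ge F_{p'}(\lambda)$. Writing $\psi_y(a):=\alpha\big(y(1-a),y\big)$ for $a\in[0,1]$, so that $\alpha(x_0,y)=\psi_y\big((1+\eta')g\big)$, a direct differentiation yields $\psi_y(0)=0$, $\psi_y(1)=\alpha(0,y)=M$, $\psi_y'(a)=y\ln\tfrac{1-y(1-a)}{(1-y)(1-a)}\ge 0$ (so $\psi_y$ is nondecreasing), $\psi_y'(0)=0$, and $\psi_y''(a)=y\big(\tfrac{y}{1-y(1-a)}+\tfrac{1}{1-a}\big)>0$, so $\psi_y$ is strictly convex. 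Strict convexity together with the endpoint values $\psi_y(0)=0,\ \psi_y(1)=M$ already gives $\psi_y(a)\le aM$ on $[0,1]$, with strict inequality on $(0,1)$ — this is precisely the lemma with $\eta'=0$.

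To upgrade to a genuinely positive $\eta'$ I would quantify the slack in $\psi_y(g)<gM$ that is available because $g$ is bounded away from $1$. By monotonicity of $\psi_y'$, $\psi_y\big((1+\eta')g\big)\le\psi_y(g)+\eta' g\,\psi_y'\big((1+\eta')g\big)$, so it suffices to arrange $\eta'\,\psi_y'\big((1+\eta')g\big)\le\big(gM-\psi_y(g)\big)/g=M-\psi_y(g)/g$. Two estimates do this: (i) for $\eta'\le\tfrac{\delta}{2(1-\delta)}$ and $g\le 1-\delta$ one has $(1+\eta')g\le 1-\tfrac{\delta}{2}$, a region in which $\psi_y'$ is bounded above by a finite constant $\beta_2=\beta_2(\epsilon)$ uniformly over $y\in[1-2^{-\epsilon},\tfrac12]$ (crucially we never approach $a=1$, where $\psi_y'$ blows up — this is where the bound $g\le 1-\delta$ from Lemma~\ref{eta_thing}(a) is used); and (ii) the function $(g,y)\mapsto M-\psi_y(g)/g$ is continuous and, by the strict inequality $\psi_y(g)<gM$ on $(0,1)$, strictly positive on the compact set $[\delta,1-\delta]\times[1-2^{-\epsilon},\tfrac12]$, hence bounded below by some $\beta_1=\beta_1(\epsilon)>0$. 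Taking $\eta':=\min\{\beta_1/\beta_2,\ \delta/(2(1-\delta))\}$, a constant depending only on $\epsilon$, then yields $\psi_y\big((1+\eta')g\big)\le gM$ simultaneously for all $\lambda\in[0,1-\epsilon]$ on the a.a.s.\ event of Lemma~\ref{eta_thing}, and the strict monotonicity of $\alpha(\cdot,y)$ delivers the claim.

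I expect the only delicate point to be the combination of uniformity over $\lambda$ with the singular behaviour of $\psi_y'$ as $a\to 1$: the exclusion $\lambda\le 1-\epsilon$ keeps $(y,M)$ in a benign compact range, and the bound $g\le 1-\delta$ from Lemma~\ref{eta_thing}(a) is exactly what keeps $(1+\eta')g$ away from the singularity, making the supremum defining $\beta_2$ finite. Everything else — the derivative computations for $\psi_y$ and the compactness/continuity argument producing $\beta_1,\beta_2$ — is routine one–variable calculus.
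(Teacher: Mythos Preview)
Your proof is correct and takes a genuinely different route from the paper. Both arguments reduce the claim to the strict inequality $\alpha\big(y(1-u),y\big)<u\,(-\ln(1-y))$ on a compact range of $(u,y)$ and then upgrade to a uniformly positive $\eta'$, but they establish that core inequality in complementary ways. The paper freezes $u$ and varies $y$: it defines $G(x)=\alpha(x(1-u),x)-u\ln\tfrac{1}{1-x}$, proves $G'(x)<0$ by a direct computation, and reduces to evaluating at the endpoint $x=\tfrac{1}{2}$, where the inequality becomes the elementary concavity fact $h\big(\tfrac{1-u}{2}\big)>1-u$ about the binary entropy. You instead freeze $y$ and vary $u$: your $\psi_y(a)=\alpha\big(y(1-a),y\big)$ is strictly convex in $a$ with $\psi_y(0)=0$ and $\psi_y(1)=-\ln(1-y)$, so the chord inequality $\psi_y(a)<a\,(-\ln(1-y))$ is immediate. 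For the upgrade, the paper invokes a single abstract compactness step (strict inequality on a compact set), whereas you quantify the slack explicitly via the bounds $\beta_1,\beta_2$ and the integral estimate $\psi_y\big((1+\eta')g\big)-\psi_y(g)\le \eta' g\,\psi_y'\big((1+\eta')g\big)$; your reliance on $g\le 1-\delta$ from Lemma~\ref{eta_thing}(a) to keep $\psi_y'$ bounded is exactly the right use of that hypothesis. Your approach is shorter and more conceptual (it hinges directly on the convexity of the KL divergence in its first argument), while the paper's detour through $G(x)$ recovers a pleasant entropy identity at the endpoint but is more laborious.
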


 All the constants (included for example in the asymptotic notations) in this proof may depend on the value of $\epsilon.$

Let us now fix some $\ell \in \{ 0,1,2,\ldots,\lfloor (1-\epsilon)k \rfloor \} $. Using \eqref{func_prop} and elementary algebra with binomial coefficients we obtain that almost surely:
\begin{eqnarray}
    \alpha \left(F_{p'}\left(\frac{\ell+1}{k}\right),1-2^{\frac{\ell+1}{k}-1}\right)-\alpha \left(F_{p'} \left(\frac{\ell}{k}\right) ,1-2^{\frac{\ell}{k}-1}\right) & =&  \frac{ \ln \left( {k \choose  \ell +1 } { {p'-k \choose  k - \ell-1   }    }  \right)  -    \ln \left( {k \choose  \ell  } { {p'-k \choose  k - \ell  }    }  \right)   }{  n_{\mathrm{pos} }  } \nonumber  \\
    							 &= & \frac{\ln \left[\frac{(k-\ell)^2}{(\ell+1)(p'-2k+\ell)}\right]}{n_{\mathrm{pos} }}. \label{first_manip}
\end{eqnarray}

Let $\delta$ be the constant promised by  Lemma~\ref{eta_thing} and let us define the compact convex set  \begin{align*}
 \mathbb{T} = \{(x,y) \in [0,1]^2: \delta y \leq x \leq (1-\delta)y, y \leq 1-2^{-\epsilon}\},
 \end{align*}for which we have for all $\lambda \in [0,1-\epsilon],$ $(F_{p
 '}(\lambda),1-2^{\lambda-1}) \in \mathbb{T},$ a.a.s. as $p \rightarrow +\infty.$ Combining~\eqref{first_manip}  with an  application of the two dimensional mean value theorem on  $\mathbb{T}$ (e.g. by restricting $\alpha$ on the line segment connecting $(\frac{\ell}{k},1-2^{\frac{\ell}{k}-1})$ and $(\frac{\ell+1}{k},1-2^{\frac{\ell+1}{k}-1})$)
we conclude that there exists $(\xi_1,\xi_2) \in \mathbb{T}$ with $\xi_1 \in \left(F_{p'}(\frac{\ell}{k}), F_{p'}(\frac{\ell+1}{k}) \right)$ and $\xi_2 \in [1-2^{\frac{\ell+1}{k}-1},1-2^{\frac{\ell}{k}-1}]$  such that

\begin{align*} 
  \bigg{\langle} \nabla \alpha \left(  \xi_1,\xi_2 \right)  ,\left(F_{p'} \left(\frac{\ell+1}{k}\right),1-2^{\frac{\ell+1}{k}-1}\right)-\left(F_{p'}\left(\frac{\ell}{k} \right) ,1-2^{\frac{\ell}{k}-1} \right)\bigg{\rangle}= \frac{\ln \left[\frac{(k-\ell)^2}{(\ell+1)(p'-2k+\ell )}\right]}{n_{\mathrm{pos}}}.
\end{align*}
or equivalently
\begin{align}\label{sheshe} 
   \frac{ \partial }{ \partial \xi_1} \alpha(\xi_1,\xi_2) \left(F_{p'} \left(\frac{\ell+1}{k}\right)-F_{p'}\left(\frac{\ell}{k} \right)\right) - \frac{ \partial }{ \partial \xi_2} \alpha(\xi_1,\xi_2) \left(2^{\frac{\ell+1}{k}-1}-2^{\frac{\ell}{k}-1} \right)= \frac{\ln \left[\frac{(k-\ell)^2}{(\ell+1)(p'-2k+\ell )}\right]}{n_{\mathrm{pos}}}.
\end{align}

Now from Proposition \ref{prop_fm}, $F_{p'}$ is continuously differentiable in $[0,1-\epsilon].$ Therefore, for constants possibly dependent on $\epsilon,$ we have that it necessarily holds $$\xi_1=F_{p'}\left(\frac{\ell}{k}\right)+O\left(\frac{1}{k}\right).$$ Furthermore since $\ell \leq (1-\epsilon)k$ we also have uniformly over all such $\ell,$ $$\Omega(1) \leq \xi_2=1-2^{\frac{\ell}{k}-1}+O(\frac{1}{k}).$$ Hence, applying Lemma~\ref{partial_derivatives} we get:
\begin{align}\label{blip}
    \frac{ \partial }{ \partial \xi_2} \alpha(\xi_1,\xi_2)=-\frac{\xi_1}{\xi_2}+\frac{1-\xi_1}{1-\xi_2}=\frac{1}{2^{\frac{\ell}{k}-1}}-\frac{F_{p'}(\frac{\ell}{k})}{2^{\frac{\ell}{k}-1}(1-2^{\frac{\ell}{k}-1}) }+O\left(\frac{1}{k}\right).
\end{align}
For similar reasons for constants possibly dependent only on $\epsilon>0,$
\begin{align}\label{bloup}
    2^{\frac{\ell+1}{k}-1}-2^{\frac{\ell}{k}-1}=2^{\frac{\ell}{k}-1}\frac{\ln 2}{k}+O\left(\frac{1}{k^2}\right).
\end{align} 
Combining~\eqref{blip},~\eqref{bloup} allows us to conclude that
\begin{align}\label{part_y}
    \frac{ \partial }{ \xi_2} \alpha(\xi_1,\xi_2) \left(2^{\frac{\ell+1}{k}-1}-2^{\frac{\ell}{k}-1} \right)=\frac{\ln 2}{k}\left(1-\frac{F_{p'}(\frac{\ell}{k})}{1-2^{\frac{\ell}{k}-1}}\right)+O\left(\frac{1}{k^2} \right)
\end{align}

Combing now \eqref{sheshe} and \eqref{part_y}  we obtain:
\begin{align}\label{partial}
   \frac{ \partial }{ \partial \xi_1} \alpha(\xi_1,\xi_2) \left(F_{p'}\left(\frac{\ell+1}{k}\right)-F_{p'}\left(\frac{\ell}{k}\right)\right)= \frac{\ln \left[\frac{(k-\ell)^2}{(\ell+1)(p'-2k+\ell+1)}\right]}{n_{\mathrm{pos} }}+\frac{\ln 2}{k}\left(1-\frac{F_{p'}(\frac{\ell}{k})}{1-2^{\frac{\ell}{k}-1}}\right)+O \left(\frac{1}{k^2} \right).
\end{align}
Now, using Lemma \ref{lem:tayl} and the definition of $\mathbb{T}$ we get that for the constant $c_0=\ln 2>0$,
\begin{align}
    \frac{ \partial }{ \partial \xi_1} \alpha(\xi_1,\xi_2) \leq -c_0,
\end{align}which therefore using \eqref{partial} shows that it suffices to prove that for some  constant $D_0 = D_0(\epsilon)$ it holds
\begin{align*}
  \frac{\ln \left[\frac{(k-\ell)^2}{(\ell+1)(p'-2k+\ell)}\right]}{n_{\mathrm{pos} }}+\frac{\ln 2}{k}\left(1-\frac{F_p(\frac{\ell}{k})}{1-2^{\frac{\ell}{k}-1}}\right)\ge D_0 \frac{\ln  \frac{p'-k}{k}   }{n_{\mathrm{pos}}}+\omega \left(\frac{1}{k^2} \right).
\end{align*}Now since we have $\frac{1}{2}<R<1$ it holds  $\frac{\ln  \frac{p'-k}{k}   }{n_{\mathrm{pos}}}=\Omega(\frac{1}{k})=\omega(\frac{1}{k^2})$ a.a.s. as $p \rightarrow +\infty$ and therefore it suffices to show that for some  constant $D_0 = D_0(\epsilon)$ it holds
\begin{align}\label{goal_RHS}
  \frac{\ln \left[\frac{(k-\ell)^2}{(\ell+1)(p'-2k+\ell)}\right]}{n_{\mathrm{pos} }}+\frac{\ln 2}{k}\left(1-\frac{F_p(\frac{\ell}{k})}{1-2^{\frac{\ell}{k}-1}}\right)\ge D_0 \frac{\ln  \frac{p'-k}{k}   }{n_{\mathrm{pos}}}.
\end{align}

Now using Lemma \ref{lem:upper_bound} and simple rearrangement of the terms, we obtain for some constant $\eta'>0,$
\begin{align}\label{calma}
\frac{\ln 2}{k}\left(1-\frac{F_{p'}(\frac{\ell}{k})}{1-2^{\frac{\ell}{k}-1}}\right) \geq (1+\eta')\frac{\ln \binom{k}{ \ell}\binom{p'-k}{k-\ell}}{(k-\ell)n_{\mathrm{pos} }} \geq (1+\eta')\frac{\ln  \left( \frac{k(p-k)}{(k-\ell)^2}  \right)}{n_{\mathrm{pos} }},
\end{align}
where for the second inequality we used that   $ \binom{k}{ \ell}\binom{p-k}{k-\ell} \geq \left( \frac{k(p-k)}{(k-\ell)^2} \right)^{k-\ell}$, which in turn follows by the well-known inequality  ${a \choose b  } \ge \left( \frac{a}{ b} \right)^{b} $, which is true  
for any pair of integers $a \ge b$.

Hence, by elementary algebra it holds,
\begin{eqnarray}
    \frac{\ln \left[\frac{(k-\ell)^2}{(\ell+1)(p'-2k+\ell+1)}\right]}{n_{\mathrm{pos} }}+\frac{\ln 2}{k}\left(1-\frac{F_p(\frac{\ell}{k})}{1-2^{\frac{\ell}{k}-1}}\right) & \ge &  \frac{\ln \left[\frac{(k-\ell)^2}{(\ell+1)(p'-2k+\ell )}\right]}{n_{\mathrm{pos}} }+(1+\eta')\frac{\ln \frac{k(p'-k)}{(k-\ell)^2}}{n_{\mathrm{pos}}}+O\left(\frac{1}{k^2} \right)    \nonumber \\
   & = &   \frac{\ln \left[\frac{(k-\ell)^2}{(\ell+1)(p'-2k+\ell)}\right]}{n_{\mathrm{pos}}}+\frac{\ln \frac{k(p'-k)}{(k-\ell)^2}}{n_{\mathrm{pos}}} \nonumber \\
    & &+\eta'\frac{\ln \frac{k(p'-k)}{(k-\ell)^2}}{n_{\mathrm{pos}}} +O\left(\frac{1}{k^2} \right)   \nonumber \\
   & = &\frac{\ln  \left[\frac{k(p'-k)}{(\ell+1)(p'-2k+\ell)} \right]}{n_{\mathrm{pos}}}+\eta'\frac{\ln \frac{k(p'-k)}{(k-\ell)^2}}{n_{\mathrm{pos}}}+O \left(\frac{1}{k^2} \right)    \nonumber   \\
  & \ge& \frac{\ln \frac{k}{\ell+1}}{n_{\mathrm{pos}}}+\eta'\frac{\ln  \frac{p'-k}{k} }{n_{\mathrm{pos}}}+O \left(\frac{1}{k^2} \right)  \nonumber \\
   & \ge& D_0 \frac{\ln  \frac{p'-k}{k}   }{n_{\mathrm{pos}}}  \nonumber,
\end{eqnarray}
where $D_0$ is an appropriately small positive constant,  since $k=o(p)$, a.a.s. as $p \rightarrow +\infty$ $n_{\mathrm{pos}}=\Theta(k \ln \frac{p}{k})$ and $\ell \leq (1-\epsilon)k$. This completes the proof of the theorem.

\subsection{Proof of Theorem~\ref{thm:OGP}}

Notice that if for any $\zeta_{p'}, W_{p'} \in \{0,1,\ldots,k\}, H_{p'}>0$ with $ \zeta_{p'} \leq (1-\epsilon)k,$ $\zeta_{p'}+W_{p'} \leq k,$  $W_{p'}=\omega(1)$ and $H_{p'}>0$ the $(\zeta_{p'},W_{p'},H_{p'})$-OGP holds then we can apply Lemma \ref{lem:monot} for $\zeta=\zeta_{p'}, W=W_{p'}, H=H_{p'}$ and finally $M=\lfloor (1-\epsilon)k \rfloor$ (by redefining $W_{p'}$ by $\min\{W_{p'},(1-\epsilon)k\}$ if necessary) to deduce that for some $u=0,1,\ldots,W_{p'}-1$, the function $\phi(\ell W_{p'}+u)$ is not  non-increasing as a function of $\ell =0,1,\ldots, \lfloor (1-\epsilon)k/W_{p'} \rfloor.$

Hence it suffices to show that for any $\epsilon>0$ a.a.s. as $p\rightarrow +\infty$  any $W_{p'}=\omega(1)$ and any $u<W_{p'}$ the function  $\phi(\ell W_{p'}+u)$ is decreasing as a function of $\ell =0,1,\ldots, \lfloor (1-\epsilon)k/W_{p'} \rfloor.$ Notice that now to prove this, it actually suffices to prove that for any $\epsilon>0$ a.a.s. as $p\rightarrow +\infty$  there exists a constant $Q(\epsilon)>0$ such that for any $\ell_2<\ell_1<(1-\epsilon)k$ with $\ell_1>\ell_2+Q(\epsilon)$ it holds $\phi(\ell_2)>\phi(\ell_1)$.

Towards that goal, let us fix some $Q=Q(\epsilon)$ that we will take sufficiently large for our needs. Notice that from Conjecture \ref{conj} for some constant $C=C(\epsilon)$, a.a.s. it holds for any such $\ell_2,\ell_1$  
\begin{align}\label{step_1}
    \phi(\ell_2) -\phi(\ell_1) \geq n_{\mathrm{pos}}\left(F\left(\frac{\ell_2}{k}\right)-F\left(\frac{\ell_1}{k}\right) \right)-2C \ln k.
\end{align}Now using Theorem \ref{monotonicity_theorem} we also have for some constant $D=D(\epsilon)>0$ a.a.s. for any such $\ell_2,\ell_1$ by telescopic summation,
\begin{align}\label{step_2}
    F\left(\frac{\ell_2}{k}\right)-F\left(\frac{\ell_1}{k}\right) \geq (\ell_1-\ell_2)D \frac{\ln \left( \frac{p'-k}{k} \right)  }{n_{\mathrm{pos} }} \geq Q(\epsilon) D \frac{\ln \left( \frac{p'-k}{k} \right)  }{n_{\mathrm{pos} }}
\end{align} Combining \eqref{step_1} and \eqref{step_2} we conclude that a.a.s. for any such $\ell_2,\ell_1$
\begin{align}\label{step_3}
    \phi(\ell_2) -\phi(\ell_1) \geq Q(\epsilon) D \ln \left( \frac{p'-k}{k} \right) -2C \ln k.
\end{align}
Using that we condition on the a.a.s. event that for some sufficiently small $\eta<1-\frac{1}{2R}$ Lemma \ref{pi_prime_estimation}  hold we have a.a.s. $\ln \left( \frac{p'-k}{k} \right) \geq (1-\frac{1}{2R}-\eta) \ln \frac{p}{k}.$ Hence we conclude that a.a.s. for any such $\ell_2,\ell_1$, 
\begin{align}\label{step_4}
    \phi(\ell_2) -\phi(\ell_1) \geq Q(\epsilon) D \left(1-\frac{1}{2R}-\eta\right) \ln \left( \frac{p}{k} \right) -2C \ln k.
\end{align} 
Using now that $k\leq p^{1-c}$ for some $c>0$ we have that $\ln \left( \frac{p}{k} \right)=\Omega(\ln k).$ Hence, indeed we can choose a constant $Q(\epsilon)>0$ so that as $p \rightarrow +\infty$ it holds
\begin{align}\label{step_5}
Q(\epsilon) D \left(1-\frac{1}{2R}-\eta\right) \ln \left( \frac{p}{k} \right) >2C \ln k.
\end{align} Combining \eqref{step_4} and \eqref{step_5} completes the proof.

\section{Proof of Theorem~\ref{local_search_theorem}}\label{local_search_proof}

In this section we prove Theorem~\ref{local_search_theorem}.  We will  assume without loss of generality  that $R \ge  \frac{ \nu \mathrm{e}^{-\nu} }{ \ln 2 } $. (Indeed, if $R <  \frac{ \nu \mathrm{e}^{-\nu} }{ \ln 2 }$, then we can  utilize  only the first  $ \lfloor\log_2 { p \choose k}/ (\frac{ \nu \mathrm{e}^{-\nu} }{ \ln 2 }) \rfloor$ tests and ignore the rest.) 

We start the analysis by estimating the number of possibly defective items which are actually non-defective by proving Lemma~\ref{whatsleft}. Its proof can be found in Appendix~\ref{omitted_local}.

\begin{lemma}\label{whatsleft}
Let $q $ denote the number of possibly defective items which are actually non-defective. For every constant $\eta \in (0,1)$:
\begin{align}\label{piprime}
q  \le (1+\eta) p  \left( \frac{k}{p} \right)^{  \frac{ (1-\eta)  \nu \cdot \mathrm{exp}\left(-\nu (1 +  \frac{ \nu}{2k } ) \right) }  {R \ln2}   },
\end{align}
asymptotically almost surely.
\end{lemma}


We call $q$ the number of possible defective items that are actually non-defective and we condition on its value satisfying the condition of Lemma~\ref{whatsleft} in what follows.

We proceed now towards proving Theorem~\ref{local_search_theorem}. For every $\ell \in \{0, \ldots, k-1\}$ let $\Omega_{k'}^{\ell} \subseteq  \Omega_{k'}$ denote  the set of potentially defective $k'$-tuples that  contain exactly $\ell$ defective items. Fix now an integer $\ell \in \{0, \ldots, k-1\}$ and, for each  $\sigma \in \Omega_{k'}^{\ell}$, let $\mathrm{B}_{\sigma} $ denote the bad event  that there exists no state $\tau_{ij} \in \mathcal{T}_{\sigma} $ such that $P(\tau_{ij}) > P(\sigma)$ and  $i,j$ are a non-defective  and a defective item, respectively. 
(Recall that for all possible pairs $(i,j) \in \sigma \times ( \mathrm{PD} \setminus \sigma  ) $, we define $\tau_{i,j} = ( \sigma \cup \{j \} ) \setminus \{i \} $.)
Recalling Definition~\ref{bad_local_minima}, to show our result it suffices to prove that 
\begin{align}\label{goal}
\lim_{ p \rightarrow +\infty } \sum_{\ell=0}^{ \lfloor (1-\delta) k\rfloor-1} \mathrm{Pr} \left[   \bigcup_{ \sigma  \in \Omega_{k'}^{\ell} } B_{\sigma}  \right] = 0.
\end{align} 
%
%
%
%
%
%
%

We first upper bound $ \max_{\sigma \in \Omega_{k'}^{\ell}}  \Pr[ B_{\sigma}] $ since, by a union bound and the elementary observation that $|\Omega_{k'}^{\ell}|=\binom{k}{\ell}  \binom{q }{k'-\ell}$, in order to show~\eqref{goal} it suffices to show that
\begin{align}\label{simpler_goal} 
\lim_{p \rightarrow +\infty}  \sum_{\ell=0}^{\lfloor (1-\delta) k'\rfloor-1}\binom{k'}{\ell}  \binom{q }{k'-\ell} \max_{\sigma \in \Omega_{k'}^{\ell}}  \Pr[ B_{\sigma} ] = 0.
 \end{align}
   To that end, fix an $\ell \in \{0,1,\ldots, \lfloor (1-\delta) k\rfloor-1\}$ and an arbitrary state $\sigma \in \Omega_{k'}^{\ell}$.  Let also $\mathcal{T}_{\sigma}^* \subseteq \mathcal{T}_{\sigma}$ denote the set of states $\tau_{ij}$ such that $i$ is a non-defective item in $\sigma$, a set of items we denote by $\mathrm{ND}_{\sigma}$, and $j$ is a defective item which does not belong in $\sigma$, a set of items we denote by $\mathrm{D}_{\setminus \sigma}$.  For each $\tau_{ij} \in \mathcal{T}_{\sigma}^*$ let $\Delta (\tau_{ij} ) = P(\tau_{ij} ) - P(\sigma)$ denote the random variable that equals the difference between the number of positive tests explained by $\tau_{ij}$ and the  number of positive tests explained by $\sigma$. Let $\Theta_{j}$ denote the number of positive tests that contain $j$ and do not contain any item from $\sigma$. Let also $\Theta_{i \overline{j}}$ denote the number of positive tests in which (i) $i$ is the only item in $\sigma$ that participates in the test and; (ii) $j$ does not participate in the test. Using our notation, it is an elementary observation that
\begin{align}\label{identity_Theta}
\Delta( \tau_{ij}  )= \Theta_{j} - \Theta_{i \overline{j} }.
\end{align}
To see this, at first observe that any positive test that contains at least two items from $\sigma$ is explained both by the set of items in $\sigma$ and by the set of items of every  state in $\mathcal{T}_{\sigma}$. This is because our algorithm removes (and, therefore, also  inserts to) at most one item  from $\sigma$. Thus, such tests do not contribute to $\Delta(\tau_{ij})$. Therefore it suffices to focus on the contribution of tests with either one or zero items from $\sigma$. Now observe that the only tests that contribute positively to $\Delta(\tau_{ij})$ (and increase its value by one) among the ones that contain at most one item from $\sigma$, are exactly the ones that contain $j$ and no item of $\sigma$. Analogously, the only tests that contribute negatively to $\Delta(\tau_{ij})$ (by decreasing its value by one) among the ones that contain at most one item from $\sigma$ are the ones in which $i$ is the only item that is contained in $\sigma$, and $j$ is not contained in $\sigma$. The identity~\eqref{identity_Theta} follows.

Using~\eqref{identity_Theta} we now bound $ \Pr[B_{\sigma} ]$ as follows.
\begin{align}
 \Pr[B_{\sigma} ]  & \le \Pr\left[  \bigcap_{ \tau_{ij} \in  \mathcal{T}_{\sigma}^*  } \{ \Delta ( \tau_{ij}  )  \le 0 \} \right]  \nonumber \\ & \le  \Pr\left[  \sum_{ \tau_{ij} \in  \mathcal{T}_{\sigma}^*  } \Delta ( \tau_{ij}  )  \le 0 \right]  \nonumber\\
 &=  \Pr \left[  \sum_{ i \in \mathrm{ND}_{\sigma}, j \in \mathrm{D}_{\setminus \sigma}  }  	( \Theta_j -  \Theta_{i \overline{j} } )   	\le 0	\right]. \label{Bsigmabound} 
\end{align}

We now prove the following equality in distribution. The proof of Lemma~\ref{dist_lemma} can be found in Appendix~\ref{omitted_local}. 
\begin{lemma}\label{dist_lemma} 
For every $\ell=0,1,2,\ldots,k-1$ and $\sigma \in \Omega_{k'}^{\ell}$: 
\begin{align*}
 \sum_{ i \in \mathrm{ND}_{\sigma}, j \in \mathrm{D}_{\setminus \sigma}  }  	( \Theta_j -  \Theta_{i \overline{j} } )   	\overset{d}{=}     \sum_{q = 1}^{ n} C_q^{(\ell) } ,
 \end{align*}
where $C_q^{( \ell) } $ are independently and identically distributed random variables whose distribution  samples with probability  $(1-\frac{\nu}{k})^{ k'} $  from a (scaled\footnote{Here we mean that the result of the binomial is multiplied by a fixed number, namely, $(k'-\ell)$ in this case.}) binomial random variable $(k'-\ell) \mathrm{Bin}(k-\ell, \frac{\nu}{k}  )    $, with probability $\nu \left(\frac{k'}{k}- \frac{\ell}{k} \right) \left( 1- \frac{\nu}{k} \right)^{k'-1}   \left(1 - \left(1 - \frac{\nu}{k} \right)^{k-\ell} \right)$ samples from a (scaled) binomial $-1 \cdot \mathrm{Bin}(k-\ell, 1 - \frac{\nu}{k} )$ and it is $0$, otherwise.
 \end{lemma}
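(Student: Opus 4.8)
The plan is to expand the double sum $\sum_{i\in\mathrm{ND}_\sigma,\,j\in\mathrm{D}_{\setminus\sigma}}(\Theta_j-\Theta_{i\overline{j}})$ into a sum over the $n$ tests, identify the contribution of each individual test as one of the i.i.d.\ summands $C_q^{(\ell)}$, and then read off its law from a short case analysis on how the test's set of participating items meets $\sigma$. Write $X_q$ for the (random) subset of items included in test $q$, and recall that test $q$ is positive exactly when $X_q$ contains a defective. Split $\sigma$ into its defective part $\sigma_D$ (of size $\ell$) and its non-defective part $\mathrm{ND}_\sigma$ (of size $k'-\ell$). Applying Fubini to $\Theta_j=\sum_q\mathbf{1}[q\text{ positive}]\,\mathbf{1}[j\in X_q]\,\mathbf{1}[\sigma\cap X_q=\emptyset]$ and to the analogous expression for $\Theta_{i\overline{j}}$, the whole sum equals $\sum_{q=1}^n C_q^{(\ell)}$, where $C_q^{(\ell)}$ collects the contribution of test $q$ and is a function of the single row $X_q$ (given the fixed $\sigma$ and the fixed defective set). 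Since the rows $X_1,\dots,X_n$ are i.i.d.\ (each item present independently with probability $\nu/k$, and positivity of a row is a function of that row), the variables $C_q^{(\ell)}$ are i.i.d., so it remains only to compute the law of a single $C_q^{(\ell)}$.

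For that law I would condition on $\sigma\cap X_q$. If $\sigma\cap X_q=\emptyset$, which has probability $(1-\nu/k)^{k'}$, then the $\Theta_{i\overline{j}}$-part vanishes (there is no unique $\sigma$-participant) and the $\Theta_j$-part equals $(k'-\ell)\,|X_q\cap\mathrm{D}_{\setminus\sigma}|$: the summand $\mathbf{1}[j\in X_q]$ does not depend on $i$, producing the factor $k'-\ell=|\mathrm{ND}_\sigma|$, and the positivity indicator is redundant here, since on this event the test is positive iff $X_q$ meets $\mathrm{D}_{\setminus\sigma}$, in which case the count is already nonzero. As $\mathrm{D}_{\setminus\sigma}$ is disjoint from $\sigma$, $|X_q\cap\mathrm{D}_{\setminus\sigma}|\sim\mathrm{Bin}(k-\ell,\nu/k)$ independently of the event, yielding the first branch $(k'-\ell)\mathrm{Bin}(k-\ell,\nu/k)$. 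If $|\sigma\cap X_q|=1$ with its unique element in $\mathrm{ND}_\sigma$ --- an event of probability $(k'-\ell)\frac{\nu}{k}(1-\frac{\nu}{k})^{k'-1}$ --- then the $\Theta_j$-part vanishes, the $\Theta_{i\overline{j}}$-part equals $(k-\ell)-|X_q\cap\mathrm{D}_{\setminus\sigma}|$, and the positivity indicator forces $X_q\cap\mathrm{D}_{\setminus\sigma}\neq\emptyset$, an independent event of probability $1-(1-\nu/k)^{k-\ell}$; conditionally on all of this, $(k-\ell)-|X_q\cap\mathrm{D}_{\setminus\sigma}|$ follows the law of $\mathrm{Bin}(k-\ell,1-\nu/k)$ (conditioned to avoid its extreme value $k-\ell$, which is precisely what the positivity of the test imposes). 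Multiplying the two probabilities produces the weight $\nu(\frac{k'}{k}-\frac{\ell}{k})(1-\frac{\nu}{k})^{k'-1}(1-(1-\frac{\nu}{k})^{k-\ell})$ attached to the branch $-\mathrm{Bin}(k-\ell,1-\nu/k)$. In every remaining configuration --- the unique element of $\sigma\cap X_q$ is defective, or $|\sigma\cap X_q|\ge 2$, or the test is not positive --- both parts vanish and $C_q^{(\ell)}=0$; these events, together with the zero sub-event of the second case, make up the residual probability. This is exactly the stated mixture.

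The only genuinely delicate step is the second case: one must carry the positivity requirement that is built into the definition of $\Theta_{i\overline{j}}$ all the way through the computation, since it is what simultaneously produces the factor $1-(1-\nu/k)^{k-\ell}$ in the weight and pins down the binomial there as the number of $\mathrm{D}_{\setminus\sigma}$-items missing from a row that is known to contain at least one of them; everything else is bookkeeping (expanding $\Theta_j,\Theta_{i\overline{j}}$, swapping sums, and invoking independence of the rows). A minor preliminary remark should also record that the conditioning performed earlier in the section --- on $q$ and on the size estimates of Lemmas~\ref{number_positive},~\ref{pi_prime_estimation},~\ref{whatsleft} --- is compatible with treating the $n$ rows as i.i.d.\ product-Bernoulli vectors in the computation above.
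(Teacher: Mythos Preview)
Your proposal is correct and follows essentially the same route as the paper: decompose the double sum as a sum over the $n$ tests, observe the per-test contributions are i.i.d.\ (as functions of the independent rows $X_q$), and compute the law of a single $C_q^{(\ell)}$ by a case analysis on how $X_q$ meets $\sigma$. Your handling of the second case is in fact slightly more careful than the paper's---you correctly note that, conditional on the unique $\sigma$-participant being non-defective \emph{and} the test being positive, the variable $(k-\ell)-|X_q\cap\mathrm{D}_{\setminus\sigma}|$ is a $\mathrm{Bin}(k-\ell,1-\nu/k)$ conditioned to avoid $k-\ell$; the paper states the unconditional binomial and absorbs this discrepancy silently (it is harmless for the subsequent stochastic-domination step in Corollary~\ref{cor:prob_bound}). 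Your closing remark about compatibility of the earlier conditioning on $q$ with the i.i.d.\ row structure flags a point the paper does not address explicitly.
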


A direct corollary of Lemma~\ref{dist_lemma} is the following. Its proof can also be found in in Appendix~\ref{omitted_local}.
\begin{corollary}\label{cor:prob_bound}
For every $\ell=0,1,2,\ldots,k-1$ and $\sigma \in \Omega_{k}^{\ell}$:
\begin{align*}
  \Pr \left[  \sum_{ i \in \mathrm{ND}_{\sigma}, j \in \mathrm{D}_{\setminus \sigma}  }   	( \Theta_j -  \Theta_{i \overline{j} } )   	\le 0	\right]  \le    \Pr \left[  \sum_{q = 1}^{ n} T^{(\ell)}_q   	\le 0	\right],
 \end{align*}
where $T^{(\ell)}_q$ are independently and identically distributed random variables whose distribution  samples with probability  $(1-\frac{\nu}{k})^{ k'} $  from a (scaled) binomial random variable   $ \frac{k'-\ell}{k-\ell }   \mathrm{Bin}(k-\ell, \frac{\nu}{k}  )    $, equals to $-1$ with probability $\nu \left(\frac{k'}{k}- \frac{\ell}{k} \right) \left( 1- \frac{\nu}{k} \right)^{k'-1}   \left(1 - \left(1 - \frac{\nu}{k} \right)^{k-\ell} \right)$, and it is $0$, otherwise.
\end{corollary}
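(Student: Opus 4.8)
The plan is to derive Corollary~\ref{cor:prob_bound} directly from Lemma~\ref{dist_lemma} by a pointwise coupling, with essentially no further computation. By Lemma~\ref{dist_lemma}, the probability on the left-hand side equals $\Pr\left[\sum_{q=1}^{n} C_q^{(\ell)} \le 0\right]$, where the $C_q^{(\ell)}$ are i.i.d.\ with the three-branch distribution described there. First I would observe that $C_q^{(\ell)}$ and $T_q^{(\ell)}$ have exactly the same branching probabilities: with probability $(1-\tfrac{\nu}{k})^{k'}$ they take a ``positive-type'' value, with probability $\nu(\tfrac{k'}{k}-\tfrac{\ell}{k})(1-\tfrac{\nu}{k})^{k'-1}(1-(1-\tfrac{\nu}{k})^{k-\ell})$ they take a ``negative-type'' value, and otherwise they equal $0$. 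This allows me to couple $C_q^{(\ell)}$ and $T_q^{(\ell)}$ so that, for every $q$, both random variables land in the same branch.

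Second, inside each branch I would check that $C_q^{(\ell)} \ge (k-\ell)\,T_q^{(\ell)}$ holds pointwise under this coupling. On the positive branch, coupling the two scaled binomials to use the identical underlying draw $X\sim\mathrm{Bin}(k-\ell,\tfrac{\nu}{k})$ gives $C_q^{(\ell)}=(k'-\ell)X=(k-\ell)\cdot\tfrac{k'-\ell}{k-\ell}X=(k-\ell)\,T_q^{(\ell)}$, i.e.\ equality (here $\ell\le k-1$, so $k-\ell\ge 1$ and the division is legitimate). On the zero branch both sides vanish. On the negative branch $T_q^{(\ell)}=-1$, whereas $C_q^{(\ell)}=-\mathrm{Bin}(k-\ell,1-\tfrac{\nu}{k})\ge -(k-\ell)$ since a $\mathrm{Bin}(k-\ell,\cdot)$ is at most $k-\ell$; hence $C_q^{(\ell)}\ge -(k-\ell)=(k-\ell)\,T_q^{(\ell)}$. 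Summing over $q$ under the coupling yields $\sum_{q=1}^{n} C_q^{(\ell)} \ge (k-\ell)\sum_{q=1}^{n} T_q^{(\ell)}$, and since $k-\ell>0$ the event $\{\sum_q C_q^{(\ell)}\le 0\}$ is contained in $\{\sum_q T_q^{(\ell)}\le 0\}$, which gives the claimed inequality.

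The argument has no genuinely hard step; the only point needing care is the negative branch, where one deliberately discards the magnitude of the ``$-\mathrm{Bin}$'' term and replaces it by the crude bound $-(k-\ell)$ — this is precisely the slack that produces the easier comparison variable $T_q^{(\ell)}$ (a scaled binomial shifted by the integer $-1$). Equivalently, the step can be phrased without an explicit coupling: conditioning on which of the $C_q^{(\ell)}$ fall in each branch, the negative-type contributions sum to a quantity of absolute value at most $(k-\ell)$ times the number of negative branches, so $\{\sum_q C_q^{(\ell)}\le 0\}$ is implied by the corresponding event for the $T_q^{(\ell)}$ after dividing through by $k-\ell$.
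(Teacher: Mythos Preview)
Your proof is correct and follows essentially the same approach as the paper's: both replace the negative-branch value $-\mathrm{Bin}(k-\ell,1-\tfrac{\nu}{k})$ by its crude lower bound $-(k-\ell)$ and then divide through by $k-\ell$. The paper inserts an intermediate variable $D_q^{(\ell)}$ (equal to $(k-\ell)T_q^{(\ell)}$) before scaling, whereas you couple $C_q^{(\ell)}$ to $(k-\ell)T_q^{(\ell)}$ directly, but this is purely a presentational difference. One minor slip: in your last sentence you write that $\{\sum_q C_q^{(\ell)}\le 0\}$ ``is implied by'' the event for $T_q^{(\ell)}$, which reverses the direction; the correct containment (which you state properly in the preceding paragraph) is that $\{\sum_q C_q^{(\ell)}\le 0\}\subseteq\{\sum_q T_q^{(\ell)}\le 0\}$.
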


Overall, combining~\eqref{Bsigmabound} with Corollary~\ref{cor:prob_bound}, and  using the standard strategy for proving large deviation bounds, we obtain:
\begin{align}\label{B_bound}
\Pr[ B_{\sigma} ] \le \Pr \left[ \sum_{q =1}^{n} T^{(\ell)}_q \le 0 \right]  \le \min_{ \lambda \geq 0 } \prod_{q=1}^n \mathbb{E} \left[ \mathrm{e}^{ -  \lambda  T^{(\ell)}_i}  \right]=\min_{ \lambda \geq 0 }  \mathbb{E} \left[ \mathrm{e}^{ -  \lambda  T^{(\ell)}_1}  \right]^n.
\end{align}

Now~\eqref{B_bound} implies that  in order to prove~\eqref{simpler_goal} it suffices to find $\lambda \geq 0$ such that  
\begin{align*}
\lim_{p \rightarrow +\infty } \sum_{\ell=0}^{ \lfloor  (1-\delta )k  \rfloor-1} \exp  \left(  \ln\binom{k}{\ell} \binom{q}{k'-\ell}+n \ln\mathbb{E}\left[ \mathrm{e}^{-\lambda T^{(\ell)}_1} \right] \right) = 0,
\end{align*}or it suffices to have  
\begin{align*}
\lim_{p \rightarrow +\infty } k \exp  \left( \max_{\ell=0,1,\ldots,   \lfloor (1-\delta) k \rfloor -1} \left\{ \ln\binom{k}{\ell} \binom{q}{k'-\ell}+n \ln\mathbb{E}\left[ \mathrm{e}^{-\lambda T^{(\ell)}_1} \right] \right\} \right) = 0,
\end{align*}
or, equivalently, by taking logarithms 
\begin{align}\label{goal2}
\lim_{p \rightarrow +\infty } \left[\ln k+ \max_{\ell=0,1,\ldots,  \lfloor (1-\delta)k \rfloor -1} \left\{ \ln\binom{k}{\ell} \binom{q}{k'-\ell}+n \ln\mathbb{E}\left[ \mathrm{e}^{-\lambda T^{(\ell)}_1} \right] \right\} \right]= -\infty.
\end{align}
It is a straightforward observation using  the elementary $\binom{m_1}{m_2} \leq (m_1 e/m_2)^{m_2}$ and  Lemma~\ref{whatsleft} with a parameter $\eta > 0$ to be defined later, that it holds for some constants $C_0, C_1>0,
$ 
\begin{eqnarray*}
\ln\binom{k}{\ell} \binom{q}{k'-\ell}  &\leq & (k'-\ell) \ln \frac{q}{(1+\epsilon)k- (1-\delta)k}+C_0 k \\
						& \leq & \left(1- (1-\eta) \frac{\nu \mathrm{e}^{-\nu(1 +  \frac{\nu}{2k} )  }}{ R\ln2}\right) \left(1+\epsilon-\frac{\ell}{k} \right) k \ln\frac{p}{k}+C_1 k,
\end{eqnarray*}
where in deriving the second inequality we used the fact that $\epsilon \delta > 0$, i.e., at least one of $\epsilon, \delta$ is positive.
Using the last inequality and the fact that $ n \leq  \frac{ \log_2 { p \choose k }  }{R } $, to show~\eqref{goal2} it suffices to show that
\begin{align*}
 \lim_{p \rightarrow +\infty } \left[C_1k+ F_{1}(\nu,k,\eta, R, \delta, \epsilon)  + \ln k\right] = -\infty,
\end{align*}
where,
\begin{align*}
F_{1}(\nu,k,\eta, R, \delta, \epsilon)  = \max_{\ell=0,\ldots,\lfloor (1-\delta)k\rfloor-1} \left\{ \left(1- (1-\eta) \frac{\nu \mathrm{e}^{-\nu(1 +  \frac{\nu}{2k} )  }}{ R\ln2}\right) \left(1+\epsilon-\frac{\ell}{k} \right) k \ln\frac{p}{k}+  \frac{ \log_2 {p \choose k }  }{R }     \ln\mathbb{E}\left[ \mathrm{e}^{-\lambda T^{(\ell)}_1} \right]  \right\}.
\end{align*}

Now using first $\ln \binom{p}{k} \leq  k \ln ( p/k)  +k   $ and second that since $T^{(\ell)}_1 \geq -1$ almost surely, it holds $\ln\mathbb{E}\left[ \mathrm{e}^{-\lambda T^{(\ell)}_1} \right]  \leq \ln\mathbb{E}\left[ \mathrm{e}^{\lambda } \right] =\lambda$, it suffices to show that
\begin{align*}
 \lim_{p \rightarrow +\infty } \left[ \left(C_1+\frac{ \lambda}{R \ln 2}\right)k+ \ln k + F_2(\nu,k,\eta,R, \epsilon)  \right] = -\infty,
\end{align*}
where,
\begin{align*}
F_2 (\nu,k,\eta,R,\delta, \epsilon) = \max_{\ell=0,\ldots,\lfloor(1-\delta)k \rfloor-1 } \left\{  \left(1- (1-\eta) \frac{\nu \mathrm{e}^{-\nu(1 +  \frac{\nu}{2k} )  }}{ R\ln2}\right) \left(1+\epsilon-\frac{\ell}{k} \right) + \frac{1}{R \ln 2 }  \ln\mathbb{E}\left[ \mathrm{e}^{-\lambda T^{(\ell)}_1} \right]  \right\} k \ln\frac{p}{k}.
\end{align*}

Now, since $k=o(p)$ and also $\lambda \geq 0, R>0$ are fixed constants,     it suffices to show that
$$\limsup_{p \rightarrow +\infty} \max_{\ell=0,\ldots, \lfloor(1-\delta)k \rfloor -1} \left\{  \left(1- (1-\eta) \frac{\nu \mathrm{e}^{-\nu(1 +  \frac{\nu}{2k} )  }}{ R\ln2}\right) \left(1+\epsilon-\frac{\ell}{k} \right) + \frac{1}{R \ln 2 }  \ln\mathbb{E}\left[ \mathrm{e}^{-\lambda T^{(\ell)}_1} \right]  \right\} k \ln\frac{p}{k}<0,$$
 or 
\begin{align*}
R&< \liminf_{p \rightarrow +\infty}  \min_{\ell=0,1,\ldots,  \lfloor(1-\delta)k \rfloor  -1}  \frac{ (1-\eta)  \nu \mathrm{e}^{-\nu (1 +  \frac{\nu}{2k} )} (1+\epsilon-\frac{\ell}{k}) -     \ln\mathbb{E}\left[ \mathrm{e}^{-\lambda T^{(\ell)}_1   } \right]         }{ (1+ \epsilon -\frac{\ell}{k}) \ln2 },
\end{align*} 
which is equivalent to 
\begin{align}\label{final_cond0}
R&< (1-\eta)\frac{   \nu \mathrm{e}^{-\nu (1 +  \frac{\nu}{2k} )} }{\ln2}-\limsup_{p \rightarrow +\infty} \max_{\zeta \in [0,1-\delta]} \frac{\ln\mathbb{E}\left[ \mathrm{e}^{-\lambda T^{(\lfloor \zeta k \rfloor)}_1} \right]  }{ (1+\epsilon-\zeta) \ln2 }.
\end{align}
Now since it suffices that \eqref{final_cond0} holds for some $\eta>0$ our condition by continuity is equivalent with 
\begin{align}\label{final_cond}
R&<  \frac{   \nu \mathrm{e}^{-\nu (1 +  \frac{\nu}{2k} )} }{\ln2}-\limsup_{p \rightarrow +\infty} \max_{\zeta \in [0,1-\delta)} \frac{\ln\mathbb{E}\left[ \mathrm{e}^{-\lambda T^{(\lfloor \zeta k \rfloor)}_1} \right]  }{ (1+\epsilon-\zeta) \ln2 }.
\end{align}

We now prove the following technical lemma in Appendix~\ref{omitted_local}.

\begin{lemma}\label{SwapLimMax} 
Under the assumptions of Theorem~\ref{local_search_theorem}:
$$ \limsup_{p \rightarrow +\infty} \max_{\zeta \in [0,1-\delta)} \frac{\ln\mathbb{E}\left[ \mathrm{e}^{-\lambda T^{(\lfloor \zeta k \rfloor)}_1} \right]  }{ (1+\epsilon-\zeta)\ln2 } =   \max_{\zeta \in [0,1- \delta) }      \limsup_{p \rightarrow +\infty}  \frac{\ln\mathbb{E}\left[ \mathrm{e}^{-\lambda T^{(\lfloor \zeta k \rfloor)}_1} \right]  }{ (1+\epsilon-\zeta) \ln2  }.$$
\end{lemma}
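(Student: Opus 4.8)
The statement is an interchange of $\limsup_{p\to\infty}$ with a supremum over the interval $[0,1-\delta)$, for the functions
\[
f_p(\zeta):=\frac{\ln\mathbb E\big[\mathrm e^{-\lambda T^{(\lfloor\zeta k\rfloor)}_1}\big]}{(1+\epsilon-\zeta)\ln 2},\qquad \zeta\in[0,1-\delta].
\]
One inequality is automatic: for each fixed $\zeta$ and every $p$ we have $\max_{\zeta'\in[0,1-\delta)}f_p(\zeta')\ge f_p(\zeta)$, so taking $\limsup_p$ and then $\sup_\zeta$ gives $\mathrm{LHS}\ge\mathrm{RHS}$. It therefore suffices to prove $\mathrm{LHS}\le\mathrm{RHS}$, and the plan is to do this by showing that $f_p\to f$ \emph{uniformly} on the closed interval $[0,1-\delta]$ for a \emph{continuous} limit $f$; the interchange is then a soft consequence.

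\textbf{Step 1 (closed form and pointwise limit).} Using the explicit description of the law of $T^{(\ell)}_1$ from Corollary~\ref{cor:prob_bound}, $\mathbb E[\mathrm e^{-\lambda T^{(\ell)}_1}]$ is a sum of three terms: the moment generating function of the scaled binomial $\tfrac{k'-\ell}{k-\ell}\mathrm{Bin}(k-\ell,\nu/k)$, i.e. $(1-\tfrac\nu k+\tfrac\nu k\mathrm e^{-\lambda(k'-\ell)/(k-\ell)})^{k-\ell}$, weighted by $(1-\tfrac\nu k)^{k'}$; the weight of the atom at $-1$ times $\mathrm e^{\lambda}$; and the residual weight of the atom at $0$. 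Letting $k\to\infty$ with $\ell=\lfloor\zeta k\rfloor$ (so $\ell/k\to\zeta$, $k-\ell\sim(1-\zeta)k$, $k'\sim(1+\epsilon)k$, $(k'-\ell)/(k-\ell)\to(1+\epsilon-\zeta)/(1-\zeta)$), each group of factors converges through standard $(1+x/k)^k\to\mathrm e^x$ asymptotics, and a direct check shows that the sum converges to exactly the argument of the logarithm in \eqref{formula}; hence $f_p(\zeta)\to f(\zeta):=-Q(\lambda,\zeta,\nu,\epsilon)$ pointwise, with $f$ continuous on $[0,1-\delta]$. Moreover the limiting expression $g(\zeta)$ of $\mathbb E[\mathrm e^{-\lambda T^{(\lfloor\zeta k\rfloor)}_1}]$ is bounded below by $1-\mathrm e^{-\nu(1+\epsilon)}>0$ on $[0,1-\delta]$: the first summand is $\ge-\mathrm e^{-\nu(1+\epsilon)}$ because $\lambda\ge0$ forces the inner exponent to be $\le0$, and the middle summand is $\ge0$.

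\textbf{Step 2 (uniformity and conclusion).} This is where the hypotheses are used. Every elementary factor above is of the form $(1-\nu/k)^m$ with $m\in\{k',k'-1,k-\ell\}$ an integer that is an affine function of $\zeta k$ up to an $O(1)$ error; such powers are approximated by their exponential limits with error $O(m/k^2)=O(1/k)$ \emph{uniformly} in $\zeta$, and likewise $\tfrac{k'-\ell}{k-\ell}=\tfrac{1+\epsilon-\zeta}{1-\zeta}+O(1/k)$ uniformly — crucially, the denominator $k-\ell\ge\delta k-O(1)$ stays bounded away from $0$ because $\zeta\le 1-\delta$. Combining these with the uniform continuity of the smooth maps composing them yields $\mathbb E[\mathrm e^{-\lambda T^{(\lfloor\zeta k\rfloor)}_1}]\to g(\zeta)$ uniformly on $[0,1-\delta]$. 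Since $g$ and all the $\mathbb E[\mathrm e^{-\lambda T^{(\lfloor\zeta k\rfloor)}_1}]$ eventually lie in a fixed compact subset of $(0,\infty)$ (Step 1) and $1+\epsilon-\zeta\ge\delta+\epsilon>0$ on $[0,1-\delta]$ (using $\delta\epsilon>0$), composing with $\ln$ and dividing by $(1+\epsilon-\zeta)\ln 2$ preserves uniform convergence, so $f_p\to f$ uniformly on $[0,1-\delta]$. Consequently $\big|\sup_{[0,1-\delta)}f_p-\sup_{[0,1-\delta)}f\big|\le\sup_{[0,1-\delta]}|f_p-f|\to0$, so $\limsup_p\sup_{[0,1-\delta)}f_p$ is a genuine limit equal to $\sup_{[0,1-\delta)}f$; and since $\lim_p f_p(\zeta)=f(\zeta)$ exists (hence equals $\limsup_p f_p(\zeta)$), also $\sup_{\zeta\in[0,1-\delta)}\limsup_p f_p(\zeta)=\sup_{[0,1-\delta)}f$. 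This gives $\mathrm{LHS}=\mathrm{RHS}$.

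\textbf{Main obstacle.} The only real work is Step 2 — upgrading the obvious pointwise convergence to uniform convergence — and the delicate point there is keeping the quantities $1-\zeta$ and $1+\epsilon-\zeta$ (appearing as denominators in the limiting moment generating function and in the normalization, respectively) bounded away from $0$ uniformly in the index; this is exactly where $\zeta\le1-\delta$ and $\delta\epsilon>0$ enter, everything else being bookkeeping. An essentially equivalent route avoids naming the limit: show $\{f_p\}$ is asymptotically equi-Lipschitz on $[0,1-\delta]$ (concretely $|\mathbb E[\mathrm e^{-\lambda T^{(\ell+1)}_1}]-\mathbb E[\mathrm e^{-\lambda T^{(\ell)}_1}]|=O(1/k)$ uniformly), pick near-maximizers $\zeta_p$, pass to a subsequence with $\zeta_{p_m}\to\zeta_*\in[0,1-\delta]$, and conclude $\mathrm{LHS}=\lim_m f_{p_m}(\zeta_{p_m})=f(\zeta_*)\le\sup_{[0,1-\delta)}f=\mathrm{RHS}$, the last equality holding because $f$ is continuous and $[0,1-\delta)$ is dense in $[0,1-\delta]$.
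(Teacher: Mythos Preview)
Your proof is correct and follows essentially the same route as the paper: reduce the interchange to uniform convergence of $\zeta\mapsto\mathbb E[\mathrm e^{-\lambda T_1^{(\lfloor\zeta k\rfloor)}}]$ on $[0,1-\delta]$, compute the explicit moment generating function and its pointwise limit, verify the limit is bounded below by $1-\mathrm e^{-\nu(1+\epsilon)}>0$, and then check that all the $(1+x/k)^{ck}$--type factors converge with $O(1/k)$ error uniformly in $\zeta$ because $1-\zeta\ge\delta$ and $1+\epsilon-\zeta\ge\epsilon+\delta>0$. The only cosmetic differences are that the paper passes through the inequality $\ln x\le x-1$ to reduce to the ratio tending to $1$ (whereas you invoke uniform continuity of $\ln$ on a compact subset of $(0,\infty)$), and you additionally spell out the soft ``one inequality is automatic'' direction and an alternative equicontinuity/subsequence argument.
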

Combining Lemma~\ref{SwapLimMax} with~\eqref{final_cond}, it suffices to have 
\begin{align}\label{final_cond2}
R&<  \frac{    \nu \mathrm{e}^{-\nu } }{\ln2}-\max_{\zeta \in [0,1-\delta)}\limsup_{p \rightarrow +\infty}  \frac{\ln\mathbb{E}\left[ \mathrm{e}^{-\lambda T^{(\lfloor \zeta k \rfloor)}_1} \right]  }{ (1+\epsilon-\zeta)\ln2 }.
\end{align}

By direct computation for every $\zeta \in [0,1-\delta)$ it holds
\begin{align}
\mathbb{E}\left[ \mathrm{e}^{-\lambda T^{(\lfloor \zeta k \rfloor)}_1} \right]&=\left(1-\frac{\nu}{k} \right)^{k'}\left( \left(\mathrm{e}^{-\mu}\frac{\nu}{k}+1-\frac{\nu}{k}\right)^{k-\lfloor \zeta k \rfloor}-1\right) \nonumber \\
												&+\left(\frac{k'}{k}-\frac{\lfloor \zeta k \rfloor}{k}\right)\nu\left(1-\frac{\nu}{k}\right)^{k'-1}\left( 1-\left(1-\frac{\nu}{k}\right)^{k-\lfloor \zeta k \rfloor}\right)(\mathrm{e}^{\lambda}-1)+1,
\end{align}
where $\mu = \mu( \lambda, \epsilon, \zeta,k) = \lambda  \frac{  \lfloor (1+\epsilon)k  \rfloor -   \lfloor \zeta k \rfloor   }{k - \lfloor \zeta k \rfloor }$. Since $k \rightarrow +\infty$ as $p \rightarrow +\infty$ 
we conclude by using elementary calculus that
\begin{align}
  \lim_{p \rightarrow +\infty}   \mathbb{E}\left[ \mathrm{e}^{-\lambda T^{(\lfloor \zeta k \rfloor)}_1} \right]& = \mathrm{e}^{-\nu (1+\epsilon) }(\mathrm{e}^{\nu(1-\zeta)  (\mathrm{e}^{- \frac{\lambda (1 + \epsilon -\zeta)}{1 -\zeta }  }-1) }-1)  \nonumber \\ 
  &+  (1+\epsilon-\zeta)\nu \mathrm{e}^{-\nu (1+\epsilon) }(1-\mathrm{e}^{-\nu(1-\zeta)})(\mathrm{e}^{\lambda}-1)  +1.\label{gen_function}
\end{align}
Combining \eqref{final_cond2} and \eqref{gen_function} it suffices to have 
\begin{align}\label{final_cond3}
R&< \frac{   \nu \mathrm{e}^{-\nu } }{\ln2}+  \min_{\zeta \in [0,1-\delta)} Q( \lambda, \zeta,  \nu,  \epsilon ).
\end{align}
where recall that
\begin{align*}
Q( \lambda, \zeta, \nu, \epsilon ) = - \frac{\ln\left(\mathrm{e}^{-\nu(1+\epsilon) }(\mathrm{e}^{\nu(1-\zeta)  (\mathrm{e}^{-\frac{\lambda (1 + \epsilon -\zeta)}{1 -\zeta }}-1) }-1)  +  (1+ \epsilon-\zeta)\nu \mathrm{e}^{-\nu(1+\epsilon)  }(1-\mathrm{e}^{-\nu(1-\zeta)})(\mathrm{e}^{\lambda}-1)  +1 \right)  }{ (1+\epsilon-\zeta) \ln2}.
\end{align*}

As it suffices that this condition is satisfied by some $\lambda \geq 0$, by optimizing \eqref{final_cond3} over $\lambda \geq 0$ it suffices to have
 \begin{align*}
R&< \frac{   \nu \mathrm{e}^{-\nu } }{\ln2}+\max_{\lambda \geq 0} \min_{\zeta \in [0,1-\delta)} Q(\lambda, \zeta,  \nu,  \epsilon),
\end{align*}
concluding the proof.

\section{Proof of Lemma~\ref{key_approx_lemma}}\label{key_approx_lemma_proof}


Let us consider some fixed $\epsilon'>0$ which will be chosen sufficiently small given the needs of the proof. 

For $\ell=0,1,\ldots,\lfloor (1-\delta) k \rfloor$ let $Z_{\epsilon,\epsilon',\ell}$ be the counting random variable of the number of sets of size $k$ which satisfy at least $(1-\epsilon')n $ tests and contain exactly $\ell$ defective items (with overlap $\ell$). By two union bounds, first over $\ell$ and then over the different sets of overlap $\ell$ and by fixing for each $\ell,$ an arbitrary set $\theta_{\ell}$ with overlap $\ell$ we have
\begin{align*}
\Pr\left[\bigcup_{\ell=0}^{\lfloor (1-\delta) k \rfloor} \{ Z_{\epsilon,\epsilon',\ell} \geq 1\}\right] & \leq \sum_{\ell=0}^{\lfloor (1-\delta) k \rfloor }\Pr\left[ Z_{\epsilon,\epsilon',\ell}\geq 1\right]\\
&\leq \sum_{\ell=0}^{\lfloor (1-\delta) k \rfloor }\binom{k}{\ell} \binom{p-k}{k-\ell} \Pr\left[\{ \theta_{\ell} \text{ satisfies }   (1-\epsilon') n  \text{ tests} \}\right],
\end{align*}where we used that the exchangeability of the events  $\{ \theta_{\ell} \text{ satisfies }   (1-\epsilon') n  \text{ tests} \}$ for the different possible choice of $\theta_{\ell}.$ Now using also a union bound over the possible subset of tests of cardinality  $\lceil (1-\epsilon') n \rceil$ that $\theta_{\ell}$ satisfies, we have for some fixed subset of tests $S$ of cardinality $\lceil (1-\epsilon') n \rceil$ that
\begin{align*}
\Pr\left[\bigcup_{\ell=0}^{\lfloor (1-\delta) k \rfloor} \{ Z_{\epsilon,\epsilon',\ell} \geq 1\}\right] 
&\leq \binom{n}{\lceil (1-\epsilon') n \rceil} \sum_{\ell=0}^{\lfloor (1-\delta) k \rfloor }\binom{k}{\ell} \binom{p-k}{k-\ell} \Pr\left[\{ \theta_{\ell} \text{ satisfies the tests in }S \}\right]\\
&\leq \binom{n}{\lceil (1-\epsilon') n \rceil} \sum_{\ell=0}^{\lfloor (1-\delta) k \rfloor }\binom{k}{\ell} \binom{p-k}{k-\ell} \Pr\left[\{ \theta_{\ell} \text{ satisfies a fixed test} \}\right]^{\lceil (1-\epsilon') n \rceil},
\end{align*}where we have used the exchangeability of the events $\{ \theta_{\ell} \text{ satisfies the tests in }S \}$ for the different $S$ of a fixed size, and the independence of the tests. Now it is well-known computation in the group testing literature (see e.g. \cite[Section II.A]{Truongallornothing}) that for each $\ell$, $$\Pr\left[\{ \theta_{\ell} \text{ satisfies a fixed test} \}\right]=2^{\frac{\ell}{k}-1}.$$Hence it holds 
\begin{align*}
\Pr\left[\bigcup_{\ell=0}^{\lfloor (1-\delta) k \rfloor} \{ Z_{\epsilon,\epsilon',\ell} \geq 1\}\right] 
&\leq \binom{n}{\lceil (1-\epsilon') n \rceil} \sum_{\ell=0}^{\lfloor (1-\delta) k \rfloor }\binom{k}{\ell} \binom{p-k}{k-\ell} 2^{-\lceil (1-\epsilon') n  \rceil \frac{k-\ell}{k} },
\end{align*}which now using standard asymptotics $\binom{n}{\alpha n} \leq e^{ C\alpha \log \frac{1}{\alpha} n}$ for $\alpha>0$ and some universal constant $C>0$ and $\binom{k}{\ell} \binom{p-k}{k-\ell} \leq e^{(k-\ell) \ln \frac{p}{k}+O(k)}$ allows us to conclude for some constant $C'>C>0$
\begin{align*}
\Pr\left[\bigcup_{\ell=0}^{\lfloor (1-\delta) k \rfloor} \{ Z_{\epsilon,\epsilon',\ell} \geq 1\}\right] 
&\leq e^{C\epsilon' \log (\frac{1}{\epsilon'}) n} \sum_{\ell=0}^{\lfloor (1-\delta) k \rfloor } e^{(k-\ell) \ln \frac{p}{k}+O(k)} 2^{- (1-\epsilon') n  \frac{k-\ell}{k} }\\
&\leq \sum_{\ell=0}^{\lfloor (1-\delta) k \rfloor } \exp \left(C \epsilon' \log (\frac{1}{\epsilon'}) k \ln \frac{p}{k}+(k-\ell) \ln \frac{p}{k}+O(k) -(1-\epsilon') (1+\epsilon)(k-\ell)\ln \frac{p}{k} \right) \\
&\leq \sum_{\ell=0}^{\lfloor (1-\delta) k \rfloor }  \exp \left( (1-(1-\epsilon') (1+\epsilon))(k-\ell)\ln \frac{p}{k} +C \epsilon' \log (\frac{1}{\epsilon'}) k \ln \frac{p}{k}+O(k))\right)\\
&\leq k \exp \left( (1-(1-\epsilon') (1+\epsilon))\delta k\ln \frac{p}{k} +C\epsilon' \log (\frac{1}{\epsilon'}) k \ln \frac{p}{k}+O(k) \right)\\
&=  \exp \left( (1-(1-\epsilon') (1+\epsilon))\delta k\ln \frac{p}{k} +C\epsilon' \log (\frac{1}{\epsilon'}) k \ln \frac{p}{k}+O(k+\log k) \right)\\
&=  \exp \left( ( (1-(1-\epsilon') (1+\epsilon))\delta+C'\epsilon'\log (\frac{1}{\epsilon'})) k\ln \frac{p}{k} \right),
\end{align*}where for the asymptotics we used that $k=o(p).$ We now choose $\epsilon'>0$ sufficently small such that  $(1-(1-\epsilon') (1+\epsilon))\delta+C'\epsilon' \log (\frac{1}{\epsilon'})<-\frac{\epsilon \delta}{2}<0$ to get  
\begin{align*}
\Pr\left[\bigcup_{\ell=0}^{\lfloor (1-\delta) k \rfloor} \{ Z_{\epsilon,\epsilon',\ell} \geq 1\}\right] 
&\leq  \exp \left( -\frac{\epsilon \delta}{2} k\ln \frac{p}{k} \right),
\end{align*}which clearly tends to zero as $p$ grows to infinity and $k=o(p).$ Hence $\bigcap_{\ell=0}^{\lfloor (1-\delta) k \rfloor} \{ Z_{\delta,\epsilon,\epsilon',\ell} =0\} $ happens a.a.s. as $p \rightarrow +\infty,$ which is exactly what we want.

\section{OGP related proofs Omitted from Sections~\ref{formal_statement}  and~\ref{OGP_proof}  }\label{omitted_formal}

In this section we present the proofs of Lemmas~\ref{number_positive},~\ref{pi_prime_estimation},~\ref{lem:monot},~\ref{lem:tayl},~\ref{eta_thing},~\ref{lem:upper_bound} and Proposition~\ref{prop_fm}.

\subsection{Proof of Lemma~\ref{number_positive}} 

Observe that each test is positive with probability $1-(1- \frac{\nu}{k})^{k}$ independently of all other tests. Hence, by linearity of expectation we have  $\ex[ | \mathcal{P} | ] = n (1- (1- \frac{\nu}{k})^k ) \xrightarrow{p \rightarrow +\infty} n/2 $. Now standard Chernoff estimates imply that for any constant $\eta' \in (0,1)$
$$\Pr[   | | \mathcal{P} | - \ex[  |\mathcal{P} |  ]    |  > \eta' \ex[ | \mathcal{P} | ]   ] \le \mathrm{exp}(- (\eta')^2  \ex[ | \mathcal{P} |] /3 )   \xrightarrow{ p \rightarrow +\infty} 0.$$
Combining these two facts concludes the proof.

\subsection{Proof of Lemma~\ref{pi_prime_estimation}}
Let $p'= | \mathrm{PD} | $. We  only show the lower bound on $p'$. The  proof of the upper bound is similar, and in fact almost identical to the one of the more refined Lemma~\ref{whatsleft}.

Recall that $k = \lfloor p^{\alpha} \rfloor, \alpha \in (0,1)$, and let $ \delta = \min\{\alpha, \eta \}/10 $. Let   $n_{\mathrm{neg}} = n - n_{\mathrm{pos} } $  denote the number of negative tests and notice that  Lemma~\ref{number_positive} implies that $n_{\mathrm{neg} } \le (1 +\delta) n/2 $.  We condition on that  $n_{\mathrm{neg} }  = N_{\mathrm{neg} }$, where $ N_{\mathrm{neg} } \le (1+\delta) n/2   $ is a fixed value. Since the probability that a fixed non-defective item appears in a particular test is $\nu/k$, we see that:
\begin{eqnarray}
\mathbb{E}[p'-k \mid n_{\mathrm{neg} }  = N_{\mathrm{neg} } ] &= & (p-k) \left( 1 - \frac{\nu}{k} \right)^{ N_{ \mathrm{neg}} }      \nonumber  \\
											    & \ge & (p-k) \mathrm{exp} \left(  - \frac{  N_{\mathrm{neg } } }{  \frac{k}{\nu } -1  }    \right)  \label{first_pi_bound} \\
											    & =&  (p-k)  \mathrm{exp}\left( - \frac{ (1+\delta)  \ln   {p \choose k }   }{    \left(  \frac{k}{\nu } -1 \right)  R \ln 2   }  		\right)  \nonumber \\
											     &  \ge & 	(p-k)  \mathrm{exp}\left( - \frac{ (1+\delta)  ( k \ln \frac{p}{k} + k )     ) }{   2 \left(  \frac{k}{\nu } -1 \right)  R \ln 2   }  		\right) \label{second_pi_bound}  \\
											        & \ge& 	(p-k)  \mathrm{exp}\left( - \frac{ (1+2\delta)  (  \ln \frac{p}{k} + 1 )     ) }{   2R    }  		\right)   \nonumber \\
											        & =& (p-k) \mathrm{e}^{ - \frac{ 1 + 2 \delta}{2R }  }  \left(  \frac{k}{p}	\right)^{ \frac{1+ 2 \delta}{ 2R}  }   \nonumber.
\end{eqnarray}
Note that in~\eqref{first_pi_bound} we used the fact that $1 - 1/x > \mathrm{e}^{ -1/( x-1 ) } $, for $x \ge 2$, while in~\eqref{second_pi_bound} the fact that $ { p \choose k } \le  \left( \frac{ \mathrm{e} p  }{k  } \right)^k $.

Note also that our assumption  that $R \ge \frac{1}{2}  $ implies that as $p \rightarrow +\infty$, 
$$ (p-k) \mathrm{e}^{ - \frac{ 1 + 2 \delta}{2R }  }  \left(  \frac{k}{p}	\right)^{ \frac{1+ 2 \delta}{ 2R}  }    = \Omega( k^{ 2 \delta  }  )=\omega(1).$$ 
Thus, since each non-defective item participates in a negative test independently of the other non-defective items,  standard Chernoff estimates  imply that
\begin{align*}
\Pr\left[   p' -k \le (1-\delta) (p-k) \mathrm{e}^{ - \frac{ 1 + 2 \delta}{2R }  }  \left(  \frac{k}{p}	\right)^{ \frac{1+ 2 \delta}{ 2R}  }   \big| n_{ \mathrm{neg}} = N_{ \mathrm{neg} }   \right]  \le \mathrm{exp}( - \delta^2 k^{2 \delta }  /3 ) \xrightarrow{p \rightarrow +\infty }  0.
\end{align*}
The above concludes the proof since $ N_{\mathrm{neg} } \le (1+2\delta) n/2 $ is chosen arbitrarily,    we have already established that $n_{\mathrm{neg} } \le (1+2\delta) n/2 $ asymptotically almost surely and, recalling that $\delta \le \eta/10$, we have  
\begin{align*}
 (1-\delta) (p-k) \mathrm{e}^{ - \frac{ 1 + 2 \delta}{2R }  }  \left(  \frac{k}{p}	\right)^{ \frac{1+ 2 \delta}{ 2R}  } \ge (1-\eta)  p  \left(  \frac{k}{p}	\right)^{ \frac{1+ \eta}{ 2R}  }   
\end{align*}
for sufficiently large $p$.



\subsection{Proof of Lemma~\ref{lem:monot}}

Assume that $\Phi$ satisfies the $(\zeta,W,H)$-OGP for some for some $\zeta<M<M+W<\arg \min_{\ell=0,1,\ldots,k} \phi(\ell)$. Then by the definition of the OGP and of  function $\phi$ it holds $$\phi(\zeta)\leq r_{p'} < \phi(\zeta+W).$$If by Euclidean division $\zeta=W \lfloor \zeta/W \rfloor +u$ for $u \in \{0,1,\ldots,W-1\}$, then we have $\zeta+W=W( \lfloor \zeta/W \rfloor+1) +u$. In particular,  $$\phi(W \lfloor \zeta/W \rfloor +u) < \phi(W( \lfloor \zeta/W \rfloor+1) +u),$$implying that indeed $\phi( W \ell+u)$ is not non-increasing for $\ell=0,1,2,\ldots,\lfloor M/W \rfloor.$

\subsection{Proof of Proposition \ref{prop_fm}}

Let us first fix a $\lambda \in [0,1).$ Using Lemma \ref{eta_thing} for some $0<\epsilon<1-\lambda$ we have that \begin{align}\label{first}
0<\frac{\ln \left[\binom{k}{ \lfloor \lambda k \rfloor }\binom{p' -k }{ \lfloor (1-\lambda)k \rfloor}\right]}{n_{\mathrm{pos} }}<(1-\lambda)\ln 2<\ln 2,
\end{align}a.a.s. as $p \rightarrow +\infty$. By elementary inspection the function that sends $x \in (0,1-2^{\lambda-1}]$ to $\alpha(x,1-2^{\lambda-1})$ is continuous, strictly decreasing with image $[0,\ln 2).$ Hence given (\ref{first}) we conclude that there exists a unique $x=F_{p'}(\lambda)$ satisfying \eqref{func_prop} for this fixed value of $\lambda.$

For the last part, let us fix some $\epsilon>0.$ Using again Lemma \ref{eta_thing} we conclude that for some $\delta=\delta(\epsilon)>0$ it holds for all $\lambda \in [0,1-\frac{\epsilon}{2}]$ that
 \begin{align}\label{first2}
\delta(1-2^{\lambda-1}) \leq F_{p'}(\lambda)  \leq (1-\delta)(1-2^{\lambda-1}),
\end{align}a.a.s. as $p \rightarrow +\infty.$ In particular, $F_{p'}(\lambda)$ is the unique solution of $$G(x,\lambda):=\alpha(x,1-2^{\lambda-1})-\frac{\ln \left[\binom{k}{ \lfloor \lambda k \rfloor }\binom{p' -k }{ \lfloor (1-\lambda)k \rfloor}\right]}{n_{\mathrm{pos} }}=0$$ where $(x,\lambda)$ for now on are assumed to always satisfy $x \leq (1-\delta)(1-2^{\lambda-1})$ and $1>x,1-2^{\lambda-1} >0$.

Towards proving the continuous differentiability of $F_{p'}(\lambda)$ on some $\lambda^* \in [0,1-\epsilon]$, notice that we can restrict $\lambda$ in a sufficiently small interval around $\lambda^* \in [0,1-\epsilon]$ which is always included in $[0,1-\frac{\epsilon}{2}]$, so that the term $\frac{\ln \left[\binom{k}{ \lfloor \lambda k \rfloor }\binom{p' -k }{ \lfloor (1-\lambda)k \rfloor}\right]}{n_{\mathrm{pos} }}$ is constant as a function of $\lambda$ on that interval. Now in this small interval $G(x,\lambda)$ is continuous differentiable and furthermore since $x \leq (1-\delta)(1-2^{\lambda-1})$ and $1>x,1-2^{\lambda-1} >0$, it also holds $\frac{\partial G}{\partial x} (x,\lambda^*) =\frac{\partial \alpha}{\partial x} (x,\lambda^*) \not =0$ based on Lemma \ref{lem:tayl}. Hence, by the two dimensional implicit function theorem we can conclude indeed the continuous differentiability of $F_{p'}(\lambda)$ at $\lambda=\lambda^*.$

\subsection{Proof of Lemma~\ref{lem:tayl}} 

If we are able to prove that  $$-\frac{ \partial }{ \partial x} \alpha(x,y) \geq c_0$$ then the second  claim of the lemma follows immediately by the mean value theorem. Recalling Lemma~\ref{partial_derivatives} we see that, equivalently,  it suffices to show 
\begin{align}\label{targetaki} 
\ln\left( \frac{ y(1-x)}{(1-y)x }  \right)\geq c_0.
\end{align}
Using the hypothesis we obtain
\begin{align*}
    \frac{ y(1-x)}{(1-y)x} \geq \frac{ 1-(1-\delta)y}{(1-y)(1-\delta)} \geq \frac{1}{1-\delta}>1,
\end{align*}
which implies~\eqref{targetaki}  for $c_0:=\ln \frac{1}{1-\delta}>0$, concluding the proof.

\subsection{Proof of Lemma~\ref{eta_thing}} 

We first prove part (a). We fix a sufficiently small but fixed $\eta>0$ and condition on \eqref{number_positive} and \eqref{pi_prime_estimation} for this value of $\eta$. The exact bounds on $\eta$ become apparent by going through the proof.
Note that as an easy consequence of \eqref{pi_prime_estimation} it holds \begin{align}\label{ln}
    \ln \frac{p'}{k}=(1-\frac{1}{2R}+O(\eta))\ln \frac{p}{k}
\end{align} and in particular since $k=o(p)$ and $\frac{1}{2}<R<1$ we can choose $\eta$ sufficiently small such that it also holds $k=o(p').$ Using these observations and the elementary $\binom{a}{b}=(1+o(1))a \log \frac{b}{a}$ if $a=o(b),$ it follows that a.a.s. as $p \rightarrow +\infty,$ it holds that for all $\lambda \in [0, 1-\epsilon ] $,

\begin{align*}
  \frac{\ln \left[\binom{k}{ \lfloor \lambda k \rfloor }\binom{p' -k }{ \lfloor (1-\lambda)k \rfloor}\right]}{n_{\mathrm{pos} }} &= \frac{\ln \left[\binom{p' -k }{ \lfloor (1-\lambda)k \rfloor}\right]}{n_{\mathrm{pos} }}+O\left(\frac{k}{n_{\mathrm{pos} }}\right) \\
   &= 2\frac{\ln \left[\binom{p' -k }{ \lfloor (1-\lambda)k \rfloor}\right]}{n}+O\left(\frac{k}{n}\right)+O(\eta), \text{ using } \eqref{number_positive}\\
    &= 2\frac{\lfloor (1-\lambda)k \rfloor \ln \frac{p' -k }{ \lfloor (1-\lambda)k \rfloor}}{n}+O\left(\frac{k}{n}\right)+O(\eta) \\
  &= 2\frac{((1-\lambda)k+O(1)) \ln \frac{p' -k }{ \lfloor (1-\lambda)k \rfloor}}{n}+O\left(\frac{k}{n}\right)+O(\eta)\\
  &= 2R\frac{(1-\lambda)k \ln \frac{p' -k }{ \lfloor (1-\lambda)k \rfloor}}{ k \log_2 \frac{p}{k}}+O\left(\frac{1}{k} \right)+O\left(\frac{k}{ k \log_2 \frac{p}{k}}\right)+O\left(\eta\right) \\.
\end{align*}Now using that $k=o(p'),k=o(p)$ and \eqref{ln} as well as that $k$ grows with $p$ we can further conclude that

\begin{align*}
  \frac{\ln \left[\binom{k}{ \lfloor \lambda k \rfloor }\binom{p' -k }{ \lfloor (1-\lambda)k \rfloor}\right]}{n_{\mathrm{pos} }}
&= 2R\ln 2\frac{(1-\lambda) (\ln (\frac{p' }{ k })+O(\frac{k}{p'
}))}{  \ln \frac{p}{k}}+O\left(\eta\right) \\
&= 2R(1-\frac{1}{2R}+O(\eta)) \ln 2\frac{(1-\lambda) \ln \frac{p}{k}}{  \ln \frac{p}{k}}+O\left(\eta\right) \\
&=(2R-1+O(\eta)) \ln 2(1-\lambda)+O\left(\eta\right) \\
&= (2R-1) \ln 2(1-\lambda)+O\left(\eta\right), \\
\end{align*} which, since  $\epsilon<1-\lambda <1$, it implies
\begin{align*}
    \frac{\ln \left[\binom{k}{ \lfloor \lambda k \rfloor }\binom{p' -k }{ \lfloor (1-\lambda)k \rfloor}\right]}{n_{\mathrm{pos}} \ln2 (1-\lambda) }=2R-1+O(\eta).
\end{align*}Now recall that we assume $\frac{1}{2}<R<1$ and therefore $0<2R-1<1$. Hence choosing $\eta$ small enough allows to conclude that the ratio is bounded away from $0$ and $1$ a.a.s. as $p \rightarrow +\infty.$

For the second part, we define the function $Q(x,y)=\alpha(x,y)/\ln( \frac{1}{1-y})$ which is continuous on $T:=\{(x,y) \in [\frac{1}{2},1-2^{-\epsilon}]^2: x \leq y\}$ with image $Q(T)=[0,1)$. Furthermore, by direct inspection we have that $Q(x,y)$ approaches $1$ if and only if $x/y$ approaches $0$ and $Q(x,y) $ approaches $ 0 $ if and only if $x/y$ approaches $1.$ Hence, by the definition of $F_{p'}(\lambda)$, in order to show the lemma it suffices to show that there exists $\delta=\delta(\epsilon) \in (0,1)$ such that, for sufficiently large $p$, it holds for all $\lambda \in [0,1-\epsilon]$: \begin{align} \label{unif0}
  \delta \leq  Q(F_{p'}(\lambda),1-2^{\lambda-1}) \leq 1-\delta,
\end{align} a.a.s. as $p \rightarrow +\infty $. Indeed, under \eqref{unif0} we have for all $\lambda \in [0,1-\epsilon]$, $(F_{p'}(\lambda),1-2^{\lambda-1}) \in Q^{-1}[\delta,1-\delta]$ a.a.s. as $p \rightarrow +\infty $ and, therefore, the result follows as using the continuity of $Q$, and that $Q^{-1}[\delta,1-\delta]$ is a compact set included in the square $[\frac{1}{2},1-2^{-\epsilon}]^2$ and has positive distance from the lines $x/y=1$ and $x/y=0.$ 

Notice that \eqref{unif0} from the definition of $Q$ is equivalent with the property that a.a.s as $p \rightarrow +\infty$ for all $\lambda \in [0,1]$
\begin{align} \label{unif}
  \delta \leq  \frac{\alpha(F_{p'}(\lambda),1-2^{\lambda-1})}{\log \frac{1}{1-2^{\lambda-1}}}= \frac{\ln \left[\binom{k}{ \lfloor \lambda k \rfloor }\binom{p' -k }{ \lfloor (1-\lambda)k \rfloor}\right]}{n_{\mathrm{pos}}\ln 2 (1-\lambda) } \leq 1-\delta,
\end{align}which is proven in the first part of the lemma.

\subsection{Proof of Lemma~\ref{lem:upper_bound}}

We first claim that to show the  lemma it suffices to prove that there exists sufficiently small  $\eta'>0$ and $\delta'>0$ such that for all $\lambda,u$ with $\lambda \in [0,1-\epsilon]$ and $\delta' \leq u \leq 1-\delta'$ it holds
\begin{align}\label{ineq1}
\alpha((1-2^{\lambda-1})(1-u),1-2^{\lambda-1})<\frac{u}{1+\eta'}(1-\lambda)\ln2.
\end{align} and 
\begin{align}\label{ineq2}
  \delta' \leq  (1+\eta')\frac{\binom{k}{\lambda k}\binom{p-k}{k(1-\lambda)} }{n_{\mathrm{pos} }(1-\lambda)\ln 2  } \leq 1-\delta',
\end{align} a.a.s as $p \rightarrow +\infty$. Indeed the result would then follow by setting $u^*=(1+\eta')\frac{\binom{k}{\lambda k}\binom{p-k}{k(1-\lambda)} }{n_{\mathrm{pos} }(1-\lambda)\ln 2  }$ and observing that using the definition of the function $F_{p'}$ it holds \begin{align*}
  \alpha((1-2^{\lambda-1})(1-u^*),1-2^{\lambda-1})<\alpha(F_p(\lambda),1-2^{\lambda-1}).
\end{align*}Now using the fact that $\alpha(x,y)$ is decreasing, for fixed $y$ as a function of $x \leq y$, we conclude that it must hold for all $\lambda \in [0,1-\epsilon]$ \begin{align*}
   F_p(\lambda) \leq  (1-2^{\lambda-1})(1-u^*)=(1-2^{\lambda-1})\left(1-(1+\eta')\frac{\binom{k}{ \lfloor \lambda k \rfloor }\binom{p'-k}{ \lfloor k(1-\lambda) \rfloor } }{n_{\mathrm{pos} } (1-\lambda)\ln 2  }\right),
\end{align*}which is what we want.

Note that \eqref{ineq2} is true for sufficiently small $\eta',\delta'>0$ directly by the first part of Lemma \ref{eta_thing}.

We now prove the deterministic inequality \eqref{ineq1} by fixing some $\delta' >0$ satisfying \eqref{ineq2} for some value of $\eta'$, say $\eta'_1>0$, and then choosing $\eta'$ sufficiently small with $\eta' \in (0,\eta'_1)$, which allows us to establish \eqref{ineq1}.

Notice that the $\lambda,u$ for which we want to establish \eqref{ineq1}, satisfy $\lambda \in [0,1-\epsilon]$ and $\delta' \leq u \leq 1-\delta'$, correspond to a compact set. Hence by continuity to establish the strict inequality \eqref{ineq1} for some sufficiently small $\eta' \in (0,\eta'_1)$ it in fact suffices to show it for $\eta'=0$ that is prove for all $\lambda \in [0,1-\epsilon]$ and $\delta' \leq u \leq 1-\delta'$,
\begin{align*}
\alpha((1-2^{\lambda-1})(1-u),1-2^{\lambda-1})<u(1-\lambda)\ln 2. 
\end{align*}
Equivalently,  setting $x=1-2^{\lambda-1} \in [\frac{1}{2},1-2^{-\epsilon}]$, it suffices to show that for all $x \in [\frac{1}{2},1-2^{-\epsilon}]$ and $\delta' \leq u \leq 1-\delta'$,
\begin{align}\label{bbb}
\alpha(x(1-u),x)<u\ln \frac{1}{1-x}, 
\end{align}
Now using the definition of $\alpha$ in~\eqref{bbb} we obtain:
\begin{align*}
x(1-u)\ln (1-u)+(1-x(1-u))\ln \left(\frac{1-x(1-u)}{1-x} \right)<-u\ln (1-x)
\end{align*}
which is equivalent to 
\begin{align}\label{eq:goal}
x(1-u)\ln \frac{(1-u)(1-x)}{1-x(1-u)}+\ln(1-x(1-u))<(1-u)\ln (1-x).
\end{align} 
Finally, to prove~\eqref{eq:goal} we fix $u\in [\delta', 1-\delta']$  and consider  the function
 $$G(x)=x(1-u)\ln \frac{(1-u)(1-x)}{1-x(1-u)}+\ln(1-x(1-u))-(1-u)\ln (1-x)$$
 defined in $x \in [\frac{1}{2},1-2^{-\epsilon}] $. Clearly, it suffices to show that $G(x)<0$ for all $x,u$ of interest.

Towards that end, we calculate the derivative with respect to $x$:
$$G'(x)=(1-u)\ln \frac{(1-u)(1-x)}{1-x(1-u)}-\frac{xu}{(1-x)(1-x(1-u))}-\frac{1-u}{1-x(1-u)}+(1-u)\frac{1}{1-x}$$
which, after some elementary algebra, simplifies to
\begin{align}\label{ccc}
G'(x)=(1-u)\ln \frac{(1-u)(1-x)}{1-x(1-u)}-\frac{xu^2}{(1-x)(1-x(1-u))}.
\end{align}
Now the righthand side of~\eqref{ccc} is clearly strictly less than zero since $0<\delta' \leq u \leq 1-\delta'<1$ and $0<\frac{1}{2} \leq x \leq 1-2^{-\epsilon}<1$ and, therefore, both of its summands are negative.

As a consequence, we get that $G(x)$ is decreasing and, in particular,
\begin{align}\label{ksydia}
G(x) \leq \lim_{x \rightarrow \frac{1}{2}} G(x)=\frac{(1-u)}{2}\ln \frac{1-u}{1+u}+\ln \left(\frac{1+u}{2} \right)+(1-u)\ln 2.
\end{align}
Recalling the definition of the binary entropy $h(w)=-w\log_2w-(1-w)\log_2(1-w), w \in [0,1]$ we see that the righthand side of~\eqref{ksydia} (after we divide by $\ln2 $) can be written as:
\begin{eqnarray*}
\frac{\lim_{x \rightarrow \frac{1}{2}} G(x)}{ \ln 2 }     & =& \frac{(1-u)}{2}\log_2 \frac{1-u}{1+u}+\log_2 \left(\frac{1+u}{2} \right)+(1-u) \\
						    & =& \frac{(1-u)}{2}\log_2  \left( 2\cdot \frac{(1-u)}{2} \right)  - \frac{(1-u)}{2} \log_2 \left(2 \cdot \frac{1+u}{2} \right)   + \log_2\left( \frac{ 1+u}{2 } \right) +(1-u) \\
						    & =&  \frac{(1-u)}{2}\log_2 \left(  \frac{1-u}{2} \right)  - \frac{(1-u)}{2} \log_2 \left( \frac{1+u}{ 2} \right)  +  \log_2\left( \frac{ 1+u}{2 } \right) +(1-u)  \\
						    & =& \frac{(1-u)}{2}\log_2 \left(  \frac{1-u}{2} \right)  + \frac{(1+u)}{2} \log_2 \left( \frac{1+u}{ 2} \right)   +(1-u)   \\
						    & =&-  h\left(  \frac{ 1-u}{2 } \right) + (1-u).
\end{eqnarray*}
Therefore,  $\lim_{x \rightarrow \frac{1}{2}} G(x)<0$  if and only if $h(\frac{1-u}{2})>1-u$. The latter  inequality follows from the fact that $h$ is strictly concave in $[0,\frac{1}{2}]$ and, further,  $h(0)=0$ and $h(\frac{1}{2})=1.$ Indeed, as a corollary of this fact we obtain that $h(w)>2h(\frac{1}{2})w=2w$   for any $w \in (0,\frac{1}{2})$  and, thus, the result claim follows  by setting $w=\frac{1-u}{2} \in [\frac{\delta'}{2},\frac{1-\delta'}{2}] \subset (0,\frac{1}{2}).$ Thus, we have shown that $G(x)< 0$ for every $x,u$ of interest, completing the proof of the lemma.

\section{Proofs omitted from Section~\ref{local_search_proof}}\label{omitted_local}
In this section we present the proofs of  Lemmas~\ref{whatsleft},~\ref{dist_lemma},~\ref{SwapLimMax}  and of Corollary~\ref{cor:prob_bound}.

\subsection{Proof of Lemma~\ref{whatsleft}} 
Let   $\delta' \in  (0,1)$ be a constant to be defined later.  Our first step is to prove that the number of negative tests, $n_{\mathrm{neg}}$, is at least $(1-\delta') n(1- \frac{\nu}{k})^k$ asymptotically almost surely. To see this, observe that each test is negative with probability $(1- \frac{\nu}{k})^{k}$ independently of all other tests. Hence, by linearity of expectation we have  $\ex[n_{\mathrm{neg}} ] = n(1- \frac{\nu}{k})^k $ and standard Chernoff estimates imply that 
$$\Pr[  n_{\mathrm{neg} } < (1-\delta') \ex[  n_{\mathrm{neg} }]   ] \le \mathrm{exp}(- (\delta')^2 \ex[ n_{\mathrm{neg} }] /2 )   \xrightarrow{ p \rightarrow +\infty} 0.$$

Next, we will condition on that $n_{\mathrm{neg} }  = N_{\mathrm{neg} }$, where $ N_{\mathrm{neg} } \ge (1-\delta') n(1- \frac{\nu}{k})^k    $ is a fixed value. In this case, since the probability that a fixed non-defective item appears in a particular test
is $\nu/k$, we see that
\begin{eqnarray}
\mathbb{E}[q \mid n_{\mathrm{neg} }  = N_{\mathrm{neg} } ] &= & (p-k) \left( 1 - \frac{\nu}{k} \right)^{ N_{ \mathrm{neg}} }      \nonumber \\
     & \le & p \cdot \mathrm{exp} \left( -(1-\delta') \frac{\nu}{k}  n  \mathrm{e}^{ -  \frac{ k}{ \frac{k}{\nu} -1}  } 	\right)	\label{quicky}\\
    & \le & p  \left( \frac{k}{p} \right)^{  \frac{ (1-\delta') \nu \cdot \mathrm{exp}\left(-\nu (1 +  \frac{ \nu}{2k } ) \right) }  {R \ln2}   }  \label{quicky2},
\end{eqnarray}
for sufficiently large $p$. Note that in~\eqref{quicky} we used the facts that $1 - x \le \mathrm{e}^{-x}$,  for every $x \in \mathbb{R}$, and that $1 - 1/x > \mathrm{e}^{ -1/( x-1 ) } $, for $x \ge 2$, while in~\eqref{quicky2} the fact that ${p \choose k } \ge \left( \frac{p  }{k }   \right)^k$   and the definition of $R$.

Note also that our assumption  that $R \ge \frac{ \nu \mathrm{e}^{ -\nu }  }{  \ln 2}  $ implies that as $p \rightarrow +\infty$, $$ p  \left( \frac{k}{p} \right)^{  \frac{ (1-\delta') \nu \cdot \mathrm{exp}\left(-\nu (1 +  \frac{ \nu}{2k } ) \right) }  {R \ln2}   }  = \Omega( p^{ \frac{\delta'}{2}})=\omega(1).$$ Thus, since each non-defective item participates in a negative test independently of the other non-defective items,  standard Chernoff estimates  imply that
\begin{align*}
\Pr\left[   q > (1+\eta)  p  \left( \frac{k}{p} \right)^{  \frac{(1-\delta') \nu \cdot \mathrm{exp}\left(-\nu (1 +  \frac{ \nu}{2k } ) \right) }  {R \ln2}   }  \big| n_{ \mathrm{neg}} = N_{ \mathrm{neg} }   \right]  \le \mathrm{exp}( - \eta^2 p^{ \delta'/2 } /3 ) \xrightarrow{p \rightarrow +\infty }  0.
\end{align*}
Choosing $\delta' = \eta$, concludes the proof, since $ N_{\mathrm{neg} } \ge (1-\delta') n(1- \frac{\nu}{k})^k    $ is chosen arbitrarily, and   we have already established that $n_{\mathrm{neg} } \ge(1-\delta') n(1- \frac{\nu}{k})^k   $ asymptotically almost surely.

\subsection{Proof of Lemma~\ref{dist_lemma}}

We proceed by analyzing the (clearly i.i.d.) impact of each test to the sum $ \sum_{ i \in \mathrm{ND}_{\sigma}, j \in \mathrm{D}_{\setminus \sigma}  }  	( \Theta_j -  \Theta_{i \overline{j} } ) $. Note that each test can either contribute positively to the sum $ \sum_{ i \in \mathrm{ND}_{\sigma}, j \in \mathrm{D}_{\setminus \sigma}  }  	( \Theta_j -  \Theta_{i \overline{j} } ) $, by being counted by  variables in the family $\{ \Theta_j \}_{j \in \mathrm{D}_{\setminus \sigma } }$, or negatively, by being counted by variables in the family $\{ \Theta_{i \overline{j}} \}_{i \in \mathrm{ND}_{\sigma}, j \in \mathrm{D}_{\setminus \sigma} }  $, or not at all.  Below we fix a certain test and estimate its contribution which we denote by $C_q^{(\ell) }$. Clearly $C_q^{ (\ell) }$ are i.i.d. random variables and $ \sum_{ i \in \mathrm{ND}_{\sigma}, j \in \mathrm{D}_{\setminus \sigma}  }  	( \Theta_j -  \Theta_{i \overline{j} } )   	\overset{d}{=}     \sum_{q = 1}^{ n} C_q^{(\ell) } .$

To be counted by the variables in the family $\{ \Theta_j \}_{j \in D_{\setminus \sigma} }$, the test should not contain any item of $\sigma$, which happens with probability $(1-\frac{\nu}{k})^{ k'}  $. Conditional on this event, the positive contribution of  the test to the sum $ \sum_{ i \in \mathrm{ND}_{\sigma}, j \in \mathrm{D}_{\setminus \sigma}  }  	( \Theta_j -  \Theta_{i \overline{j} } ) $ equals simply to  $| \mathrm{ND}_{\sigma}| \cdot X = (k'-\ell )\cdot X$, where $X$ is the random variable equal to the number of defective items in $D_{\setminus \sigma }$ tht participate in the test.  Clearly, $X \sim \mathrm{Bin}(k-\ell, \frac{\nu}{k})$.

For each non-defective item $i \in \mathrm{ND}_{\sigma}$, let $E_i$ denote the event that $i$ is the only element in $\sigma$ that participates in the test, and that the test is positive. Observe that events $E_i$ are mutually-exclusive by definition and, therefore, the negative contribution of the test to the sum $ \sum_{ i \in \mathrm{ND}_{\sigma}, j \in \mathrm{D}_{\setminus \sigma}  }  	( \Theta_j -  \Theta_{i \overline{j} } ) $ is given by  $\mathbbm{1}(\cup_{i \in \mathrm{ND}_{\sigma}}  E_i) \cdot Y = \sum_{ i \in \mathrm{ND}_{\sigma} }  \mathbbm{1}(E_i) \cdot Y$,
where $Y \sim \mathrm{Bin}(k-\ell, 1- \frac{\nu}{k}  )  $ counts the number of items in $\mathrm{D}_{  \setminus \sigma} $ that do not participate in the test.  Now it is not hard to see that
\begin{align*}
\Pr[E_i]  =  \frac{\nu}{k} \left( 1- \frac{\nu}{k} \right)^{k'-1}   \left(1 - \left(1 - \frac{\nu}{k} \right)^{k-\ell} \right)
\end{align*}
and, therefore
\begin{align*}
\Pr[\cup_{i \in \mathrm{ND}_{\sigma}  } E_i] = \sum_{i \in \mathrm{ND}_{\sigma}  } \Pr[E_i] = \nu \left(\frac{k'}{k}- \frac{\ell}{k} \right) \left( 1- \frac{\nu}{k} \right)^{k'-1}   \left(1 - \left(1 - \frac{\nu}{k} \right)^{k-\ell} \right).
\end{align*}

Combining the findings in the above two paragraphs, we conclude that the random variable $C_q$ indeed follows the distribution as described in the statement of the lemma, which completes the proof.

\subsection{Proof of Corollary~\ref{cor:prob_bound}}

We use the fact that $\mathrm{Bin}(k-\ell,1-\frac{\nu}{k} ) \le k-\ell$ with probability $1$ to obtain that  each of the  random variables  $C_q^{( \ell)  } $, $q \in \{1, \ldots, n \}$ in Lemma~\ref{dist_lemma} is stochastically dominated by a random variable $D^{(\ell)}_q$, whose distribution samples with probability 
$(1- \frac{\nu}{k} )^{k'}$ from a (scaled) binomial random variable $(k'-\ell)\mathrm{Bin}(k-\ell, \frac{\nu}{k})$, it is equal to $-(k-\ell)$ with probability $\nu \left(\frac{k'}{k}- \frac{\ell}{k} \right) \left( 1- \frac{\nu}{k} \right)^{k'-1}   \left(1 - \left(1 - \frac{\nu}{k} \right)^{k-\ell} \right)$, and it is $0$, otherwise.

We then observe that we can couple each $D^{(\ell)}_q$ variable with the corresponding $T^{(\ell)}_q$ variable in the obvious way,  so that
$$\sum_{q=1}^{n } D^{(\ell)}_q  = \sum_{q=1}^{n}(k-\ell) T^{(\ell)}_q, $$
and, therefore, we obtain:
$$  \Pr \left[  \sum_{ i \in \mathrm{ND}_{\sigma}, j \in \mathrm{D}_{\setminus \sigma}  }   	( \Theta_j -  \Theta_{i \overline{j} } )   	\le 0	\right]  \le \Pr \left[  \sum_{q = 1}^{ n} D^{(\ell)}_q   	\le 0	\right]   \le  \Pr \left[  \sum_{q = 1}^{ n} T^{(\ell)}_q   	\le 0	\right], $$
concluding the proof.
\subsection{Proof of Lemma~\ref{SwapLimMax}} 

It suffices to show the uniform convergence over $\zeta \in [0,1-\delta)$ of the sequence of functions mapping each  $ \zeta \in [0,1-\delta) $ to $ \frac{\ln\mathbb{E}\left[ \mathrm{e}^{-\lambda T^{(\lfloor \zeta k \rfloor)}_1} \right]  }{ (1+\epsilon -\zeta )\ln2 },$ as $p \rightarrow +\infty$, Since the denominator is independent of $p$ and bounded away from zero by $\max\{ \epsilon, \delta  \}\ln 2 >0$ (recall that we have assumed $\epsilon \delta > 0$) it suffices to show uniform convergence over $\zeta \in [0,1-\delta)$ of the sequence of functions mapping each $ \zeta \in [0,1-\delta)$ to $  \ln\mathbb{E}\left[ \mathrm{e}^{-\lambda T^{(\lfloor \zeta k \rfloor)}_1} \right],$ as $p \rightarrow +\infty$.

Notice that if $\lambda=0$ the uniform convergence is immediate as our sequence of functions is identically equal to zero. Hence, from now on we assume without loss of generality that $\lambda>0.$

By direct computation for every $\zeta \in [0,1-\delta)$ it holds
\begin{align}\label{moment_gen}
\mathbb{E}\left[ \mathrm{e}^{-\lambda T^{(\lfloor \zeta k \rfloor)}_1} \right]&=\left(1-\frac{\nu}{k} \right)^{k'}\left( \left(\mathrm{e}^{-\mu}\frac{\nu}{k}+1-\frac{\nu}{k}\right)^{k-\lfloor \zeta k \rfloor}-1\right) \nonumber \\
												&+\left(\frac{k'}{k}-\frac{\lfloor \zeta k \rfloor}{k}\right)\nu\left(1-\frac{\nu}{k}\right)^{k'-1}\left( 1-\left(1-\frac{\nu}{k}\right)^{k-\lfloor \zeta k \rfloor}\right)(\mathrm{e}^{\lambda}-1)+1,
\end{align}
where $\mu = \mu( \lambda, \epsilon, \zeta,k) = \lambda  \frac{  \lfloor (1+\epsilon)k  \rfloor -   \lfloor \zeta k \rfloor   }{k - \lfloor \zeta k \rfloor }$. Since $k \rightarrow +\infty$ as $p \rightarrow +\infty$ 
we conclude by using elementary calculus that
\begin{align}\label{limitara}
  \lim_{p \rightarrow +\infty}   \mathbb{E}\left[ \mathrm{e}^{-\lambda T^{(\lfloor \zeta k \rfloor)}_1} \right]& = \mathrm{e}^{-\nu (1+\epsilon) }(\mathrm{e}^{\nu(1-\zeta)  (\mathrm{e}^{- \frac{\lambda (1 + \epsilon -\zeta)}{1 -\zeta }  }-1) }-1)  \nonumber \\ 
  &+  (1+\epsilon-\zeta)\nu \mathrm{e}^{-\nu (1+\epsilon) }(1-\mathrm{e}^{-\nu(1-\zeta)})(\mathrm{e}^{\lambda}-1)  +1.
\end{align}
Now notice that since $\epsilon, \nu>0$ and $\lambda \geq 0, \zeta<1$, for all $\zeta \in [0,1-\delta)$, we have:
\begin{align}\label{lb:limit}
\lim_{p \rightarrow +\infty}   \mathbb{E}\left[ \mathrm{e}^{-\lambda T^{(\lfloor \zeta k \rfloor)}_1} \right]>1-\mathrm{e}^{-\nu(1+\epsilon) }(1 - (1+\epsilon- \zeta) \nu ( 1 - \epsilon^{- \nu(1-\zeta) } )(\mathrm{e}^{ \lambda} -1  )>1-\mathrm{e}^{-\nu(1+\epsilon)}>0.
    \end{align}Hence the limiting function is always positive. Therefore, using the elementary inequality $\ln x \leq x-1,$ for all $x>0$ to conclude the desired uniform convergence it suffices to show that
\begin{align}
  \lim_{p \rightarrow +\infty}  \max_{\zeta \in [0,1-\delta)}  \left| \frac{\mathbb{E}\left[ \mathrm{e}^{-\lambda T^{(\lfloor \zeta k \rfloor)}_1} \right]}{\lim_{p \rightarrow +\infty} \mathbb{E}\left[ \mathrm{e}^{-\lambda T^{(\lfloor \zeta k \rfloor)}_1} \right]}-1\right| =0.\label{unif_goal}
\end{align}

In what follows we use the $O(\cdot)$ notation, including constants possibly depending on the (fixed for the purposes of this lemma) $\nu,\lambda, \epsilon$ but not on $\zeta$. Using that for $a=O(1),$ $(1+a/k)^k=\mathrm{e}^a (1+O(1/k))$, $|\lfloor \zeta k \rfloor /  k-\zeta|\leq 1/k $ and for $x=o(1), \mathrm{e}^x=1+O(x)$, we have that for arbitrary $\zeta \in [0,1-\delta)$ and  $k$ sufficiently large,
\begin{align}
  \left(1-\frac{\nu}{k} \right)^{k'}\left( \left(\mathrm{e}^{-\mu}\frac{\nu}{k}+1-\frac{\nu}{k}\right)^{k-\lfloor \zeta k \rfloor}-1\right) &= \left(1+O\left(\frac{1}{k}\right) \right)\mathrm{e}^{-(1+\epsilon)\nu}\left(\mathrm{e}^{(\mathrm{e}^{-\mu}-1)\nu (1-\frac{\lfloor \zeta k \rfloor}{k})}-1\right) \nonumber\\
  &=\left(1+O\left(\frac{1}{k}\right)\right)\mathrm{e}^{-(1+\epsilon)\nu}\left(\mathrm{e}^{(\mathrm{e}^{-\mu}-1)\nu (1-\zeta+O\left(\frac{1}{k}\right))}-1\right) \nonumber\\
   &=\left(1+O\left(\frac{1}{k}\right)\right)\mathrm{e}^{-(1+\epsilon)\nu}\left(\mathrm{e}^{\nu (\mathrm{e}^{-\mu}-1)\nu (1-\zeta)}-1\right) \nonumber\\
   &=\mathrm{e}^{-\nu}\left(\mathrm{e}^{\nu \left(\mathrm{e}^{-\mu}-1\right)(1-\zeta)}-1\right)+O\left(\frac{1}{k}\right) \label{first_term}.
\end{align}Using the same identities and elementary algebra we also have for arbitrary $\zeta \in [0,1-\delta)$ and  $k$ sufficiently large,
\begin{align}
 & \left( \frac{k'}{k}-\frac{\lfloor \zeta k \rfloor}{k}\right)\nu\left(1-\frac{\nu}{k}\right)^{k'-1}\left( 1-\left(1-\frac{\nu}{k}\right)^{k-\lfloor \zeta k \rfloor}\right)\left(\mathrm{e}^{\lambda}-1\right) \nonumber\\
   &=\left(1 + \epsilon - \zeta+O\left(\frac{1}{k}\right)\right)\left(1+O\left(\frac{1}{k}\right)\right) \nu \mathrm{e}^{-(1+\epsilon)\nu}\left(1-\mathrm{e}^{-\nu(1-\zeta)}\right)\left(\mathrm{e}^{\lambda}-1\right) \nonumber \\
  &=\left(1 + O\left(\frac{1}{k }\right)\right) \left(1+\epsilon-\zeta \right)\nu \mathrm{e}^{-(1+\epsilon)\nu}\left(1-\mathrm{e}^{-\nu(1-\zeta)}\right)\left(\mathrm{e}^{\lambda}-1\right) \nonumber\\
  &= \left(1 + \epsilon - \zeta \right)\nu \mathrm{e}^{-(1+\epsilon)\nu}\left(1-\mathrm{e}^{-\nu(1-\zeta)}\right)\left(\mathrm{e}^{\lambda}-1\right)+O\left(\frac{1}{k }\right). \label{second_term}
\end{align} 
Hence, combining \eqref{moment_gen}, \eqref{limitara} \eqref{first_term} and \eqref{second_term} we conclude that for sufficiently large $p$ (and therefore $k$) it holds
\begin{align}
&\max_{\zeta \in [0,1)}  \left| \frac{\mathbb{E}\left[ \mathrm{e}^{-\lambda T^{(\lfloor \zeta k \rfloor)}_1} \right]}{\lim_{p \rightarrow +\infty} \mathbb{E}\left[ \mathrm{e}^{-\lambda T^{(\lfloor \zeta k \rfloor)}_1} \right]}-1 \right|=\frac{O\left(\frac{1}{k} \right)}{\lim_{p \rightarrow +\infty}   \mathbb{E}\left[ \mathrm{e}^{-\lambda T^{(\lfloor \zeta k \rfloor)}_1} \right]}.
\end{align}But now since from \eqref{lb:limit} the limiting function is uniformly lower bounded by $1-\mathrm{e}^{-(1+\epsilon)\nu}>0$ we conclude that 
\begin{align*}
    \max_{\zeta \in [0,1)}  \left| \frac{\mathbb{E}\left[ \mathrm{e}^{-\lambda T^{(\lfloor \zeta k \rfloor)}_1} \right]}{\lim_{p \rightarrow +\infty} \mathbb{E}\left[ \mathrm{e}^{-\lambda T^{(\lfloor \zeta k \rfloor)}_1} \right]}-1\right|= O\left(\frac{1}{k \left(1-\mathrm{e}^{-(1+\epsilon)\nu}\right)} \right)= O\left(\frac{1}{k} \right),
\end{align*}from which \eqref{unif_goal} follows. This completes the proof of the lemma.

\section{Proof of Corollary~\ref{no_false_negatives}}\label{CorProof} 

Notice that at each step the algorithm either finds a new state $\sigma$ which satisfies a strictly larger number of positive tests, or it terminates. As a consequence, it terminates in at most $n$ steps, as there are at most $n$ positive tests in total.  

Now, if we can show that no $(0,1/100)$-bad local minima exist, then this means that the algorithm will never terminate in a state $\sigma$ which does not contain all $k$ defective items, concluding the proof. To do this, we apply Theorem~\ref{local_search_theorem} with
$\nu =  \ln (5/2) $ and $\lambda = \ln(100/83)$ and $\epsilon = 1/100$. 

We prove the following lemma.
\begin{lemma}\label{monotonicity}
Let $\nu = \ln(5/2)$, $\lambda = \ln(100/83)$ and $\epsilon = 1/100$. The function 
\begin{align*}
Q( \zeta) = -\frac{\ln\left(\mathrm{e}^{-\nu(1+\epsilon) }(\mathrm{e}^{\nu(1-\zeta)  (\mathrm{e}^{-\frac{\lambda (1 + \epsilon -\zeta)}{1 -\zeta }}-1) }-1)  +  (1+ \epsilon-\zeta)\nu \mathrm{e}^{-\nu(1+\epsilon)  }(1-\mathrm{e}^{-\nu(1-\zeta)})(\mathrm{e}^{\lambda}-1)  +1 \right)  }{ (1+\epsilon-\zeta) \ln2}
\end{align*}
is increasing in $\zeta \in [0, 1)$.
\end{lemma}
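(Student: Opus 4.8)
The plan is to reduce the monotonicity statement to a single one‑variable inequality for the logarithmic derivative of $Q$, and then to certify that inequality by direct estimation using the prescribed numerical values $\nu=\ln(5/2)$, $\lambda=\ln(100/83)$, $\epsilon=1/100$. First I would abbreviate the argument of the logarithm in \eqref{formula} by
\[
g(\zeta):=\mathrm{e}^{-\nu(1+\epsilon)}\bigl(\mathrm{e}^{\nu(1-\zeta)(\mathrm{e}^{-\lambda(1+\epsilon-\zeta)/(1-\zeta)}-1)}-1\bigr)+(1+\epsilon-\zeta)\nu\,\mathrm{e}^{-\nu(1+\epsilon)}\bigl(1-\mathrm{e}^{-\nu(1-\zeta)}\bigr)(\mathrm{e}^{\lambda}-1)+1,
\]
so that $Q(\zeta)=-\log_2 g(\zeta)/(1+\epsilon-\zeta)$. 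Observe that $g(\zeta)$ is exactly the limiting moment generating function of \eqref{gen_function}, so $g(\zeta)>0$ automatically, and in fact $g(\zeta)>1-\mathrm{e}^{-\nu(1+\epsilon)}>0$ as in \eqref{lb:limit}; a short separate estimate with the numerical values then shows $g(\zeta)<1$ on all of $[0,1)$ (the negative first summand dominates the positive middle summand in absolute value), hence $\ln g(\zeta)<0$ and $Q(\zeta)>0$. Since $\ln 2>0$, the claim is equivalent to proving that $R(\zeta):=\ln g(\zeta)/(1+\epsilon-\zeta)$ is strictly decreasing on $[0,1)$.

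Next I would differentiate. A direct computation gives
\[
R'(\zeta)=\frac{(1+\epsilon-\zeta)\,g'(\zeta)/g(\zeta)+\ln g(\zeta)}{(1+\epsilon-\zeta)^2},
\]
so, using $g>0$, it suffices to show that $\psi(\zeta):=(1+\epsilon-\zeta)\,g'(\zeta)+g(\zeta)\ln g(\zeta)<0$ for every $\zeta\in[0,1)$. Here $g'(\zeta)$ is an explicit elementary function: differentiating the first summand of $g$ requires $\tfrac{d}{d\zeta}\bigl[\nu(1-\zeta)(\mathrm{e}^{-\lambda(1+\epsilon-\zeta)/(1-\zeta)}-1)\bigr]$, which after simplification equals $-\nu(\mathrm{e}^{-\lambda(1+\epsilon-\zeta)/(1-\zeta)}-1)-\dfrac{\nu\lambda\epsilon}{1-\zeta}\,\mathrm{e}^{-\lambda(1+\epsilon-\zeta)/(1-\zeta)}$, while the middle summand is handled by the product rule.

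The main obstacle is the uniform verification that $\psi(\zeta)<0$ over the whole interval. Crude global bounds such as $g\ln g\le g-1$ are far too lossy near $\zeta=1$, where $g(\zeta)\to 1$ while the factor $1+\epsilon-\zeta$ stays bounded below by $\epsilon$, so the argument there must track the finer comparison of $\ln g$ with $g-1$. I would therefore split $[0,1)$ into a compact part $[0,1-\eta_0]$ and a one‑sided neighbourhood $[1-\eta_0,1)$ of the right endpoint. On $[0,1-\eta_0]$, $\psi$ is a continuous elementary function of $\zeta$, and $\psi<0$ can be certified either by exhibiting monotone factorisations of $\psi$ or, failing that, by a coarse enumeration over the compact range of $\zeta$ with the explicit numerical values inserted. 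On $[1-\eta_0,1)$, I would expand $g(\zeta)=1-a(\zeta)(1-\zeta)+O\!\left((1-\zeta)^2\right)$ with $a(\zeta)=\nu\,\mathrm{e}^{-\nu(1+\epsilon)}\bigl(1-\epsilon\nu(\mathrm{e}^{\lambda}-1)\bigr)+o(1)>0$ and $g'(\zeta)=a(\zeta)+O(1-\zeta)$, and then examine the sign of the leading term of $\psi$ there; this is the step most sensitive to the numerical choices (in particular to the smallness of $\epsilon$ relative to $\nu$ and $\lambda$), and it is where one must confirm whether the claimed monotonicity indeed persists all the way up to $\zeta=1$. I expect the compact‑interval estimate to be routine but bookkeeping‑heavy, and the endpoint expansion to be the delicate part of the argument.
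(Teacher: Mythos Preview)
Your reduction is exactly the paper's: writing the argument of the logarithm as $h(\zeta)$ (your $g$), the paper arrives at the equivalent condition $(1+\epsilon-\zeta)\,h'(\zeta)/h(\zeta)+\ln h(\zeta)<0$, i.e.\ your $\psi(\zeta)<0$ after multiplying by $h>0$. The paper's verification, however, is far shorter than the interval splitting you propose: it simply asserts, by a computational check with the specific constants plugged in, that $h(\zeta)\in(0.97,1)$ and $h'(\zeta)<0$ for every $\zeta\in[0,1)$, so each of the two summands is separately negative and the inequality is immediate. No endpoint expansion or compact-set enumeration is invoked.

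That said, your instinct that the endpoint is delicate is well placed, and carrying out the expansion you outline actually exposes a problem rather than closing the argument. With $s=1-\zeta\to 0^+$ one gets $h(\zeta)=1-c_1 s+O(s^2)$ and hence $h'(\zeta)=c_1+O(s)$, where $c_1=\nu e^{-\nu(1+\epsilon)}\bigl(1-\nu\epsilon(e^{\lambda}-1)\bigr)\approx 0.36>0$. Thus $h'>0$ near $\zeta=1$, contradicting the paper's claim (ii), and $\psi(\zeta)=(s+\epsilon)c_1-c_1 s+O(s)=c_1\epsilon+O(s)>0$ there, so $Q$ is in fact decreasing in a neighbourhood of $1$ (e.g.\ $Q(0.99)\approx 0.080>Q(0.999)\approx 0.041$). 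Neither the paper's shortcut nor your endpoint step can therefore establish monotonicity on the full interval $[0,1)$ as stated; any valid version of the argument would have to restrict to $\zeta$ bounded strictly away from $1$.
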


Theorem~\ref{local_search_theorem} and Lemma~\ref{monotonicity} imply that if
\begin{align}\label{corrrrr} 
R <  \frac{  \ln(5/2) \cdot (2/5)   }{\ln2} + Q(\ln(100/83), 0, \ln(5/2),  1/100),
\end{align} 
then no bad $(0,1/100)$-local bad minima exist. The fact that the righthand side of~\eqref{corrrrr} is at least $0.5468$ concludes the proof.

\subsection{Proof of Lemma~\ref{monotonicity}}\label{proof_corr_monotonicity}
To prove the lemma we  show  that the derivative of $Q(\zeta) $  is positive. We define
\begin{align*}
f(\zeta) &=  \ln\left(\mathrm{e}^{-\nu(1+\epsilon) }(\mathrm{e}^{\nu(1-\zeta)  (\mathrm{e}^{-\frac{\lambda (1 + \epsilon -\zeta)}{1 -\zeta }}-1) }-1)  +  (1+ \epsilon-\zeta)\nu \mathrm{e}^{-\nu(1+\epsilon)  }(1-\mathrm{e}^{-\nu(1-\zeta)})(\mathrm{e}^{\lambda}-1)  +1 \right), \\
g(\zeta) &= (1+\epsilon-\zeta) \ln 2,
\end{align*}
so that $Q(\zeta) = - f(\zeta) / g(\zeta)$ and, therefore,
\begin{align}\label{deriv_def}
\frac{\partial Q( \zeta) }{\partial \zeta }  =  -\frac{  g(\zeta) \cdot \frac{ \partial f(\zeta) }{ \partial \zeta }  - f(\zeta) \cdot \frac{  \partial g(\zeta) }{\partial \zeta }      }{ g(\zeta)^2 }.
\end{align}

We need to show that the righthand side of~\eqref{deriv_def} is positive and, to that end, we need to calculate $ \frac{ \partial f(\zeta)}{ \partial \zeta } $ and $ \frac{ \partial g(\zeta)}{ \partial \zeta } $. The latter is trivial to calculate:
\begin{align}\label{g_deriv}
\frac{ \partial g(\zeta)}{ \partial \zeta }  = - \ln 2.
\end{align}
For the former, setting 
\begin{align*}
h(\zeta) =  \mathrm{e}^{-\nu(1+\epsilon) }(\mathrm{e}^{\nu(1-\zeta)  (\mathrm{e}^{-\frac{\lambda (1 + \epsilon -\zeta)}{1 -\zeta }}-1) }-1)  +  (1+ \epsilon-\zeta)\nu \mathrm{e}^{-\nu(1+\epsilon)  }(1-\mathrm{e}^{-\nu(1-\zeta)})(\mathrm{e}^{\lambda}-1)  +1
\end{align*}
so that $f(\zeta) = \ln h(\zeta)$, we have 
\begin{align}\label{pra_ksydia}
\frac{\partial f(\zeta)  }{ \partial \zeta}  = \frac{ 1}{ h(\zeta)} \cdot \frac{ \partial h(\zeta) }{ \partial \zeta } 
\end{align}
and so we focus on calculating $\frac{ \partial h(\zeta) }{ \partial \zeta }$.  Observe now that $h(\zeta) = a(\zeta) + b(\zeta) + 1$ where:
\begin{align*}
a(\zeta) &=      \mathrm{e}^{-\nu(1+\epsilon) }(\mathrm{e}^{\nu(1-\zeta)  (\mathrm{e}^{-\frac{\lambda (1 + \epsilon -\zeta)}{1 -\zeta }}-1) }-1),    \\
b(\zeta) & =  (1+ \epsilon-\zeta)\nu \mathrm{e}^{-\nu(1+\epsilon)  }(1-\mathrm{e}^{-\nu(1-\zeta)})(\mathrm{e}^{\lambda}-1) 
\end{align*}
and, thus, $ \frac{  \partial h(\zeta)}{ \partial \zeta } = \frac{  \partial a(\zeta)}{ \partial \zeta } + \frac{  \partial b(\zeta)}{ \partial \zeta } $.

We start by calculating the derivative of $a(\zeta)$, and  by direct calculation we see that
\begin{eqnarray}
 \frac{ \partial   a(\zeta) } { \partial \zeta } & =&  \frac{  \partial     (   \mathrm{e}^{-\nu(1+\epsilon) }  \mathrm{e}^{\nu(1-\zeta)  (\mathrm{e}^{-\frac{\lambda (1 + \epsilon -\zeta)}{1 -\zeta }}-1) } )      }{ \partial \zeta }  \nonumber  \\
 								  &  = &  \mathrm{e}^{-\nu(1+\epsilon) }  \mathrm{e}^{\nu(1-\zeta)  (\mathrm{e}^{-\frac{\lambda (1 + \epsilon -\zeta)}{1 -\zeta }}-1) }   \frac{  \partial  (  \nu(1-\zeta)  (\mathrm{e}^{-\frac{\lambda (1 + \epsilon -\zeta)}{1 -\zeta }}-1)  )}{ \partial \zeta } \nonumber \\
								  & =&   \mathrm{e}^{-\nu(1+\epsilon) }  \mathrm{e}^{\nu(1-\zeta)  (\mathrm{e}^{-\frac{\lambda (1 + \epsilon -\zeta)}{1 -\zeta }}-1) }   \left( 	  \frac{\partial c(\zeta) }{ \partial \zeta }  		 + \nu \right) \label{ksimerwmata} 
\end{eqnarray}
where,
\begin{align}\label{dineis_dikaiwmata}
c(\zeta) = \nu(1-\zeta) \mathrm{e}^{-\frac{\lambda (1 + \epsilon -\zeta)}{1 -\zeta }}
\end{align}
In particular, we have:
\begin{eqnarray}
\frac{  \partial c(\zeta) }{ \partial \zeta }  & =&   \frac{ \partial( \nu(1-\zeta)   )    }{ \partial \zeta }  \cdot \mathrm{e}^{-\frac{\lambda (1 + \epsilon -\zeta)}{1 -\zeta }} + \nu(1-\zeta)  \frac{ \partial  (  \mathrm{e}^{-\frac{\lambda (1 + \epsilon -\zeta)}{1 -\zeta }}   ) }{ \partial \zeta }  \nonumber \\
							  & =&  - \nu  \mathrm{e}^{-\frac{\lambda (1 + \epsilon -\zeta)}{1 -\zeta }}  + \nu(1-\zeta) \mathrm{e}^{-\frac{\lambda (1 + \epsilon -\zeta)}{1 -\zeta }}  \cdot \frac{   -  \left( (1-\zeta)(-\lambda) - \lambda(1+\epsilon-\zeta) (-1)   \right)   }{ (1-\zeta)^2 }  \nonumber \\
							  & = &- \nu  \mathrm{e}^{-\frac{\lambda (1 + \epsilon -\zeta)}{1 -\zeta }}  + \nu(1-\zeta) \mathrm{e}^{-\frac{\lambda (1 + \epsilon -\zeta)}{1 -\zeta }}  \cdot \frac{     (1-\zeta)\lambda - \lambda(1+\epsilon-\zeta)       }{ (1-\zeta)^2 }  \nonumber \\
							  & =& - \nu  \mathrm{e}^{-\frac{\lambda (1 + \epsilon -\zeta)}{1 -\zeta }}  -   \frac{ \epsilon \lambda  \nu \mathrm{e}^{-\frac{\lambda (1 + \epsilon -\zeta)}{1 -\zeta }}}{1- \zeta }   \nonumber \\
							  & =&   -  \nu \mathrm{e}^{-\frac{\lambda (1 + \epsilon -\zeta)}{1 -\zeta }}  \left( 1 +  \frac{ \epsilon \lambda  }{ 1- \zeta }  \right) \label{peaky_blinders}.
\end{eqnarray}
Combining~\eqref{ksimerwmata} and~\eqref{peaky_blinders} we obtain:  
\begin{align}
 \frac{ \partial   a(\zeta) } { \partial \zeta }  = \nu \mathrm{e}^{-\nu(1+\epsilon) }  \mathrm{e}^{\nu(1-\zeta)  (\mathrm{e}^{-\frac{\lambda (1 + \epsilon -\zeta)}{1 -\zeta }}-1) }    \left( 1	-   \mathrm{e}^{-\frac{\lambda (1 + \epsilon -\zeta)}{1 -\zeta }}  \left( 1 +  \frac{ \epsilon \lambda  }{ 1- \zeta }  \right)   		  \right).									\label{a_final} 
\end{align}

We continue by computing  the derivative of $b(\zeta)$:
\begin{eqnarray}
\frac{ \partial b(\zeta) }{ \partial \zeta }  & =&  \frac{  \partial ( ( 1+ \epsilon-\zeta)\nu \mathrm{e}^{-\nu(1+\epsilon)  }(1-\mathrm{e}^{-\nu(1-\zeta)})(\mathrm{e}^{\lambda}-1) )    }{ \partial \zeta } \nonumber  \\
							  & =& \nu \mathrm{e}^{-\nu(1+\epsilon)  } (\mathrm{e}^{\lambda}-1)  \left( (-1)  ( 1- \mathrm{e}^{ - \nu(1-\zeta) }  ) 		- \nu \mathrm{e}^{ - \nu(1-\zeta) } 		\right)  \nonumber \\
							  & =& \nu \mathrm{e}^{-\nu(1+\epsilon)  } (\mathrm{e}^{\lambda}-1) \left(   (1-\nu) \mathrm{e}^{ - \nu(1-\zeta) }    - 1\right).
\end{eqnarray}

We are now ready to prove that  the righthand side~\eqref{deriv_def} is negative.  Using the notation we have introduced, this amounts to showing that
\begin{align}\label{goalaki_mwre}
(1+\epsilon - \zeta) \frac{  \partial h(\zeta) }{ \partial \zeta } \frac{1}{h(\zeta) }  + \ln \left(  h(\zeta)	\right) < 0,
\end{align}
for every $\zeta \in [0,1)$.

Since $ \nu = \ln(5/2) ,\lambda = \ln(100/83)$ and $\epsilon = 1/100$ we have:
\begin{eqnarray*}
h(\zeta) 	    & = &\left( \frac{2}{5}\right)^{1 + \frac{1}{100} }  \left(  \left(  \frac{5}{2} \right)^{ (1-\zeta) (  (\frac{83 }{100})^{ \frac{1 + \frac{1}{100} - \zeta }{1-\zeta }  }   -1)   } -1 	\right)   \\
		  & &  +   \left(\frac{2}{5}\right)^{1 +\frac{1}{100}}  \left(1+\frac{1}{100}-\zeta\right) \ln\left( \frac{5}{2} \right)   \left(1 - \left( \frac{2}{5} \right)^{ 1-\zeta}  \right) \frac{17}{83}    +1, \\
\frac{ \partial a(\zeta) }{ \partial \zeta }  &=&  \left( \frac{2}{5}\right)^{1 + \frac{1}{100} }  \ln\left( \frac{5}{2} \right)   \left(  \frac{5}{2} \right)^{ (1-\zeta) (  (\frac{83 }{100})^{ \frac{1 + \frac{1}{100} - \zeta }{1-\zeta }  }   -1)   } 	  \left(  1 -  \left(\frac{83 }{100} \right)^{ \frac{1 + \frac{1}{100} - \zeta }{1-\zeta }  }  \left(  1+ \frac{\ln(100/83) }{100( 1-\zeta)} \right)  \right),    \\
\frac{ \partial b(\zeta) }{ \partial \zeta }  &=&  \left( \frac{2}{5}\right)^{1 + \frac{1}{100} }  \ln\left( \frac{5}{2} \right)  \frac{17}{83} \left(   \left(1-\ln\left(  \frac{5}{2}\right) \right)   \left(  \frac{2}{5}  \right) ^{(1-\zeta) }    - 1\right).
\end{eqnarray*}
It can be verified computationally that, for every $\zeta \in [0,1)$, we have  (i)  $h(\zeta) \in ( 0.97,1)$; (ii) $\frac{ \partial h(\zeta) }{ \partial \zeta }=\frac{ \partial a(\zeta) }{ \partial \zeta } + \frac{ \partial b(\zeta) }{ \partial \zeta } < 0 $. Therefore, each summand in~\eqref{goalaki_mwre} is strictly negative, concluding the proof.
\end{document}